\providecommand{\tabularnewline}{\\}
\newcommand{\lyxaddress}[1]{
	\par {\raggedright #1
	\vspace{1.4em}
	\noindent\par}
}
\theoremstyle{plain}
\newtheorem{thm}{\protect\theoremname}
\theoremstyle{definition}
\newtheorem*{defn*}{\protect\definitionname}
\theoremstyle{plain}
\newtheorem*{fact*}{\protect\factname}
\theoremstyle{plain}
\newtheorem{prop}{\protect\propositionname}
\theoremstyle{remark}
\newtheorem{rem}{\protect\remarkname}
\theoremstyle{plain}
\newtheorem{lem}{\protect\lemmaname}
\theoremstyle{plain}
\newtheorem{cor}{\protect\corollaryname}
\theoremstyle{definition}
 \newtheorem{exam}{\protect\examplename}
\theoremstyle{definition}
\newtheorem*{example*}{\protect\examplename}
\newcommand{\R}{\mathbb{R}}
\newcommand{\Natu}{\mathbb{N}}
\newcommand{\im}{\operatorname{im}}
\newcommand{\diag}{\operatorname{diag}}
\newcommand{\dt}{{\rm d}}
\providecommand{\corollaryname}{Corollary}
\providecommand{\definitionname}{Definition}
\providecommand{\examplename}{Example}
\providecommand{\factname}{Fact}
\providecommand{\lemmaname}{Lemma}
\providecommand{\propositionname}{Proposition}
\providecommand{\remarkname}{Remark}
\providecommand{\theoremname}{Theorem}
\begin{document}
\title{On the sensitivity of implementations of a least-squares collocation
method for linear higher-index differential-algebraic equations}
\author{Michael Hanke}
\maketitle

\lyxaddress{Department of Mathematics, KTH Royal Institute of Technology, S-100
44 Stockholm, Sweden}
\begin{abstract}
The present paper continues our investigation of an implementation
of a least-squares collocation method for higher-index differential-algebraic
equations. In earlier papers, we were able to substantiate the choice
of basis functions and collocation points for a robust implementation
as well as algorithms for the solution of the discrete system. The
present paper is devoted to an analytic estimation of condition numbers
for different components of an implementation. We present error estimations,
which show the sources for the different errors. 
\end{abstract}
\begin{keywords}
Least-squares collocation; higher index differential-algebraic equations;
ill-posed problem
\end{keywords}

\section{Introduction}

In a series of papers\cite{HMTWW,HMT,HM,HM1}, we were developing
a new method for solving higher-index differential algebraic equations
(DAEs). In naturally given functional analytic settings, higher index
DAEs give rise to ill-posed problems \cite[Section 3.9]{Hanke1989,CRR}.
Motivated by the well-known method of least-squares, or discretization
on preimage space, for the approximation of ill-posed problems \cite{KaltOffter},
this approach has been adapted to the case of higher-index DAEs. In
particular, the ansatz spaces for the discrete least-squares problem
have been chosen to be piecewise polynomials. Additionally, the integrals
have been replaced by discrete versions based on simplified integration
rules, in the most simple approach by a version resembling well-known
collocation methods for solving boundary value problems for systems
of ordinary differential equations (ODEs). The latter, extremely simplified
version of the approach proposed in \cite{KaltOffter}, has been motivated
by the success of collocations methods for ODEs. This connection led
us to coin the notion \emph{least-squares collocation method} and
calling the integration nodes also as \emph{collocation points}.

For our method, a number of convergence results both for linear and
nonlinear DAEs have been proven. Even our first attempts showed surprisingly
accurate results when applying the method to some linear examples
\cite{HMTWW}. More recently, we investigated the algorithmic ingredients
of the method in more detail \cite{HM_Part1,HM_Part2}. Not surprisingly,
the basis representation and the choice of the integration nodes showed
an important influence on the accuracy of the method.

The present note is intended to further quantify the conditioning
of the individual ingredients of the implementation of the proposed
method and to better understand the (high) accuracy of the computational
results obtained so far. Taking the ill-posedness of higher-index
DAEs into account we expect very sensitive discrete problems for sufficiently
fine discretizations.

The practical implementation of a projection method consists of two
steps for a given approximation space $X_{\pi}$: Choice of a basis
and formulation and solution of the arising discrete system by a suitable
method. This in turn gives rise to two different operators, the first
being the \emph{representation map} connecting the elements of $x\in X_{\pi}$
with their vector of coefficients with respect to the chosen basis.
The other operator is the discrete version of the the least-squares
collocation method that becomes a linearly equality constrained linear
least-squares problem in our case. Both operators are investigated
in detail both analytically and numerically.

In particular, qualitative and quantitative estimations for the condition
numbers and norms of the representation map are proven for bases whose
usefulness in the present applications has been established earlier
\cite{HM_Part1,HM_Part2}.

For the constrained linear least-squares problem, a number of perturbation
results are well-known, e.g., \cite{We73,Elden80,CoxHig99}. However,
in the present application, the constraints play a special role: In
the usual choices of the basis functions, some coefficient vectors
do not represent a function in the approximation space. A coefficient
vector represents a function in the approximation space if and only
if the constraints are fulfilled. Therefore, a new error estimation
is derived, which takes care of the exceptional role of the constraints.
The important ingredients in this estimate are the condition number
of the constraints and a restricted condition number for the least-squares
functional. For the former, a complete analytical characterization
for the chosen bases is provided. In a number of numerical examples,
values for the restricted condition number are presented.

Based on the properties of the constraints, a new projection procedure
of inconsistent approximations onto the approximation space is proposed,
which is simple and easily implementable.

In Section 2, the least-squares method for approximating linear DAEs
is introduced and the representation map is constructed. Section 3
is devoted to an in-depth investigation of the representation map.
Then we derive a perturbation result for constrained linear least-squares
problem in Section 4. Numerical examples for the condition numbers
of the different ingredients are given in Section 5. Section 6 contains
some conclusions. The algorithm for projecting inconsistent coefficient
vectors is derived in Appendix~\ref{sec:Projections}.

\section{The problem setting}

\subsection{The discrete functional}

In this section, we repeat the problem setting from \cite{HM_Part1}
for the readers convenience. Consider a linear boundary-value problem
for a DAE with properly involved derivative, 
\begin{align}
A(t)(Dx)'(t)+B(t)x(t) & =q(t),\quad t\in[a,b],\label{eq:DAE}\\
G_{a}x(a)+G_{b}x(b) & =d.\label{eq:BC}
\end{align}
with $[a,b]\subset\R$ being a compact interval, $D=[I\;0]\in\R^{k\times m}$,
$k<m$, with the identity matrix $I\in\R^{k\times k}$. Furthermore,
$A(t)\in\R^{m\times k}$, $B(t)\in\R^{m\times m}$, and $q(t)\in\R^{m}$
are assumed to be sufficiently smooth with respect to $t\in[a,b]$.
Moreover, $G_{a},G_{b}\in\R^{l_{\textrm{dyn}}\times m}$. Thereby,
$l_{\textrm{dyn}}$ is the dynamical degree of freedom of the DAE,
that is, the number of free parameters that can be fixed by initial
and boundary conditions. We assume further that $\ker D\subseteq\ker G_{a}$
and $\ker D\subseteq\ker G_{b}$.

Unlike regular ODEs where $l_{\textrm{dyn}}=k=m$, for DAEs it holds
that $0\leq l_{\textrm{dyn}}\leq k<m$, in particular, $l_{\textrm{dyn}}=k$
for index-one DAEs, $l_{\textrm{dyn}}<k$ for higher-index DAEs, and
$l_{\textrm{dyn}}=0$ can certainly happen.

The appropriate space for looking for solutions of (\ref{eq:DAE})-(\ref{eq:BC})
is (cf \cite{HMT})
\[
H_{D}^{1}(a,b):=\{x\in L^{2}((a,b),\R^{m}:Dx\in H^{1}((a,b),\R^{m}\}.
\]

Let $\mathfrak{P}_{K}$ denote the set of all polynomials of degree
less than or equal to $K\geq0$. Given the partition $\pi$, 
\begin{equation}
\pi:\quad a=t_{0}<t_{1}<\cdots<t_{n}=b,\label{eq:mesh}
\end{equation}
with the stepsizes $h_{j}=t_{j}-t_{j-1}$, $h=\max_{1\leq j\leq n}h_{j}$,
and $h_{\min}=\min_{1\leq j\leq n}h_{j}$. Let $C_{\pi}([a,b],\R^{m})$
denote the space of piecewise continuous functions having breakpoints
merely at the meshpoints of the partition $\pi$. Let $N\geq1$ be
a fixed integer. We are looking for an approximate solution of our
boundary value problem from the ansatz space $X_{\pi}\subset H_{D}^{1}(a,b)$,
\begin{align}
X_{\pi} & =\{x\in C_{\pi}([a,b],\R^{m}):Dx\in C([a,b],\R^{k}),\nonumber \\
 & x_{\kappa}\lvert_{[t_{j-1},t_{j})}\in\mathfrak{P}_{N},\,\kappa=1,\ldots,k,\quad x_{\kappa}\lvert_{[t_{j-1},t_{j})}\in\mathfrak{P}_{N-1},\,\kappa=k+1,\ldots,m,\;j=1,\ldots,n\}.\label{eq:Xn}
\end{align}
The continuous version of the least-squares method reads: Find an
$x_{\pi}\in X_{\pi}$ that minimizes the functional 
\begin{equation}
\Phi(x)=\int_{a}^{b}|A(t)(Dx)'(t)+B(t)x(t)-q(t)|^{2}\dt t+|G_{a}x(a)+G_{b}x(b)-d|^{2}.\label{eq:Phi}
\end{equation}
Here and in the following, $|\cdot|$ denotes the Euclidean norm in
the corresponding spaces $\R^{\alpha}$ for the appropriate $\alpha$.
Let $\langle\cdot,\cdot\rangle$ denote the scalar product in $\R^{\alpha}$.

The functional values $\Phi(x)$, which are needed when minimizing
for $x\in X_{\pi}$, cannot be evaluated exactly and the integral
must be discretized accordingly. Taking into account that the boundary-value
problem is ill-posed in the higher index case, perturbations of the
functional may have a serious influence on the error of the approximate
least-squares solution or even prevent convergence towards the exact
solution. Therefore, careful approximations of the integral in $\Phi$
are required. We take over the options provided in \cite{HM_Part1},
in which $M\geq N+1$ so-called collocation points 
\begin{equation}
0\leq\rho_{1}<\cdots<\rho_{M}\leq1.\label{eq:nodes}
\end{equation}
are used, and further, on the subintervals of the partition $\pi$,
\[
t_{ji}=t_{j-1}+\rho_{i}h_{j},\quad i=1,\ldots,M,\;j=1,\ldots,n.
\]
Introducing, for each $x\in X_{\pi}$ and $w(t)=A(t)(Dx)'(t)+B(t)x(t)-q(t)$,
the corresponding vector $W\in\R^{mMn}$ by 
\begin{equation}
W=\left[\begin{array}{c}
W_{1}\\
\vdots\\
W_{n}
\end{array}\right]\in\R^{mMn},\quad W_{j}=h_{j}^{1/2}\left[\begin{array}{c}
w(t_{j1})\\
\vdots\\
w(t_{jM})
\end{array}\right]\in\R^{mM},\label{eq:W}
\end{equation}
we turn to an approximate functional of the form 
\begin{align}
\Phi_{\pi,M}(x)=W^{T}\mathcal{L}W+|G_{a}x(a)+G_{b}x(b)-d|^{2},\quad x\in X_{\pi},\label{eq:PhiM}
\end{align}
with a positive definite symmetric matrix\footnote{$\otimes$ denotes the Kronecker product.}
\begin{align}
\mathcal{L}=\diag(L\otimes I_{m},\ldots,L\otimes I_{m}).\label{eq:L}
\end{align}
As detailed in \cite{HM_Part1}, we have different options for the
positive definite symmetric matrix $L\in\R^{M\times M}$, namely 
\begin{align}
L & =L^{C}=M^{-1}I_{M},\label{LC}\\
L & =L^{I}=\diag(\gamma_{1},\ldots,\gamma_{M}),\label{LI}\\
L & =L^{R}=(V^{-1})^{T}V^{-1},\label{LR}
\end{align}
see \cite[Section 3]{HM_Part1} for details concerning the selection
of the quadrature weights $\gamma_{1},\ldots,\gamma_{M}$ and the
construction of the mass matrix $V$. We emphasize that the matrices
$L^{C},L^{I},L^{R}$ depend only on $M$, the node sequence (\ref{eq:nodes}),
and the quadrature weights, but do not depend on the partition $\pi$
and its stepsizes at all.

In the context of the numerical experiments below, we denote each
of the different versions of the functional by $\Phi_{\pi,M}^{C}$,
$\Phi_{\pi,M}^{I}$, and $\Phi_{\pi,M}^{R}$, respectively. The following
convergence result is known \cite[Theorem 2]{HM_Part1}:
\begin{thm}
\label{thm:Convergence}Let the DAE (\ref{eq:DAE}) be regular with
index $\mu\in\Natu$ and let the boundary condition (\ref{eq:BC})
be accurately stated. Let $x_{*}$ be a solution of the boundary value
problem (\ref{eq:DAE})--(\ref{eq:BC}), and let $A,B,q$ and also
$x_{*}$ be sufficiently smooth.

Let all partitions $\pi$ be such that $h/h_{min}\leq\rho$, with
a global constant $\rho$. Then, with 
\[
M\geq N+\mu,
\]
the following statements are true: 
\begin{description}
\item [{\rm (1)}] For sufficient fine partitions $\pi$ and each sequence
of arbitrarily placed nodes (\ref{eq:nodes}), there exists exactly
one $x_{\pi}^{R}\in X_{\pi}$ minimizing the functional $\Phi_{\pi,M}^{R}$
on $X_{\pi}$, and 
\begin{align*}
\|x_{\pi}^{R}-x_{\ast}\|_{H_{D}^{1}(a,b)}\leq C_{R}h^{N-\mu+1}.
\end{align*}
\item [{\rm (2)}] For each integration rule related to the interval $[0,1]$,
with $M$ nodes (\ref{eq:nodes}) and positive weights $\gamma_{1},\ldots,\gamma_{M}$,
that is exact for polynomials with degree less than or equal to $2M-2$,
and sufficient fine partitions $\pi$, there exists exactly one $x_{\pi}^{I}\in X_{\pi}$
minimizing the functional $\Phi_{\pi,M}^{I}$ on $X_{\pi}$, and $x_{\pi}^{I}=x_{\pi}^{R}$,
thus 
\begin{align*}
\|x_{\pi}^{I}-x_{\ast}\|_{H_{D}^{1}(a,b)}\leq C_{R}h^{N-\mu+1}.
\end{align*}
\end{description}
\end{thm}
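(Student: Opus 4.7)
The plan is to build on the standard framework for least-squares discretization of ill-posed problems, tailored to the index structure of the DAE. The cornerstone is an analytic identification of $W^{T}\mathcal{L}W$ when $L=L^{R}$: by construction of the mass matrix $V$, one has $W^{T}\mathcal{L}W=\int_{a}^{b}|\tilde{w}(t)|^{2}\,\dt t$, where $\tilde{w}$ is the piecewise polynomial of degree at most $M-1$ that interpolates the residual $w(t)=A(t)(Dx)'(t)+B(t)x(t)-q(t)$ at the $nM$ collocation points. Thus $\Phi_{\pi,M}^{R}(x)$ is the exact $L^{2}$-norm of the interpolant of the residual plus the squared boundary defect, which gives the functional a clean analytic meaning and explains why the nodes (\ref{eq:nodes}) may be placed arbitrarily.

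Existence and uniqueness of $x_{\pi}^{R}$ follow once the restriction of $\Phi_{\pi,M}^{R}$ to $X_{\pi}$ is shown to be positive definite for sufficiently small $h$. Setting $q\equiv0$ and $d=0$, if $\Phi_{\pi,M}^{R}(x)=0$ then $\tilde{w}\equiv0$, hence $A(Dx)'+Bx$ vanishes at every collocation point and $G_{a}x(a)+G_{b}x(b)=0$. Because $M\geq N+\mu$, a discrete counterpart of the index-$\mu$ regularity of the DAE operator should then force $x\equiv0$. I would phrase this as a uniform discrete stability estimate of the form
\[
\|x\|_{H_{D}^{1}(a,b)}\leq C\,h^{-\mu}\,\bigl(\Phi_{\pi,M}^{R}(x)\bigr)^{1/2},\quad x\in X_{\pi},
\]
with $C$ independent of $\pi$. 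This inf-sup-type estimate is the principal obstacle of the whole proof: it must absorb the ill-posedness of the continuous operator, and it is the sole place where the conditions $M\geq N+\mu$ and $h/h_{\min}\leq\rho$ are genuinely used.

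From here the error bound follows by a quasi-optimality argument. Choosing a piecewise-polynomial interpolant $\hat{x}\in X_{\pi}$ of $x_{*}$, classical approximation theory yields $\|x_{*}-\hat{x}\|_{H_{D}^{1}}=O(h^{N})$ together with $\Phi_{\pi,M}^{R}(\hat{x})^{1/2}=O(h^{N+1})$, the extra order coming from the fact that $\hat{x}$ can be arranged to match $x_{*}$ sharply at the $M\geq N+\mu$ collocation nodes on each subinterval. Optimality of $x_{\pi}^{R}$ gives $\Phi_{\pi,M}^{R}(x_{\pi}^{R})\leq\Phi_{\pi,M}^{R}(\hat{x})$, and inserting the difference $x_{\pi}^{R}-\hat{x}\in X_{\pi}$ into the discrete stability bound together with the triangle inequality yields $\|x_{\pi}^{R}-x_{*}\|_{H_{D}^{1}}\leq C_{R}h^{N-\mu+1}$.

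Claim (2) is almost immediate from the identification in the first paragraph. On $X_{\pi}$ the restriction of $\tilde{w}_{j}$ to the $j$-th subinterval is a polynomial of degree at most $M-1$ in the local variable $\tau$, so $|\tilde{w}_{j}(\tau)|^{2}$ has degree at most $2M-2$. A quadrature rule exact on this space therefore satisfies $\sum_{i=1}^{M}\gamma_{i}|w(t_{ji})|^{2}=\sum_{i=1}^{M}\gamma_{i}|\tilde{w}_{j}(\rho_{i})|^{2}=\int_{0}^{1}|\tilde{w}_{j}(\tau)|^{2}\,\dt\tau$, and summing over $j$ gives $\Phi_{\pi,M}^{I}(x)=\Phi_{\pi,M}^{R}(x)$ for every $x\in X_{\pi}$. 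The two minimization problems coincide on $X_{\pi}$, so $x_{\pi}^{I}=x_{\pi}^{R}$ and the error estimate of part (1) transfers verbatim.
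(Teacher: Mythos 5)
This theorem is not proved in the present paper at all: it is quoted verbatim from \cite[Theorem 2]{HM_Part1}, so there is no in-paper proof to compare against. Judged on its own merits, your proposal has the right overall skeleton (identify $\Phi_{\pi,M}^{R}$ as the exact $L^{2}$-norm of the piecewise interpolant of the residual, then run a discrete-stability-plus-consistency argument, and reduce part (2) to part (1) by quadrature exactness on $\mathfrak{P}_{2M-2}$; the last step is indeed how $x_{\pi}^{I}=x_{\pi}^{R}$ is obtained in the cited source). But it contains a genuine gap and a concrete error.

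The gap: the entire difficulty of the theorem is concentrated in the uniform discrete stability estimate that you write down and then explicitly decline to prove. Saying that ``a discrete counterpart of the index-$\mu$ regularity should then force $x\equiv0$'' is not an argument; establishing an inequality of the form $\|x\|_{H_{D}^{1}}\leq C\,h^{1-\mu}\bigl(\Phi_{\pi,M}^{R}(x)\bigr)^{1/2}$ on $X_{\pi}$ requires the structural (projector-based) decoupling of a regular index-$\mu$ DAE into its inherent ODE and the chain of $\mu-1$ differentiations, and it is the only place where $M\geq N+\mu$, the quasi-uniformity $h/h_{\min}\leq\rho$, and the accurately stated boundary conditions actually enter. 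Without it, neither existence/uniqueness of $x_{\pi}^{R}$ nor the rate $h^{N-\mu+1}$ follows. The error: your consistency claim $\Phi_{\pi,M}^{R}(\hat{x})^{1/2}=O(h^{N+1})$, justified by ``arranging $\hat{x}$ to match $x_{*}$ sharply at the $M\geq N+\mu$ collocation nodes,'' cannot be right for $\mu\geq2$ (the case of interest): the components of $\hat{x}$ are polynomials of degree at most $N$ on each subinterval and cannot interpolate at $N+\mu>N+1$ points, and in any case the residual involves $(D(\hat{x}-x_{*}))'$, which for a degree-$N$ interpolant is generically only $O(h^{N})$. The correct bookkeeping pairs $O(h^{N})$ consistency with an $O(h^{1-\mu})$ stability constant (not your $O(h^{-\mu})$) to reach $h^{N-\mu+1}$; as written, your two exponents are each off by one and happen to cancel to the right answer.
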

A corresponding result for $\Phi_{\pi,M}^{C}$ is not known. Numerical
tests showed excellent convergence results even for cases not covered
by Theorem~\ref{thm:Convergence}. This holds in particular for any
$M\geq N+1$ tested in all three cases of the functional $\Phi_{\pi,M}$.
Thus, $M=N+1$ seems to be the preferable choice.

\subsection{A basis representation of $\Phi_{\pi,M}$}

By choosing an appropriate basis for $X_{\pi}$, the minimization
of the functional (\ref{eq:PhiM}) will be reduced to a minimization
problem for the coefficients of the elements $x\in X_{\pi}$. For
the subsequent considerations, it is appropriate to introduce the
space
\begin{align}
\tilde{X}_{\pi} & =\{x\in C_{\pi}([a,b],\R^{m}):\nonumber \\
 & x_{\kappa}\lvert_{[t_{j-1},t_{j})}\in\mathfrak{P}_{N},\,\kappa=1,\ldots,k,\quad x_{\kappa}\lvert_{[t_{j-1},t_{j})}\in\mathfrak{P}_{N-1},\,\kappa=k+1,\ldots,m,\;j=1,\ldots,n\}.\label{eq:Xn-1-1}
\end{align}
In particular, the elements $x$ of $\tilde{X}_{\pi}$ are no longer
required to have continuous components $Dx$. Obviously, it holds
$X_{\pi}\subseteq\tilde{X}_{\pi}$. In general, $\tilde{X}_{\pi}$
is not a subspace of $H_{D}^{1}(a,b)$. However, it holds
\begin{align*}
X_{\pi} & =\{x\in\tilde{X}_{\pi}:x_{\kappa}\in C[a,b],\quad\kappa=1,\ldots,k\}\\
 & =\tilde{X}_{\pi}\cap H_{D}^{1}(a,b).
\end{align*}
Based on the analysis in \cite[Section 4]{HM_Part1} we provide a
basis of the ansatz space $\tilde{X}_{\pi}$ to begin with. Assume
that $\{p_{0},\ldots,p_{N-1}\}$ is a basis of $\mathfrak{P}_{N-1}$
defined on the reference interval $[0,1]$. Then, $\{\bar{p}_{0},\ldots,\bar{p}_{N}\}$
given by 
\begin{equation}
\bar{p}_{i}(\tau)=\begin{cases}
1, & i=0,\\
\int_{0}^{\tau}p_{i-1}(\sigma){\rm d}\sigma, & i=1,\ldots,N,\quad\tau\in[0,1],
\end{cases}\label{eq:diffBasis}
\end{equation}
form a basis of $\mathfrak{P}_{N}$. The transformation to the interval
$(t_{j-1},t_{j})$ of the partition $\pi$ (\ref{eq:mesh}) yields
\begin{align}
p_{ji}(t)=p_{i}((t-t_{j-1})/h_{j}),\quad\bar{p}_{ji}(t)=h_{j}\bar{p}_{i}((t-t_{j-1})/h_{j}).\label{eq:scaledp}
\end{align}
and in particular 
\begin{align*}
\bar{p}_{ji}(t_{j-1}) & =h_{j}\bar{p}_{i}(0)=h_{j}\begin{cases}
1, & i=0,\\
0, & i=1,\ldots,N,
\end{cases}\\
\bar{p}_{ji}(t_{j}) & =h_{j}\bar{p}_{i}(1)=h_{j}\begin{cases}
1, & i=0,\\
\int_{0}^{1}p_{i-1}(\sigma){\rm d}\sigma, & i=1,\ldots,N.
\end{cases}
\end{align*}
Next we form the matrix functions 
\begin{align*}
\bar{\mathcal{P}}_{j}=\begin{bmatrix}\bar{p}_{j0} & \ldots & \bar{p}_{jN}\end{bmatrix}:[t_{j-1},t_{j}]\rightarrow\R^{1\times(N+1)},\quad\mathcal{P}_{j}=\begin{bmatrix}p_{j0} & \ldots & p_{j,N-1}\end{bmatrix}:[t_{j-1},t_{j}]\rightarrow\R^{1\times N},
\end{align*}
such that 
\begin{align}
\bar{\mathcal{P}}_{j}(t_{j-1}) & =h_{j}\begin{bmatrix}1 & 0 & \ldots & 0\end{bmatrix},\quad j=1,\ldots,n,\label{eq:barPj}\\
\bar{\mathcal{P}}_{j}(t_{j}) & =h_{j}\begin{bmatrix}1 & \int_{0}^{1}p_{0}(\sigma){\rm d}\sigma & \ldots & \int_{0}^{1}p_{N-1}(\sigma){\rm d}\sigma\end{bmatrix},\quad j=1,\ldots,n.\label{eq:barPj2}
\end{align}
Following the discussions in \cite{HM_Part1}, the following bases
are suitable in applications:
\begin{description}
\item [{Legendre~basis}] Let $P_{i}$ denote the Legendre polynomials.
Then, $p_{i}$ is chosen to be the shifted Legendre polynomial, that
is
\[
p_{i}(\tau)=P_{i}(2\tau-1),\quad i=0,1,\ldots.
\]
 
\item [{Modified~Legendre~basis}] In this case, we set
\[
\bar{p}_{0}(\tau)=1,\quad\bar{p}_{i}(\tau)=P_{i}(2\tau-1)-(-1)^{i},\quad i=1,2,\ldots,
\]
such that $p_{i}=\bar{p}_{i+1}'$, $i=0,1,\ldots$. This basis has
not been considered in \cite{HM_Part1}, but later experiments indicated
its usefulness. This is supported by considerations later below.
\item [{Chebyshev~basis}] Let $T_{i}$ denote the Chebyshev polynomials
of the first kind. Then we define
\[
p_{i}(\tau)=T_{i}(2\tau-1),\quad i=0,1,\ldots.
\]
\item [{Runge-Kutta~basis}] Let $0<\tau_{1}<\cdots<\tau_{N}<1$ be interpolation
nodes. Then we set
\begin{equation}
p_{i}(\tau)=\frac{\prod_{\kappa\neq i+1}(\tau-\tau_{\kappa})}{\prod_{\kappa\neq i+1}(\tau_{i+1}-\tau_{\kappa})}.\label{eq:ChebNodes}
\end{equation}
The latter are the usual Lagrange interpolation polynomials. In the
implementation, it is advantageous to represent these polynomials
in terms of Chebyshev polynomials \cite{HM_Part1}. Of particular
use is the Runge-Kutta basis if the shifted Chebyshev nodes $\tau_{\kappa}=\frac{1}{2}\left(1+\cos\left(\frac{2\kappa-1}{2N}\pi\right)\right)$
are chosen as interpolation nodes.
\end{description}
\medskip{}

For $x\in\tilde{X}_{\pi}$ we use the denotations 
\begin{align*}
x(t)=x_{j}(t)=\begin{bmatrix}x_{j1}(t)\\
\vdots\\
x_{jm}(t)
\end{bmatrix}\in\R^{m},\quad Dx_{j}(t)=\begin{bmatrix}x_{j1}(t)\\
\vdots\\
x_{jk}(t)
\end{bmatrix}\in\R^{k},\quad t\in[t_{j-1},t_{j}).
\end{align*}
Then, we develop each $x_{j}$ componentwise 
\begin{align}
\begin{aligned}x_{j\kappa}(t) & =\sum_{l=0}^{N}c_{j\kappa l}\bar{p}_{jl}(t)=\bar{\mathcal{P}}_{j}(t)c_{j\kappa},\quad\kappa=1,\ldots,k,\\
x_{j\kappa}(t) & =\sum_{l=0}^{N-1}c_{j\kappa l}p_{jl}(t)=\mathcal{P}_{j}(t)c_{j\kappa},\quad\kappa=k+1,\ldots,m.
\end{aligned}
\label{eq:reprx}
\end{align}
with 
\begin{align*}
c_{j\kappa}=\begin{bmatrix}c_{j\kappa0}\\
\vdots\\
c_{j\kappa N}
\end{bmatrix}\in\R^{N+1},\quad\kappa=1,\ldots,k,\quad c_{j\kappa}=\begin{bmatrix}c_{j\kappa0}\\
\vdots\\
c_{j\kappa,N-1}
\end{bmatrix}\in\R^{N},\quad\kappa=k+1,\ldots,m.
\end{align*}
Introducing still 
\begin{align*}
\Omega_{j}(t)=\left[\begin{array}{cc}
I_{k}\otimes\bar{\mathcal{P}}_{j}(t) & \mathcal{O}_{1}\\
\mathcal{O}_{2} & I_{m-k}\otimes\mathcal{P}_{j}(t)
\end{array}\right]\in\R^{m\times(mN+k)},\quad c_{j}=\begin{bmatrix}c_{j1}\\
\vdots\\
c_{jm}
\end{bmatrix}\in\R^{mN+k},
\end{align*}
with $\mathcal{O}_{1}\in\R^{k\times kN}$ and $\mathcal{O}_{2}\in\R^{(m-k)\times(m-k)(N+1)}$
being matrices having only zero entries we represent, for $t\in I_{j}$,
$j=1,\ldots,n$, 
\begin{align}
x_{j}(t) & =\Omega_{j}(t)c_{j},\label{eq:xj}\\
Dx_{j}(t) & =D\Omega_{j}(t)c_{j}=\begin{bmatrix}I_{k}\otimes\bar{\mathcal{P}}_{j}'(t) & \;\mathcal{O}_{1}\end{bmatrix}c_{j}.\label{eq:Dxj}
\end{align}
Now we collect all coefficients $c_{j\kappa\,l}$ in the vector $c$,
\begin{align*}
c=\begin{bmatrix}c_{1}\\
\vdots\\
c_{n}
\end{bmatrix}\in\R^{n(mN+k)}.
\end{align*}

\begin{defn*}
The mapping $\mathcal{R}:\R^{n(mN+k)}\rightarrow\tilde{X}_{\pi}$
given by (\ref{eq:xj}) is called the \emph{representation map} of
$\tilde{X}_{\pi}$ with respect to the basis (\ref{eq:scaledp}).
\end{defn*}
\begin{fact*}
We observe that each $x\in\tilde{X}_{\pi}$ has a representation of
the kind (\ref{eq:xj}) and each function of the form (\ref{eq:xj})
is an element of $\tilde{X}_{\pi}$. Since $\dim\tilde{X}_{\pi}=n(mN+k)$,
$\mathcal{R}$ is a bijective mapping.
\end{fact*}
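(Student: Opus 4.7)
The plan is to split the statement into three ingredients: (i) the range of $\mathcal{R}$ lies in $\tilde X_\pi$, (ii) every element of $\tilde X_\pi$ arises as $\mathcal{R}(c)$ for some $c$, and (iii) a dimension count to upgrade surjectivity to bijectivity. I expect no serious obstacle here, since the bases in (\ref{eq:diffBasis}) and (\ref{eq:scaledp}) were constructed precisely so that the componentwise expansions in (\ref{eq:reprx}) work, and the \emph{relaxed} space $\tilde X_\pi$ imposes no inter-subinterval coupling at all.

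For (i), I would take an arbitrary $c\in\R^{n(mN+k)}$ and inspect the $j$-th block. Formula (\ref{eq:xj}) builds $x_j=\Omega_j c_j$ with the first $k$ entries of the form $\bar{\mathcal{P}}_j(t)c_{j\kappa}$ and the remaining $m-k$ entries of the form $\mathcal{P}_j(t)c_{j\kappa}$. Because $\{\bar p_{j0},\ldots,\bar p_{jN}\}$ spans $\mathfrak{P}_N$ on $[t_{j-1},t_j)$ and $\{p_{j0},\ldots,p_{j,N-1}\}$ spans $\mathfrak{P}_{N-1}$, these components have exactly the polynomial degrees required by (\ref{eq:Xn-1-1}). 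Piecewise continuity on $\pi$ is immediate because the function is defined piecewise and $\tilde X_\pi$ demands no matching across mesh points. Hence $\mathcal{R}(c)\in\tilde X_\pi$.

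For (ii), I would reverse the construction: given $x\in\tilde X_\pi$, each restriction $x_{j\kappa}\vert_{[t_{j-1},t_j)}$ lies in $\mathfrak{P}_N$ (for $\kappa\le k$) or $\mathfrak{P}_{N-1}$ (for $\kappa>k$), and therefore admits a \emph{unique} expansion in the corresponding basis, which defines the vectors $c_{j\kappa}$ and hence $c$. Uniqueness of the expansion simultaneously delivers injectivity, so $\mathcal{R}$ is a linear bijection between $\R^{n(mN+k)}$ and $\tilde X_\pi$. Alternatively, one can derive bijectivity from step (i) plus a dimension count: on each subinterval there are $k(N+1)+(m-k)N=mN+k$ free coefficients, and summed over the $n$ subintervals this gives $\dim\tilde X_\pi=n(mN+k)$, matching the domain dimension.

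The only genuine loose end to verify — essentially a sanity check — is that $\{\bar p_0,\ldots,\bar p_N\}$ from (\ref{eq:diffBasis}) really is a basis of $\mathfrak{P}_N$. Here $\bar p_0=1$ supplies the constant, while $\bar p_1,\ldots,\bar p_N$ are antiderivatives of a basis of $\mathfrak{P}_{N-1}$ vanishing at $0$; applying differentiation to any hypothetical linear dependence $\sum_i \alpha_i \bar p_i \equiv 0$ forces $\alpha_1=\cdots=\alpha_N=0$ by independence of $\{p_0,\ldots,p_{N-1}\}$, and then $\alpha_0=0$ follows from evaluation at $\tau=0$. Dimension agreement ($N+1$ functions in an $(N+1)$-dimensional space) finishes this auxiliary claim.
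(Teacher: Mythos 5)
Your proposal is correct and follows exactly the route the paper intends: it verifies that (\ref{eq:xj}) maps into $\tilde X_{\pi}$, that every element of $\tilde X_{\pi}$ has a unique coefficient vector via the basis expansions (\ref{eq:reprx}), and then closes with the dimension count $k(N+1)+(m-k)N=mN+k$ per subinterval. The auxiliary check that $\{\bar p_{0},\ldots,\bar p_{N}\}$ from (\ref{eq:diffBasis}) is a basis of $\mathfrak{P}_{N}$ is a sensible addition that the paper leaves implicit.
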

\medskip{}

Consider an element $x\in\tilde{X}_{\pi}$ with its representation
(\ref{eq:xj}). This element belongs to $X_{\pi}$ if and only if
its first $k$ components are continuous. Using the representation
(\ref{eq:reprx}) we see that $x\in X_{\pi}$ if and only if
\begin{equation}
\mathcal{C}c=0.\label{eq:discconstraint}
\end{equation}
where $\mathcal{C}\in\R^{k(n-1)\times n(mN+k)}$ and
\[
\mathcal{C}=\begin{bmatrix}I_{k}\otimes\bar{\mathcal{P}}_{1}(t_{1}) & \mathcal{O}_{1} & -I_{k}\otimes\bar{\mathcal{P}}{}_{2}(t_{1}) & \mathcal{O}_{1}\\
 &  & I_{k}\otimes\bar{\mathcal{P}}_{2}(t_{2}) & \mathcal{O}_{1} & -I_{k}\otimes\bar{\mathcal{P}}_{3}(t_{2}) & \mathcal{O}_{1}\\
 &  &  & \ddots &  & \ddots\\
\\
 &  &  &  & I_{k}\otimes\bar{\mathcal{P}}_{n-1}(t_{n-1}) & \mathcal{O}_{1} & -I_{k}\otimes\bar{\mathcal{P}}_{n}(t_{n-1}) & \mathcal{O}_{1}
\end{bmatrix}.
\]
Owing to the construction, $\mathcal{C}$ has full row rank, cf. (\ref{eq:barPj}),
(\ref{eq:barPj2}).
\begin{fact*}
Define $\tilde{\mathcal{R}}=\left.\mathcal{R}\right|_{\ker\mathcal{C}}$
be the restriction of the representation map $\mathcal{R}$ onto the
kernel $\ker\mathcal{C}$ of $\mathcal{C}$. Since $\mathcal{C}$
has full row rank, $\dim\ker\mathcal{C}=n(mN+k)-k(n-1)=nmN+k=\dim X_{\pi}$,
and $\mathcal{R}$ is injective, $\tilde{\mathcal{R}}$ is bijective.
In particular, it holds also $\tilde{\mathcal{R}}^{-1}=\left.\mathcal{R}^{-1}\right|_{\im\tilde{\mathcal{R}}}$.
\end{fact*}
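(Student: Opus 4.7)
The plan is to verify, in turn, the three ingredients asserted in the fact: full row rank of $\mathcal{C}$, matching of the dimensions $\dim\ker\mathcal{C}=\dim X_{\pi}$, and the identification of the image of $\tilde{\mathcal{R}}$ with $X_{\pi}$. First I would confirm that $\mathcal{C}$ has full row rank. The block rows of $\mathcal{C}$ are indexed by the interior mesh points $t_{1},\ldots,t_{n-1}$, and in each such block row the ``minus'' block $-I_{k}\otimes\bar{\mathcal{P}}_{j+1}(t_{j})$ contains, thanks to (\ref{eq:barPj}), the nonsingular leading submatrix $-h_{j+1}I_{k}$ in its first $k$ columns of the slot corresponding to $c_{j+1}$. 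Since each interior knot $t_{j}$ contributes exactly one such block row and the pivots live in pairwise disjoint column sets, the rows of $\mathcal{C}$ are linearly independent, giving $\operatorname{rank}\mathcal{C}=k(n-1)$ and $\dim\ker\mathcal{C}=n(mN+k)-k(n-1)=nmN+k$.

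Next I would count $\dim X_{\pi}$ directly. On each subinterval, the first $k$ components belong to $\mathfrak{P}_{N}$ ($k(N+1)$ parameters) and the last $m-k$ components to $\mathfrak{P}_{N-1}$ ($(m-k)N$ parameters), producing $mN+k$ parameters per subinterval and hence $\dim\tilde{X}_{\pi}=n(mN+k)$. The additional continuity requirement for the first $k$ components at each of the $n-1$ interior knots imposes $k(n-1)$ linearly independent linear conditions (again via (\ref{eq:barPj})--(\ref{eq:barPj2})), yielding $\dim X_{\pi}=nmN+k$, matching the dimension computed above.

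Third, I would use the equivalence already established in (\ref{eq:discconstraint}): for $c\in\R^{n(mN+k)}$, the element $\mathcal{R}(c)$ lies in $X_{\pi}$ if and only if $\mathcal{C}c=0$. This gives $\tilde{\mathcal{R}}(\ker\mathcal{C})\subseteq X_{\pi}$, and conversely, since every $x\in X_{\pi}\subseteq\tilde{X}_{\pi}$ has a (unique, by injectivity of $\mathcal{R}$) preimage $c$ which must then satisfy $\mathcal{C}c=0$, we obtain $\tilde{\mathcal{R}}(\ker\mathcal{C})=X_{\pi}$. Injectivity of $\tilde{\mathcal{R}}$ is inherited from that of $\mathcal{R}$, so $\tilde{\mathcal{R}}\colon\ker\mathcal{C}\to X_{\pi}$ is bijective. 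The final identity $\tilde{\mathcal{R}}^{-1}=\mathcal{R}^{-1}|_{\operatorname{im}\tilde{\mathcal{R}}}$ is then tautological: for $x\in\operatorname{im}\tilde{\mathcal{R}}=X_{\pi}$, $\mathcal{R}^{-1}(x)\in\ker\mathcal{C}$ by the equivalence above, and $\tilde{\mathcal{R}}(\mathcal{R}^{-1}(x))=\mathcal{R}(\mathcal{R}^{-1}(x))=x$.

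The only step with any substance is the full-rank verification of $\mathcal{C}$; after that the argument is a dimension count together with the unpacking of definitions. I therefore expect no real obstacle, only bookkeeping.
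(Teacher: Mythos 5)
Your argument is correct and takes essentially the same route as the paper, which treats this Fact as an immediate consequence of the full row rank of $\mathcal{C}$ (asserted just before the Fact, citing (\ref{eq:barPj})--(\ref{eq:barPj2})), the rank--nullity count $\dim\ker\mathcal{C}=n(mN+k)-k(n-1)$, and the injectivity of $\mathcal{R}$ together with the already established equivalence $\mathcal{R}c\in X_{\pi}\Leftrightarrow\mathcal{C}c=0$. One small point of rigor in your full-rank step: distinct pivot columns with nonsingular pivot blocks $-h_{j+1}I_{k}$ do not by themselves force row independence, so you should note that the pivot submatrix is block lower bidiagonal (block row $j$ meets the pivot columns of block row $j-1$ through $h_{j}I_{k}$, by (\ref{eq:barPj2})) with nonsingular diagonal blocks, or equivalently eliminate the block rows sequentially from $j=n-1$ down to $j=1$.
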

The representations (\ref{eq:xj})--(\ref{eq:Dxj}) can be inserted
into the functional $\Phi_{\pi,M}$ (\ref{eq:PhiM}). The result becomes
a least-squares functional of the form
\begin{equation}
\varphi(c)=\lvert\mathcal{A}c-r\rvert_{\R^{nmM+l_{dyn}}}^{2}\rightarrow\min!\label{eq:discmin}
\end{equation}
where $\mathcal{A}$ has the structure
\[
\mathcal{A}=\left[\begin{array}{ccccc}
\mathcal{A}_{1} & 0 & \cdots &  & 0\\
0 & \ddots &  &  & \vdots\\
\vdots &  & \ddots\\
 &  &  & \ddots & 0\\
0 &  &  &  & \mathcal{A}_{n}\\
G_{a}\Omega_{1}(t_{0}) & 0 & \cdots & 0 & G_{b}\Omega_{n}(t_{n})
\end{array}\right]
\]
where $\mathcal{A}_{j}\in\R^{mM\times(mN+k)}$ and $G_{a}\Omega_{1}(t_{0}),G_{b}\Omega_{n}(t_{n})\in\R^{l_{\textrm{dyn}}\times(mN+k)}$.

So the discrete version of the least-squares method (\ref{eq:PhiM})
becomes the linear least-squares problem (\ref{eq:discmin}) under
the linear equality constraint (\ref{eq:discconstraint}).

Note that it holds $r\in\R^{nmM+l_{\textrm{dyn}}}$ and $\mathcal{A}\in\R^{(nmM+l_{\textrm{dyn}})\times n(mN+k)}$.
The matrices $\mathcal{A}$ and $\mathcal{C}$ are very sparse. More
details of the construction of $\mathcal{A}$ and $\mathcal{C}$ can
be found in \cite{HM_Part2}.

\subsection{Conditioning of the implementation\label{subsec:Algorithm}}

The implementation for solving the least-squares problem (\ref{eq:PhiM})
consists of the following steps:
\begin{enumerate}
\item Form $\mathcal{A}$, $\mathcal{C}$, and $r$.
\item Solve the constraint least-squares problem (\ref{eq:discmin})-(\ref{eq:discconstraint}).
\item Form the approximation $x_{\pi}$.
\end{enumerate}
What are the errors to be expected? Consider the individual steps:
\begin{enumerate}
\item The computation of $\mathcal{C}$ is not critical. Depending on the
chosen basis, the entries of $\mathcal{C}$ may be available analytically.
So we expect at most rounding errors for the representation of the
analytical data.\footnote{In the case of the Legendre and modified Legendre bases, all entries are integers weighted
by the stepsizes.} While the components of $\mathcal{A}$ corresponding to the boundary
conditions are only subject to truncation errors when representing
real numbers in floating point arithmetic, the DAE related entries
are subject to rounding errors as well as certain amplification factors
stemming from the multiplication by the square root of the matrix
$\mathcal{L}$ (\ref{eq:L}). The conditioning of the versions (\ref{LC})
and (\ref{LI}) is easy to infer while that of (\ref{LR}) has been
discussed extensively in \cite{HM_Part1}. Under reasonable assumptions
on the choice of collocation points, they are rather small.

Similar considerations apply to the computation of $r$.
\item This algorithmic step corresponds to the solution of a linearly constrained
linear least-squares problem. A number of classical perturbation results
are available, e.g., \cite{LaHa74,Elden80,CoxHig99}. Further below,
we represent a modified version that is taking into account the special
role that the equality constraint $\mathcal{C}c=0$ is playing in
our application.
\item This step is described by the representation map $\mathcal{R}$, which
assigns, to each solution $c$ of the previous step, the corresponding
solution $x_{\pi}=\mathcal{R}c$. If $c\in\ker\mathcal{C}$, it holds
$x_{\pi}\in X_{\pi}\subseteq H_{D}^{1}(a,b)$. However, due to the
errors made in the previous step, the condition $c\in\ker\mathcal{C}$
cannot be guaranteed such that $\mathcal{R}c\in\tilde{X}_{\pi}$ but
not necessarily $\mathcal{R}c\in H_{D}^{1}(a,b)$! In the next section,
we will discuss the properties of $\mathcal{R}$.
\end{enumerate}

\section{Properties of the representation map $\mathcal{R}$\label{subsec:Matrix-B}}

In the present section we will investigate the properties of the representation
map $\mathcal{R}:\R^{n(mN+k)}\rightarrow\tilde{X}_{\pi}$ in more
detail. Previously, we have established a representation of $\mathcal{R}$
on each subinterval, see (\ref{eq:xj}). We intend to derive a representation
of $\mathcal{R}^{-1}$. The main tool will be interpolation.

Choose two sets of interpolation nodes 
\begin{align}
0\leq\bar{\sigma}_{1}<\cdots<\bar{\sigma}_{N+1}\leq1\;\text{ and }\;0\leq\sigma_{1}<\cdots<\sigma_{N}\leq1,\label{Nodes}
\end{align}
and shifted ones 
\[
\bar{\tau}_{ji}=t_{j-1}+\bar{\sigma}_{i}h_{j},\quad\tau_{ji}=t_{j-1}+\sigma_{i}h_{j}
\]
 such that the integration formulae
\begin{align*}
\int_{0}^{1}f(\sigma){\rm d\sigma}\approx\sum_{i=1}^{N+1}\bar{\gamma}_{i}f(\bar{\sigma}_{i}),\quad\text{and }\;\int_{0}^{1}f(\sigma){\rm d\sigma}\approx\sum_{i=1}^{N}\gamma_{i}f(\sigma_{i})
\end{align*}
have positive weights and so that they are exact for polynomials up
to degree $2N$ and $2N-2$, respectively. With matrices 
\begin{align}
\bar{V}_{j} & =\begin{bmatrix}\bar{p}_{j0}(\bar{\tau}_{j1}) & \cdots & \bar{p}_{jN}(\bar{\tau}_{j1})\\
\vdots &  & \vdots\\
\bar{p}_{j0}(\bar{\tau}_{j,N+1}) & \cdots & \bar{p}_{jN}(\bar{\tau}_{j,N+1})
\end{bmatrix}=h_{j}\begin{bmatrix}\bar{p}_{0}(\bar{\sigma}_{1}) & \cdots & \bar{p}_{N}(\bar{\sigma}_{1})\\
\vdots &  & \vdots\\
\bar{p}_{0}(\bar{\sigma}_{N+1}) & \cdots & \bar{p}_{N}(\bar{\sigma}_{N+1})
\end{bmatrix}=:h_{j}\bar{V},\label{eq:Vb}\\
V_{j} & =\begin{bmatrix}p_{j0}(\tau_{j1}) & \cdots & p_{j,N-1}(\tau_{j1})\\
\vdots &  & \vdots\\
p_{j0}(\tau_{jN}) & \cdots & p_{j,N-1}(\tau_{jN})
\end{bmatrix}=\begin{bmatrix}p_{0}(\sigma_{1}) & \cdots & p_{N-1}(\sigma_{1})\\
\vdots &  & \vdots\\
p_{0}(\sigma_{N}) & \cdots & p_{N-1}(\sigma_{N})
\end{bmatrix}=:V,\label{eq:V}
\end{align}
and 
\begin{align}
\bar{V}_{j}' & =\begin{bmatrix}\bar{p}'_{j0}(\bar{\tau}_{j1}) & \cdots & \bar{p}'_{j\,N}(\bar{\tau}_{j1})\\
\vdots &  & \vdots\\
\bar{p}'_{j0}(\bar{\tau}_{j,N+1}) & \cdots & \bar{p}'_{jN}(\bar{\tau}_{j,N+1})
\end{bmatrix}=\begin{bmatrix}0 & p_{0}(\bar{\sigma}_{1}) & \cdots & p_{N-1}(\bar{\sigma}_{1})\\
\vdots & \vdots &  & \vdots\\
0 & p_{0}(\bar{\sigma}_{N+1}) & \cdots & p_{N-1}(\bar{\sigma}_{N+1})
\end{bmatrix}=:\mathring{V},\label{eq:V0}
\end{align}
we represent, for $\kappa=1,\ldots,k$, 
\begin{align*}
X_{j\kappa}:=\begin{bmatrix}x_{j\kappa}(\bar{\tau}_{j1})\\
\vdots\\
x_{j\kappa}(\bar{\tau}_{j,N+1})
\end{bmatrix}=\bar{V}_{j}c_{j\kappa}=h_{j}\bar{V}c_{j\kappa},\\
X'_{j\kappa}:=\begin{bmatrix}x'_{j\kappa}(\bar{\tau}_{j1})\\
\vdots\\
x'_{j\kappa}(\bar{\tau}_{j,N+1})
\end{bmatrix}=\bar{V}'_{j}c_{j\kappa}=\mathring{V}c_{j\kappa},
\end{align*}
and, for $\kappa=k+1,\ldots,m$, 
\begin{align*}
X_{j\kappa}:=\begin{bmatrix}x_{j\kappa}(\tau_{j1})\\
\vdots\\
x_{j\kappa}(\tau_{jN})
\end{bmatrix}=V_{j}c_{j\kappa}=Vc_{j\kappa}.
\end{align*}
The matrices $\bar{V}$ and $V$ are nonsingular. This amounts to
the relation 
\begin{align}
c_{j}=\begin{bmatrix}c_{j1}\\
\vdots\\
c_{jk}\\
c_{j,k+1}\\
\vdots\\
c_{jm}
\end{bmatrix}=\begin{bmatrix}I_{k}\otimes\bar{V}^{-1}\\
 & I_{m-k}\otimes V^{-1}
\end{bmatrix}\begin{bmatrix}\frac{1}{h_{j}}X_{j1}\\
\vdots\\
\frac{1}{h_{j}}X_{jk}\\
X_{j,k+1}\\
\vdots\\
X_{jm}
\end{bmatrix},\;j=1,\ldots,n.\label{Inverse}
\end{align}
Owing to the fact, that polynomials of degree $N$ and $N-1$ are
uniquely determined by their values at $N+1$ and $N$ different nodes,
respectively, formula (\ref{Inverse}) provides $c=\mathcal{R}^{-1}x$
for each arbitrary given $x\in\tilde{X}_{\pi}$.\medskip{}

Next, we equip $\tilde{X}_{\pi}$ with the norms 
\begin{align}
\lVert x\rVert_{L^{2}}^{2} & =\sum_{j=1}^{n}\left\{ \sum_{\kappa=1}^{k}\int_{t_{j-1}}^{t_{j}}\lvert x_{j\kappa}(t)\rvert^{2}\dt t+\,\sum_{\kappa=k+1}^{m}\int_{t_{j-1}}^{t_{j}}\lvert x_{j\kappa}(t)\rvert^{2}\dt t\,\right\} ,\label{eq:L-1}\\
\lVert x\rVert_{H_{D,\pi}^{1}}^{2} & =\sum_{j=1}^{n}\left\{ \sum_{\kappa=1}^{k}\int_{t_{j-1}}^{t_{j}}(\lvert x_{j\kappa}(t)\rvert^{2}+\lvert x_{j\kappa}'(t)\rvert^{2})\dt t+\,\sum_{\kappa=k+1}^{m}\int_{t_{j-1}}^{t_{j}}\lvert x_{j\kappa}(t)\rvert^{2}\dt t\,\right\} .\label{eq:H}
\end{align}
The latter norm reduces, for $x\in X_{\pi}$, to $\lVert x\rVert_{H_{D,\pi}^{1}}=\|x\|_{H_{D}^{1}(a,b)}$.
Moreover, $\lVert\cdot\rVert_{L^{2}}=\lVert\cdot\rVert_{L^{2}((a,b),\R^{m})}$.
On $\R^{n(mN+k)}$, we use the Euclidean norm. Then $\mathcal{R}$
becomes a homeomorphism in each case, and we are interested in the
respective operator norms $\lVert\mathcal{R}\rVert_{\R^{n(mN+k)}\rightarrow L^{2}}$,
$\lVert\mathcal{R}\rVert_{\R^{n(mN+k)}\rightarrow H_{D,\pi}^{1}}$,
$\|\mathcal{R}^{-1}\|_{L^{2}\rightarrow\R^{n(mN+k)}}$, and $\|\mathcal{R}^{-1}\|_{H_{D,\pi}^{1}\rightarrow\R^{n(mN+k)}}$.
Regarding the properties of the related integration formulae and introducing
the diagonal matrices 
\begin{equation}
\bar{\Gamma}=\diag(\bar{\gamma}_{1}^{1/2},\cdots,\bar{\gamma}_{N+1}^{1/2}),\;\Gamma=\diag(\gamma_{1}^{1/2},\cdots,\gamma_{N}^{1/2})\label{eq:Gam}
\end{equation}
we compute for any $x=\mathcal{R}c$, and $\kappa=1,\ldots,k$, 
\begin{align*}
\int_{t_{j-1}}^{t_{j}}\lvert x_{j\kappa}(t)\rvert^{2}\dt t & =h_{j}\sum_{i=1}^{N+1}\bar{\gamma}_{i}\lvert x_{j\kappa}(\bar{\tau}_{ji})\rvert^{2}=h_{j}\sum_{i=1}^{N+1}\lvert\bar{\gamma}_{i}^{1/2}x_{j\kappa}(\bar{\tau}_{ji})\rvert^{2}=h_{j}\lvert\bar{\Gamma}X_{j\kappa}\rvert^{2}\\
 & =h_{j}\lvert\bar{\Gamma}\bar{V}_{j}c_{j\kappa}\rvert^{2}=h_{j}\lvert\bar{\Gamma}h_{j}\bar{V}c_{j\kappa}\rvert^{2},
\end{align*}
\begin{align*}
\int_{t_{j-1}}^{t_{j}} & (\lvert x_{j\kappa}(t)\rvert^{2}+\lvert x_{j\kappa}'(t)\rvert^{2})\dt t=h_{j}\sum_{i=1}^{N+1}\bar{\gamma}_{i}(\lvert x_{j\kappa}(\bar{\tau}_{ji})\rvert^{2}+\lvert x_{j\kappa}'(\bar{\tau}_{ji})\rvert^{2})\\
 & =h_{j}\sum_{i=1}^{N+1}(\lvert\bar{\gamma}_{i}^{1/2}x_{j\kappa}(\bar{\tau}_{ji})\rvert^{2}+(\lvert\bar{\gamma}_{i}^{1/2}x_{j\kappa}'(\bar{\tau}_{ji})\rvert^{2})=h_{j}\lvert\bar{\Gamma}X_{j\kappa}\rvert^{2}+h_{j}\lvert\bar{\Gamma}X_{j\kappa}'\rvert^{2}\\
 & =h_{j}\lvert\bar{\Gamma}\bar{V}_{j}c_{j\kappa}\rvert^{2}+h_{j}\lvert\bar{\Gamma}\mathring{V}c_{j\kappa}\rvert^{2}=h_{j}\lvert\bar{\Gamma}h_{j}\bar{V}c_{j\kappa}\rvert^{2}+h_{j}\lvert\bar{\Gamma}\mathring{V}c_{j\kappa}\rvert^{2}\\
 & =h_{j}\left|\begin{bmatrix}h_{j}\bar{\Gamma}\bar{V}\\
\bar{\Gamma}\mathring{V}
\end{bmatrix}c_{j\kappa}\right|^{2},
\end{align*}
and, in addition, for $\kappa=k+1,\ldots,m$, 
\begin{align*}
\int_{t_{j-1}}^{t_{j}}\lvert x_{j\kappa}(t)\rvert^{2}\dt t & =h_{j}\sum_{i=1}^{N}\gamma_{i}\lvert x_{j\kappa}(\tau_{ji})\rvert^{2}=h_{j}\sum_{i=1}^{N}\lvert\gamma_{i}^{1/2}x_{j\kappa}(\tau_{ji})\rvert^{2}=h_{j}\lvert\Gamma X_{j\kappa}\rvert^{2}\\
 & =h_{j}\lvert\Gamma Vc_{j\kappa}\rvert^{2}.
\end{align*}
Summarizing, the following representations result: 
\begin{align}
\rVert x\lVert_{L^{2}}^{2}=\sum_{j=1}^{n}\left\{ \sum_{\kappa=1}^{k}\lvert h_{j}^{3/2}\bar{\Gamma}\bar{V}c_{j\kappa}\rvert^{2}+\sum_{\kappa=k+1}^{m}\lvert h_{j}^{1/2}\Gamma Vc_{j\kappa}\rvert^{2}\right\} =\sum_{j=1}^{n}\lvert U_{j}c_{j}\rvert^{2}=|\mathcal{U}c|^{2},\label{WTW}
\end{align}
with matrices 
\begin{align}
\mathcal{U} & =\diag(U_{1},\cdots,U_{n})\in\R^{n(mN+k)\times n(mN+k)},\label{eq:U}\\
U_{j} & =\begin{bmatrix}I_{k}\otimes h_{j}^{3/2}\bar{\Gamma}\bar{V}\\
 & I_{m-k}\otimes h_{j}^{1/2}\Gamma V
\end{bmatrix}\in\R^{(mN+k)\times(mN+k)},\nonumber 
\end{align}
and 
\begin{align}
\rVert x\lVert_{H_{D,\pi}^{1}}^{2}=\sum_{j=1}^{n}\left\{ \sum_{\kappa=1}^{k}\left|\begin{bmatrix}h_{j}^{3/2}\bar{\Gamma}\bar{V}\\
h_{j}^{1/2}\bar{\Gamma}\mathring{V}
\end{bmatrix}c_{j\kappa}\right|^{2}+\sum_{\kappa=k+1}^{m}\lvert h_{j}^{1/2}\Gamma Vc_{j\kappa}\rvert^{2}\right\} =\sum_{j=1}^{n}\lvert\hat{U}_{j}c_{j}\rvert^{2}=|\hat{\mathcal{U}}c|^{2},\label{eq:WTWh}
\end{align}
with matrices 
\begin{align}
\hat{\mathcal{U}} & =\diag(\hat{U}_{1},\cdots,\hat{U}_{n})\in\R^{n(mN+k+k(N+1))\times n(mN+k)},\label{eq:Uhat}\\
\hat{U}_{j} & =\begin{bmatrix}I_{k}\otimes\begin{bmatrix}h_{j}^{3/2}\bar{\Gamma}\bar{V}\\
h_{j}^{1/2}\bar{\Gamma}\mathring{V}
\end{bmatrix}\\
 & I_{m-k}\otimes h_{j}^{1/2}\Gamma V
\end{bmatrix}\in\R^{(mN+k+k(N+1))\times(mN+k)}.\nonumber 
\end{align}

\begin{prop}
\label{prop:The-singular-values}The singular values of $\mathcal{U}$
and $\hat{\mathcal{U}}$ are independent of the choice of the nodes
$\sigma_{i}$ and $\bar{\sigma}_{i}$. Moreover, all singular values
are positive.
\end{prop}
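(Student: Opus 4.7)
The plan is to exploit the two identities \eqref{WTW} and \eqref{eq:WTWh}, which already express $|\mathcal{U}c|^{2}$ and $|\hat{\mathcal{U}}c|^{2}$ as quadrature approximations of the node-independent integrals $\|\mathcal{R}c\|_{L^{2}}^{2}$ and $\|\mathcal{R}c\|_{H^{1}_{D,\pi}}^{2}$. If the quadratures happen to be \emph{exact} for the integrands appearing there, the two quadratic forms become intrinsically defined, and the singular values of $\mathcal{U}$, $\hat{\mathcal{U}}$ (which are square roots of eigenvalues of $\mathcal{U}^{T}\mathcal{U}$, $\hat{\mathcal{U}}^{T}\hat{\mathcal{U}}$) must therefore be independent of the node choice. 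So the first step is simply to verify the degree-matching: for $\kappa\le k$, $x_{j\kappa}$ is a polynomial of degree at most $N$, hence $|x_{j\kappa}|^{2}$ has degree at most $2N$, which is covered by the $(N+1)$-node rule that is exact up to degree $2N$; similarly $|x_{j\kappa}'|^{2}$ has degree at most $2(N-1)\le 2N$, and for $\kappa>k$, $|x_{j\kappa}|^{2}$ has degree at most $2N-2$, matching the $N$-node rule. The relations \eqref{WTW} and \eqref{eq:WTWh} therefore hold with equality.

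Next, I would invoke the standard fact that a symmetric matrix is uniquely determined by its associated quadratic form. Since
\[
c^{T}\mathcal{U}^{T}\mathcal{U}c = \|\mathcal{R}c\|_{L^{2}}^{2},\qquad c^{T}\hat{\mathcal{U}}^{T}\hat{\mathcal{U}}c = \|\mathcal{R}c\|_{H^{1}_{D,\pi}}^{2}
\]
hold for every $c\in\R^{n(mN+k)}$ and the right-hand sides do not depend on $\{\sigma_{i}\}$ or $\{\bar{\sigma}_{i}\}$, the Gram matrices $\mathcal{U}^{T}\mathcal{U}$ and $\hat{\mathcal{U}}^{T}\hat{\mathcal{U}}$ themselves are independent of the node choice. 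Hence their eigenvalues, and thus the singular values of $\mathcal{U}$ and $\hat{\mathcal{U}}$, do not depend on the $\sigma_{i}$, $\bar{\sigma}_{i}$.

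For the positivity claim, I would argue by injectivity. If $\mathcal{U}c=0$, then $\|\mathcal{R}c\|_{L^{2}}=0$, so $\mathcal{R}c$ vanishes a.e.\ on $[a,b]$; since $\mathcal{R}c\in\tilde{X}_{\pi}$ is piecewise polynomial, it is identically zero. The Fact following the definition of $\mathcal{R}$ states that $\mathcal{R}$ is bijective, hence $c=0$. Consequently $\ker\mathcal{U}=\{0\}$, and since $\mathcal{U}$ is square, all its singular values are strictly positive. The same argument applies to $\hat{\mathcal{U}}$ because $\|\mathcal{R}c\|_{H^{1}_{D,\pi}}^{2}\ge \|\mathcal{R}c\|_{L^{2}}^{2}$.

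There is no real obstacle here; the only point requiring a moment of care is the degree-matching bookkeeping in the first step, since the two quadratures have different exactness degrees ($2N$ versus $2N-2$) and these must be paired with the correct subset of components of $x$. Once this is correctly aligned, the rest is a direct consequence of the fact that a positive-definite quadratic form uniquely encodes its matrix.
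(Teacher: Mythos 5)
Your proof is correct and follows essentially the same route as the paper: both arguments rest on the exactness of the quadrature rules (which makes $\mathcal{U}^{T}\mathcal{U}$ and $\hat{\mathcal{U}}^{T}\hat{\mathcal{U}}$ node-independent — you phrase this via the quadratic form $c^{T}\mathcal{U}^{T}\mathcal{U}c=\lVert\mathcal{R}c\rVert_{L^{2}}^{2}$, the paper via the entrywise identities $(V^{T}\Gamma^{2}V)_{\alpha\beta}=\int_{0}^{1}p_{\alpha-1}p_{\beta-1}\,{\rm d}\sigma$ etc., which is the same fact) and on nonsingularity for positivity (you via injectivity of $\mathcal{R}$, the paper via full column rank of the blocks $U_{j}$, $\hat{U}_{j}$). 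Your degree-matching bookkeeping is accurate and the argument is complete.
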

\begin{proof}
$U_{j}$ and $\hat{U}_{j}$ have full column-rank. Consequently $\mathcal{U}^{T}\mathcal{U}$
and $\hat{\mathcal{U}}^{T}\hat{\mathcal{U}}$ are symmetric and positive
definite. Hence, their eigenvalues are all positive and, thus, also
their singular values being the square root of the eigenvalues. The
eigenvalues are independent of the choice of the nodes $\sigma_{i}$
and $\bar{\sigma}_{i}$ since, owing to the properties of the involved
integration formulae, it holds that
\begin{align*}
(V^{T}\Gamma^{2}V)_{\alpha\beta} & =\int_{0}^{1}p_{\alpha-1}p_{\beta-1}(\sigma){\rm d\sigma},\;\alpha,\beta=1,\cdots,N,\\
(\bar{V}^{T}\bar{\Gamma}^{2}\bar{V})_{\alpha\beta} & =\int_{0}^{1}\bar{p}_{\alpha-1}\bar{p}_{\beta-1}(\sigma){\rm d\sigma},\;\alpha,\beta=1,\cdots,N+1,\\
(\mathring{V}^{T}\bar{\Gamma}^{2}\mathring{V})_{\alpha\beta} & =\int_{0}^{1}\bar{p}'_{\alpha-1}\bar{p}'_{\beta-1}(\sigma){\rm d\sigma},\;\alpha,\beta=1,\cdots,N+1,
\end{align*}
such that the entries of $\mathcal{U}^{T}\mathcal{U}$ and $\hat{\mathcal{U}}^{T}\hat{\mathcal{U}}$
are independent of the choice of the integration formulae.
\end{proof}
\begin{thm}
\label{th:Rnorm}Let $\sigma_{\textrm{min}}(\mathcal{U})$ and $\sigma_{\textrm{max}}(\mathcal{U})$
denote the maximal and minimal singular values of $\mathcal{U}$.
Similarly, let $\sigma_{\textrm{min}}(\mathcal{\hat{U}})$ and $\sigma_{\textrm{max}}(\mathcal{\hat{U}})$
denote the maximal and minimal singular values of $\hat{\mathcal{U}}$.
Then it holds
\begin{align*}
\lVert\mathcal{R}\rVert_{\R^{n(mN+k)}\rightarrow L^{2}} & =\sigma_{\textrm{max}}(\mathcal{U}),\quad\|\mathcal{R}^{-1}\|_{L^{2}\rightarrow\R^{n(mN+k)}}=\sigma_{\textrm{min}}(\mathcal{U})^{-1},\\
\lVert\mathcal{R}\rVert_{\R^{n(mN+k)}\rightarrow H_{D,\pi}^{1}} & =\sigma_{\textrm{max}}(\mathcal{\hat{U}}),\quad\|\mathcal{R}^{-1}\|_{H_{D,\pi}^{1}\rightarrow\R^{n(mN+k)}}=\sigma_{\textrm{min}}(\mathcal{\hat{U}})^{-1}.
\end{align*}
\end{thm}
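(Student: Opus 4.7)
The plan is to read off the four identities directly from the two key formulas already derived in the text, namely the representations
\begin{align*}
\lVert\mathcal{R}c\rVert_{L^{2}}^{2}=\lvert\mathcal{U}c\rvert^{2},\qquad \lVert\mathcal{R}c\rVert_{H_{D,\pi}^{1}}^{2}=\lvert\hat{\mathcal{U}}c\rvert^{2},
\end{align*}
which hold for every $c\in\R^{n(mN+k)}$ by (\ref{WTW}) and (\ref{eq:WTWh}). Taking square roots turns the problem into a purely matrix-theoretic one: the operator norm of $\mathcal{R}$ in each case coincides with the induced Euclidean norm of $\mathcal{U}$ or $\hat{\mathcal{U}}$, and similarly for $\mathcal{R}^{-1}$ after inverting these matrices.

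Concretely, I would first argue the forward direction. By definition,
\[
\lVert\mathcal{R}\rVert_{\R^{n(mN+k)}\to L^{2}}=\sup_{c\neq0}\frac{\lVert\mathcal{R}c\rVert_{L^{2}}}{\lvert c\rvert}=\sup_{c\neq0}\frac{\lvert\mathcal{U}c\rvert}{\lvert c\rvert}=\lVert\mathcal{U}\rVert_{2}=\sigma_{\textrm{max}}(\mathcal{U}),
\]
using the standard identification of the Euclidean operator norm with the largest singular value. The analogous computation, with $\hat{\mathcal{U}}$ replacing $\mathcal{U}$, gives the corresponding identity for the $H_{D,\pi}^{1}$-norm.

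For the inverse direction, I would first invoke the fact established before the theorem that $\mathcal{R}$ is bijective onto $\tilde{X}_{\pi}$, together with Proposition~\ref{prop:The-singular-values}, which ensures that all singular values of $\mathcal{U}$ and $\hat{\mathcal{U}}$ are strictly positive; in particular these matrices are invertible. Substituting $c=\mathcal{R}^{-1}x$ in the identity and using bijectivity,
\[
\|\mathcal{R}^{-1}\|_{L^{2}\to\R^{n(mN+k)}}=\sup_{x\neq0}\frac{\lvert\mathcal{R}^{-1}x\rvert}{\lVert x\rVert_{L^{2}}}=\sup_{c\neq0}\frac{\lvert c\rvert}{\lvert\mathcal{U}c\rvert}=\lVert\mathcal{U}^{-1}\rVert_{2}=\sigma_{\textrm{min}}(\mathcal{U})^{-1},
\]
and the $H_{D,\pi}^{1}$-statement follows by the same substitution with $\hat{\mathcal{U}}$.

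There is really no hard step: the work has already been done in assembling the matrices $\mathcal{U}$ and $\hat{\mathcal{U}}$ and in verifying their invertibility. The only small thing to be careful about is making the change of variables $c\leftrightarrow x=\mathcal{R}c$ rigorous, which requires precisely the bijectivity of $\mathcal{R}$ onto $\tilde{X}_{\pi}$ noted earlier; without it, the supremum over $x\in\tilde{X}_{\pi}$ might a priori be taken over a strictly smaller set than the supremum over $c\in\R^{n(mN+k)}$.
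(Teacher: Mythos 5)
Your proof is correct and follows essentially the same route as the paper: both reduce the operator norms of $\mathcal{R}$ and $\mathcal{R}^{-1}$ to the extreme singular values of $\mathcal{U}$ and $\hat{\mathcal{U}}$ via the identities (\ref{WTW}) and (\ref{eq:WTWh}), the paper merely making the last step explicit through the singular value decomposition. The one point to phrase carefully is that $\hat{\mathcal{U}}$ is rectangular, so ``inverting'' it should be read as using its full column rank (guaranteed by Proposition~\ref{prop:The-singular-values}) to conclude $\sup_{c\neq0}\lvert c\rvert/\lvert\hat{\mathcal{U}}c\rvert=\sigma_{\min}(\hat{\mathcal{U}})^{-1}$, rather than literally forming $\hat{\mathcal{U}}^{-1}$.
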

\begin{proof}
It holds $\mathcal{\hat{U}}\in\R^{\nu\times\lambda}$ with $\nu=n(mN+k+k(N+1))$
and $\lambda=n(mN+k)$. Let $\hat{\mathcal{U}}=U\Sigma V^{T}$ be
the singular value decomposition of $\mathcal{U}$. Here,
\[
\Sigma=\left[\begin{array}{ccc}
s_{1}\\
 & \ddots\\
 &  & s_{\nu}\\
0 & \cdots & 0
\end{array}\right]\in\R^{\nu\times\lambda}
\]
with $s_{1}=\sigma_{\textrm{max}}(\hat{\mathcal{U}})$ and $s_{\nu}=\sigma_{\textrm{min}}(\hat{\mathcal{U}})$.
According to Proposition~\ref{prop:The-singular-values}, $\sigma_{\textrm{min}}(\hat{\mathcal{U}})>0$.
By (\ref{eq:WTWh}), this leads to
\[
\lVert\mathcal{R}\rVert_{\R^{n(mN+k)}\rightarrow H_{D,\pi}^{1}}=\sup_{c\neq0}\frac{\lVert\mathcal{R}c\rVert_{H_{D,\pi}^{1}}}{\lvert c\rvert_{\R^{\lambda}}}=\sup_{c\neq0}\frac{\lvert\hat{\mathcal{U}}c\rvert_{\R^{\nu}}}{\lvert c\rvert_{\R^{\lambda}}}=\sup_{\chi\neq0}\frac{\lvert\Sigma\chi\rvert_{\R^{\nu}}}{\lvert\chi\rvert_{\R^{\lambda}}}=\sigma_{\textrm{max}}(\hat{\mathcal{U}})
\]
and
\[
\lVert\mathcal{R}^{-1}\rVert_{H_{D,\pi}^{1}\rightarrow\R^{n(mN+k)}}=\sup_{x\neq0}\frac{\lvert\mathcal{R}^{-1}x\rvert_{\R^{\lambda}}}{\lVert x\rVert_{H_{D,\pi}^{1}}}=\sup_{c\neq0}\frac{\lvert c\rvert_{\R^{\lambda}}}{\lVert\mathcal{R}c\rVert_{H_{D,\pi}^{1}}}=\sup_{\chi\neq0}\frac{\lvert\chi\rvert_{\R^{\lambda}}}{\lvert\Sigma\chi\rvert_{\R^{\nu}}}=\sigma_{\textrm{min}}(\hat{\mathcal{U}})^{-1}.
\]
The statements concerning $\lVert\mathcal{R}\rVert_{\R^{n(mN+k)}\rightarrow L^{2}}$
and $\|\mathcal{R}^{-1}\|_{H_{D,\pi}^{1}\rightarrow\R^{n(mN+k)}}$
follow similarly.
\end{proof}
Using the structure (\ref{eq:U}) of $\mathcal{U}$, we obtain
\begin{align*}
\sigma_{\textrm{max}}(\mathcal{U}) & =\max_{j=1,\ldots,n}\max\{h_{j}^{3/2}\sigma_{\textrm{max}}(\bar{\Gamma}\bar{V}),h_{j}^{1/2}\sigma_{\textrm{max}}(\Gamma V)\}\\
 & =\max_{j=1,\ldots,n}h_{j}^{1/2}\max\{h_{j}\sigma_{\textrm{max}}(\bar{\Gamma}\bar{V}),\sigma_{\textrm{max}}(\Gamma V)\},\\
\sigma_{\textrm{min}}(\mathcal{U}) & =\min_{j=1,\ldots,n}\min\{h_{j}^{3/2}\sigma_{\textrm{min}}(\bar{\Gamma}\bar{V}),h_{j}^{1/2}\sigma_{\textrm{min}}(\Gamma V)\}\\
 & =\min_{j=1,\ldots,n}h_{j}^{1/2}\min\{h_{j}\sigma_{\textrm{min}}(\bar{\Gamma}\bar{V}),\sigma_{\textrm{min}}(\Gamma V)\}.
\end{align*}
The estimation of the singular values of $\hat{\mathcal{U}}$ leads
to slightly more involved expressions. Let $U_{j,\textrm{red}}=\left[\begin{array}{c}
h_{j}\bar{\Gamma}\bar{V}\\
\bar{\Gamma}\mathring{V}
\end{array}\right]$. Then, it holds
\begin{align*}
\sigma_{\textrm{max}}(\mathcal{\hat{U}}) & =\max_{j=1,\ldots,n}h_{j}^{1/2}\max\{\sigma_{\textrm{max}}(U_{j,\textrm{red}}),\sigma_{\textrm{max}}(\Gamma V)\},\\
\sigma_{\textrm{min}}(\hat{\mathcal{U}}) & =\min_{j=1,\ldots,n}h_{j}^{1/2}\min\{\sigma_{\textrm{min}}(U_{j,\textrm{red}}),\sigma_{\textrm{min}}(\Gamma V)\}.
\end{align*}
We note that $\sigma_{\min}(\bar{\Gamma}\mathring{V})=0$ and $\sigma_{\max}(\Gamma V)=\sigma_{\max}(\bar{\Gamma}\mathring{V})$.
This follows immediately from the construction of the basis for the
differential components (\ref{eq:diffBasis}). The definition of singular
values and Weyl's Theorem \cite[Theorem III.2.1]{Bhatia97} provides
us with
\begin{align*}
\lambda_{\textrm{max}}(\mathring{V}^{T}\bar{\Gamma}^{2}\mathring{V})\leq & \lambda_{\textrm{max}}(h_{j}^{2}\bar{V}^{T}\bar{\Gamma}^{2}\bar{V}+\mathring{V}^{T}\bar{\Gamma}^{2}\mathring{V})=\sigma_{\textrm{max}}(U_{j,\textrm{red}})^{2}\\
 & \leq h_{j}^{2}\lambda_{\textrm{max}}(\bar{V}^{T}\bar{\Gamma}^{2}\bar{V})+\lambda_{\textrm{max}}(\mathring{V}^{T}\bar{\Gamma}^{2}\mathring{V}),\\
h_{j}^{2}\lambda_{\textrm{min}}(\bar{V}^{T}\bar{\Gamma}^{2}\bar{V})\leq & \lambda_{\textrm{min}}(h_{j}^{2}\bar{V}^{T}\bar{\Gamma}^{2}\bar{V}+\mathring{V}^{T}\bar{\Gamma}^{2}\mathring{V})=\sigma_{\textrm{min}}(U_{j,\textrm{red}})^{2}\leq h_{j}^{2}\lambda_{\textrm{max}}(\bar{V}^{T}\bar{\Gamma}^{2}\bar{V})
\end{align*}
since $\lambda_{\min}(\mathring{V}^{T}\bar{\Gamma}^{2}\mathring{V})=0$.
Then,
\begin{align*}
\sigma_{\max}(\Gamma V)= & \lambda_{\max}(V^{T}\Gamma^{2}V)^{1/2}\\
\leq & \max\{\sigma_{\textrm{max}}(U_{j,\textrm{red}}),\sigma_{\textrm{max}}(\Gamma V)\}\\
\leq & \sigma_{\textrm{max}}(\Gamma V)+O(h).
\end{align*}
Moreover, 
\begin{align*}
\min\{h_{j}\sigma_{\min}(\bar{\Gamma}\bar{V}),\sigma_{\min}(\Gamma V)\} & \leq\min\{\sigma_{\textrm{min}}(U_{j,\textrm{red}}),\sigma_{\min}(\Gamma V)\}\\
 & \leq\min\{h_{j}\sigma_{\max}(\bar{\Gamma}\bar{V}),\sigma_{\min}(\Gamma V)\}
\end{align*}

Collecting all estimates Theorem~\ref{th:Rnorm} provides 
\begin{thm}
\label{thm:asymptotic}Let the grid (\ref{eq:mesh}) be equidistant
with the stepsize $h$. Furthermore, let $\Gamma$ and $\bar{\Gamma}$
be given by (\ref{eq:Gam}) and let $V$, $\bar{V}$ and $\mathring{V}$
be given by (\ref{eq:V}), (\ref{eq:Vb}), (\ref{eq:V0}). Then it
holds, for sufficiently small $h$,

\begin{align*}
\lVert\mathcal{R}\rVert_{\R^{n(mN+k)}\rightarrow L^{2}} & =h^{1/2}\sigma_{\max}(\Gamma V)=O(h^{1/2}),\\
\|\mathcal{R}^{-1}\|_{L^{2}\rightarrow\R^{n(mN+k)}} & =h^{-3/2}\sigma_{\min}(\bar{\Gamma}\bar{V})^{-1}=O(h^{-3/2}),\\
\lVert\mathcal{R}\rVert_{\R^{n(mN+k)}\rightarrow H_{D,\pi}^{1}} & =h^{1/2}\sigma_{\max}(\Gamma V)+O(h^{3/2})=O(h^{1/2}),
\end{align*}
and
\[
h^{-3/2}\sigma_{\max}(\bar{\Gamma}\bar{V})^{-1}\leq\lVert\mathcal{R}^{-1}\rVert_{H_{D,\pi}^{1}\rightarrow\R^{n(mN+k)}}\leq h^{-3/2}\sigma_{\min}(\bar{\Gamma}\bar{V})^{-1}.
\]
In particular, $\lVert\mathcal{R}\rVert_{\R^{n(mN+k)}\rightarrow H_{D,\pi}^{1}}=\lVert\mathcal{R}\rVert_{\R^{n(mN+k)}\rightarrow L^{2}}+O(h^{3/2})$.
\end{thm}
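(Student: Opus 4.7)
The plan is to apply Theorem~\ref{th:Rnorm}, which converts each of the four operator norms into an extremal singular value of $\mathcal{U}$ or $\hat{\mathcal{U}}$, and then to evaluate those singular values using the block-diagonal formulas that have already been worked out immediately before the theorem. Since the grid is equidistant, every block $U_j$ or $\hat U_j$ is identical, so the outer $\max_j / \min_j$ over subintervals disappears, leaving the task of resolving two inner $\max/\min$ between $h$-independent quantities ($\sigma_{\max,\min}(\Gamma V)$ and $\sigma_{\max,\min}(\bar{\Gamma}\bar{V})$, which by Proposition~\ref{prop:The-singular-values} are strictly positive constants) and $h$-dependent ones (factors of $h$ in front of the differential-block terms). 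For $h$ sufficiently small, these comparisons are decided by the $h$-independent sides: in $\sigma_{\max}(\mathcal{U})$ the term $\sigma_{\max}(\Gamma V)$ beats $h\sigma_{\max}(\bar{\Gamma}\bar V)$, while in $\sigma_{\min}(\mathcal{U})$ the term $h\sigma_{\min}(\bar{\Gamma}\bar V)$ beats $\sigma_{\min}(\Gamma V)$. Multiplying by the overall factor $h^{1/2}$ gives the first two asserted identities.

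For the $H_{D,\pi}^{1}$-norms I would plug in the Weyl-type bounds on $\sigma_{\max}(U_{j,\textrm{red}})$ and $\sigma_{\min}(U_{j,\textrm{red}})$ that were derived just before the theorem. From
\[
\sigma_{\max}(\Gamma V)^{2} \leq \sigma_{\max}(U_{j,\textrm{red}})^{2} \leq \sigma_{\max}(\Gamma V)^{2}+h^{2}\sigma_{\max}(\bar{\Gamma}\bar V)^{2},
\]
the estimate $\sqrt{a^{2}+x^{2}}\leq a+x$ yields $\sigma_{\max}(U_{j,\textrm{red}}) = \sigma_{\max}(\Gamma V)+O(h)$, while comparing with $\sigma_{\max}(\Gamma V)$ in the outer maximum shows the former dominates. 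Multiplying by $h^{1/2}$ delivers $\lVert\mathcal{R}\rVert_{H_{D,\pi}^{1}} = h^{1/2}\sigma_{\max}(\Gamma V)+O(h^{3/2})$. For the minimum, the already-derived two-sided bound $h\sigma_{\min}(\bar{\Gamma}\bar V)\leq \sigma_{\min}(U_{j,\textrm{red}})\leq h\sigma_{\max}(\bar{\Gamma}\bar V)$ shows that for small $h$ this quantity is smaller than $\sigma_{\min}(\Gamma V)$, so the outer minimum is realized by $\sigma_{\min}(U_{j,\textrm{red}})$. Taking reciprocals and multiplying by $h^{-1/2}$ gives the sandwich bound for $\lVert\mathcal{R}^{-1}\rVert_{H_{D,\pi}^{1}\to\R^{n(mN+k)}}$.

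The concluding identity $\lVert\mathcal{R}\rVert_{\R^{n(mN+k)}\to H_{D,\pi}^{1}} = \lVert\mathcal{R}\rVert_{\R^{n(mN+k)}\to L^{2}}+O(h^{3/2})$ is then immediate by subtracting the two expansions established above. The main subtlety, and really the only nonroutine point, is making sure the remainder in $\sigma_{\max}(U_{j,\textrm{red}})-\sigma_{\max}(\Gamma V)$ is kept additive of order $h$ rather than multiplicative, so that after multiplication by $h^{1/2}$ one obtains $O(h^{3/2})$ and not some larger power; this is precisely where the elementary inequality $\sqrt{a^{2}+x^{2}}\leq a+x$ (as opposed to a Taylor expansion that would give $O(h^{2})$ only when $a\neq0$ but is worthless if $a=0$, a case we do not need but which clarifies why the linear bound is the safe choice) does the work. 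No other step requires more than direct substitution into the formulas for $\sigma_{\max,\min}(\mathcal{U})$ and $\sigma_{\max,\min}(\hat{\mathcal{U}})$ that precede the theorem.
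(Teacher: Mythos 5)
Your proposal is correct and follows essentially the same route as the paper, which simply ``collects'' the block-diagonal singular-value formulas for $\mathcal{U}$ and $\hat{\mathcal{U}}$ and the Weyl-type bounds on $\sigma_{\max,\min}(U_{j,\textrm{red}})$ derived immediately before the statement and feeds them into Theorem~\ref{th:Rnorm}, resolving the inner $\max/\min$ for small $h$ exactly as you do. (Your parenthetical worry about the Taylor expansion is a non-issue here since $\sigma_{\max}(\Gamma V)>0$ --- Taylor would in fact give an even smaller $O(h^{5/2})$ remainder --- but the elementary bound $\sqrt{a^{2}+x^{2}}\leq a+x$ you invoke is precisely what the paper's additive $O(h)$ estimate amounts to, and it suffices for the claimed $O(h^{3/2})$.)
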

In these estimates we used the fact $\sigma_{\min}(\Gamma V)>0$.
Note that the constants hidden in the big-O notation in this theorem
depend both on $N$ and the chosen basis. For the restriction $\tilde{\mathcal{R}}$
of $\mathcal{R}$ onto $\ker\mathcal{C}$ we obtain, obviously,
\begin{align*}
\lVert\tilde{\mathcal{R}}\rVert & \leq\lVert\mathcal{R}\rVert,\quad\lVert\tilde{\mathcal{R}}^{-1}\rVert\leq\lVert\mathcal{R}^{-1}\rVert.
\end{align*}

For some special cases, the singular values can be easily derived.
\begin{prop}
\label{RemSig}Let $V$, $\bar{V}$, and $\mathring{V}$ be given
by (\ref{eq:Vb})-(\ref{eq:V0}) and $\Gamma$, $\bar{\Gamma}$ by
(\ref{eq:Gam}). Then it holds:
\begin{description}
\item [{{\rm (1)}}] Let $p_{0},\ldots,p_{N-1}$ be an orthogonal basis
in $L^{2}(0,1)$. Then
\begin{align*}
\sigma_{\min}(\Gamma V) & =\min\left\{ \lVert p_{\alpha}\|_{L^{2}(0,1)}:\alpha=0,\ldots,N-1\right\} ,\\
\sigma_{\max}(\Gamma V) & =\max\left\{ \lVert p_{\alpha}\|_{L^{2}(0,1)}:\alpha=0,\ldots,N-1\right\} .
\end{align*}
In particular, if $p_{0},\ldots,p_{N-1}$ is the Legendre basis, $\sigma_{\min}(\Gamma V)=(2N-1)^{-1/2}$
and $\sigma_{\max}(\Gamma V)=1$.
\item [{{\rm (2)}}] For an orthonormal basis $p_{0},\ldots,p_{N-1}$ in
$L^{2}(0,1)$, $\sigma_{\min}(\Gamma V)=\sigma_{\max}(\Gamma V)=1$.
\item [{{\rm (3)}}] If $p_{0},\ldots,p_{N-1}$ is the modified Legendre
basis, it holds $\sigma_{\min}(\bar{\Gamma}\bar{V})\geq(2N+1)^{-1/2}$
and $\sigma_{\max}(\bar{\Gamma}\bar{V})\leq(N+2)^{1/2}$. Furthermore,
the estimates
\[
\sigma_{\min}(\Gamma V)\geq\left(\frac{1}{2-2\cos\frac{N}{N+2}\pi}\right)^{1/2}\geq\frac{1}{2},\quad\sigma_{\max}(\Gamma V)\leq\left(\frac{2N-1}{2-2\cos\frac{1}{N+2}\pi}\right)^{1/2}
\]
hold true.
\end{description}
\end{prop}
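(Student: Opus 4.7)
The common thread is the identification, already used in the proof of Proposition~\ref{prop:The-singular-values}, of $V^{T}\Gamma^{2}V$ and $\bar V^{T}\bar\Gamma^{2}\bar V$ with the Gram matrices of $\{p_{0},\ldots,p_{N-1}\}$ and $\{\bar p_{0},\ldots,\bar p_{N}\}$ in $L^{2}(0,1)$, so that the squared singular values of $\Gamma V$ and $\bar\Gamma\bar V$ are precisely the eigenvalues of these Gram matrices. Under this reduction, part \textbf{(1)} is immediate: orthogonality of $\{p_{\alpha}\}$ makes $V^{T}\Gamma^{2}V$ diagonal with entries $\|p_{\alpha}\|_{L^{2}(0,1)}^{2}$, so the singular values of $\Gamma V$ are exactly the norms $\|p_{\alpha}\|$, and the Legendre specialisation uses the standard identity $\|P_{\alpha}(2\tau-1)\|_{L^{2}(0,1)}^{2}=(2\alpha+1)^{-1}$ to read off the extremes at $\alpha=0$ and $\alpha=N-1$. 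Part \textbf{(2)} is the orthonormal case, with every diagonal entry equal to one.

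For the first two bounds in part \textbf{(3)} I would compute the Gram matrix of the modified Legendre basis explicitly. From $\bar p_{0}=1$, $\bar p_{i}=P_{i}(2\tau-1)-(-1)^{i}$ for $i\geq 1$, together with Legendre orthogonality and $\int_{0}^{1}P_{i}(2\tau-1)\,\dt\tau=0$ for $i\geq 1$, one obtains the rank-one perturbation structure
\[
\bar V^{T}\bar\Gamma^{2}\bar V = D + ee^{T},\qquad D=\diag\bigl(0,\tfrac{1}{3},\tfrac{1}{5},\ldots,\tfrac{1}{2N+1}\bigr),
\]
with $e\in\R^{N+1}$ having entries $\pm 1$, hence $|e|^{2}=N+1$. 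Weyl's inequality then gives $\lambda_{\max}(D+ee^{T})\leq\lambda_{\max}(D)+|e|^{2}=\tfrac{1}{3}+(N+1)\leq N+2$, which is the desired estimate $\sigma_{\max}(\bar\Gamma\bar V)\leq(N+2)^{1/2}$. For the lower bound I would exploit the block factorisation
\[
\bar V^{T}\bar\Gamma^{2}\bar V = L\,\diag(1,\Delta)\,L^{T},\qquad L=\begin{pmatrix}1 & 0\\ b & I\end{pmatrix},\qquad \Delta=\diag\bigl(\tfrac{1}{3},\ldots,\tfrac{1}{2N+1}\bigr),
\]
in which the Schur complement of the $(0,0)$-entry equals \emph{exactly} $\Delta$, because $bb^{T}$ cancels against the rank-one term in the $N\times N$ tail; Ostrowski's inertia theorem then controls $\lambda_{\min}$ of the Gram matrix in terms of $\min_{i}\Delta_{ii}=(2N+1)^{-1}$ and the conditioning of $L$.

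The companion estimates for $\sigma_{\min/\max}(\Gamma V)$ concern the Gram matrix of $\{p_{0},\ldots,p_{N-1}\}$ with $p_{i}=\bar p'_{i+1}=2P'_{i+1}(2\tau-1)$. Using the classical identity $(2k+1)P_{k}=P'_{k+1}-P'_{k-1}$ together with Legendre orthogonality, the matrix $V^{T}\Gamma^{2}V$ decouples, according to the parity of the indices, into two blocks whose inverses turn out to be tridiagonal with constant off-diagonals. Comparing each block with the reference matrix $\operatorname{tridiag}(-1,2,-1)$, whose eigenvalues admit the closed form $2-2\cos(j\pi/(N+2))$, yields the trigonometric bounds stated in the proposition, and the final estimate $\sigma_{\min}(\Gamma V)\geq\tfrac{1}{2}$ follows by monotonicity of $2-2\cos$ on $[0,\pi]$.

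The subtle step in the plan is the lower bound on $\sigma_{\min}(\bar\Gamma\bar V)$. A crude Weyl argument is useless here because the rank-one perturbation $ee^{T}$ has norm of order $N$, comparable with the inverse of the smallest nontrivial diagonal entry of $D$, so the two effects can cancel on the wrong eigenvector. The identification of the Schur complement as \emph{exactly} $\Delta$ is the algebraic miracle that absorbs this competition, and extracting the quantitative constant via Ostrowski is the step I expect to need the most care.
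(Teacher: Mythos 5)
Parts (1) and (2), and the upper bound $\sigma_{\max}(\bar\Gamma\bar V)\leq(N+2)^{1/2}$, are correct and coincide with the paper's argument (Gram-matrix identification, rank-one perturbation, Weyl). The genuine gap is the lower bound $\sigma_{\min}(\bar\Gamma\bar V)\geq(2N+1)^{-1/2}$. Your computation of the Gram matrix is right: the decomposition is $G=\diag(0,\tfrac13,\ldots,(2N+1)^{-1})+ee^{T}$, with first diagonal entry $0$ because $\int_{0}^{1}\bar p_{0}^{2}=1=(ee^{T})_{11}$, and you are also right that Weyl then yields only $\lambda_{\min}(G)\geq0$. (The paper's own proof writes the diagonal as $\diag(1,\tfrac13,\ldots,(2N+1)^{-1})$ and applies Weyl; that first entry is miscomputed, so you have in effect exposed a flaw in the published argument rather than found a route around it.) But your Schur/Ostrowski repair cannot deliver the constant either: Ostrowski gives $\lambda_{\min}(G)\geq\sigma_{\min}(L)^{2}\,\lambda_{\min}(\diag(1,\Delta))=\sigma_{\min}(L)^{2}(2N+1)^{-1}$, and since the first column of $L$ carries the vector $b$ with $\lvert b\rvert^{2}=N$, one computes $\sigma_{\min}(L)^{2}=1+\tfrac12\bigl(N-\sqrt{N^{2}+4N}\bigr)=O(1/N)$; the exact cancellation in the Schur complement does not offset the ill-conditioning of $L$. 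Moreover, no repair is possible, because the stated inequality is false: the coefficient vector of $P_{\alpha}(2\tau-1)=\bar p_{\alpha}+(-1)^{\alpha}\bar p_{0}$ has squared Euclidean norm $2$ while the function has squared $L^{2}(0,1)$-norm $(2\alpha+1)^{-1}$, so $\lambda_{\min}(G)\leq\tfrac12(2N+1)^{-1}<(2N+1)^{-1}$ for every $N\geq1$; concretely, for $N=1$, $G=\begin{pmatrix}1&1\\1&4/3\end{pmatrix}$ and $\lambda_{\min}(G)=(7-\sqrt{37})/6\approx0.153<1/3$.

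For the bounds on $\sigma_{\min}(\Gamma V)$ and $\sigma_{\max}(\Gamma V)$ the paper does not argue from scratch: it identifies $K=\mathring V^{T}\bar\Gamma^{2}\mathring V$ with the stiffness matrix of the basis, quotes the eigenvalue bounds from the proof of Proposition A.2 of \cite{HMTWW}, and observes that $V^{T}\Gamma^{2}V$ is $K$ with its zero first row and column removed. Your parity-decoupling and tridiagonal-comparison sketch is plausibly a reconstruction of that cited argument, but the decisive steps --- that the parity blocks have tridiagonal inverses with constant off-diagonals and that comparison with $\operatorname{tridiag}(-1,2,-1)$ produces exactly the constants $2-2\cos\frac{N}{N+2}\pi$ and $\frac{2N-1}{2-2\cos\frac{1}{N+2}\pi}$ --- are asserted, not proved, so as written this portion is a plan rather than a proof.
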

\begin{proof}
First, we observe that $(V^{T}\Gamma^{2}V)_{\alpha\beta}=\int_{0}^{1}p_{\alpha-1}(\rho)p_{\beta-1}(\rho)\dt\rho=\delta_{\alpha\beta}\lVert p_{\alpha-1}\rVert_{L^{2}(a,b)}^{2}$.
This provides (1) and (2) as special cases. 

Consider the modified Legendre basis now. It holds $\int_{0}^{1}\bar{p}_{0}^{2}(\rho)\dt\rho=1$
and $\int_{0}^{1}\bar{p}_{0}(\rho)\bar{p}_{\alpha}(\rho)\dt\rho=\int_{0}^{1}(P_{\alpha}(2\rho-1)-(-1)^{\alpha})\dt\rho=(-1)^{\alpha+1}$
for $\alpha=1,2,\ldots$. Moreover, for $\alpha,\beta=1,2,\ldots,$
we have
\begin{align*}
\int_{0}^{1}\bar{p}_{\alpha}(\rho)\bar{p}_{\beta}(\rho)\dt\rho & =\int_{0}^{1}(P_{\alpha}(2\rho-1)-(-1)^{\alpha})(P_{\beta}(2\rho-1)-(-1)^{\beta})\dt\rho\\
 & =\int_{0}^{1}P_{\alpha}(2\rho-1)P_{\beta}(2\rho-1)\dt\rho+(-1)^{\alpha+\beta}\\
 & =(2\alpha+1)^{-1}\delta_{\alpha\beta}+(-1)^{\alpha+\beta}.
\end{align*}
Collecting these expressions, we obtain the compact representation
\[
\bar{V}^{T}\bar{\Gamma}^{2}\bar{V}=\diag(1,\frac{1}{3},\ldots,(2N+1)^{-1})+ff^{T}
\]
 with $f^{T}=[1,1,-1,+1,-1,\ldots,\pm1]\in\R^{N+1}$. $ff^{T}$ is
a rank-1 matrix having, therefore, the $N$-fold eigenvalue 0. Moreover,
$f$ is an eigenvector to the eigenvalue $f^{T}f=N+1$. In particular,
$ff^{T}$ is positive semidefinite. Invoking Weyl's theorem again,
we obtain 
\begin{align*}
(2N+1)^{-1} & =\lambda_{\min}(\diag(1,\frac{1}{3},\ldots,(2N+1)^{-1}))\leq\lambda_{\min}(\bar{V}^{T}\bar{\Gamma}^{2}\bar{V})\\
\lambda_{\max}(\bar{V}^{T}\bar{\Gamma}^{2}\bar{V}) & \leq\lambda_{\max}(\diag(1,\frac{1}{3},\ldots,(2N+1)^{-1}))+\lambda_{\max}(ff^{T})=N+2.
\end{align*}
This proves the first assertion of (3).

The relation $(\mathring{V}^{T}\bar{\Gamma}^{2}\mathring{V})_{\alpha\beta}=\int_{0}^{1}\bar{p}'_{\alpha-1}\bar{p}'_{\beta-1}(\sigma){\rm d\sigma}$
shows that $K=\mathring{V}^{T}\bar{\Gamma}^{2}\mathring{V}$ is the
stiffness matrix of the basis functions. For the modified Legendre
basis, it has been investigated in \cite[cp Eq. (31)]{HMTWW}. According
to the proof of Proposition A.2 of \cite{HMTWW}, the nonvanishing
eigenvalues can be estimated by\footnote{In \cite{HMTWW}, the stiffness matrix is scaled on the interval $(-1,1)$
in contrast to the interval $(0,1)$ used here. Therefore, an additional
factor of $1/2$ appears in the present estimations.}
\[
\lambda_{\min}(K)\geq\frac{1}{2-2\cos\frac{N}{N+2}\pi},\quad\lambda_{\max}(K)\leq\frac{2N-1}{2-2\cos\frac{1}{N+2}\pi}.
\]
$K'=V^{T}\Gamma^{2}V$ is the submatrix of $K$ obtained by omitting
the first row and column of $K$, which consist entirely of zeros.
This provides the final relations of assertion (3).
\end{proof}
An asymptotic analysis shows that $\sigma_{\max}(\Gamma V)\leq\frac{2}{\sqrt{\pi}}N^{3/2}+O(N^{1/2})$
in the case of the modified Legendre basis.
\begin{rem}
We are able to estimate the size of the jump of elements of $\tilde{X}_{\pi}$
at the grid points. For any $\tilde{x}\in\tilde{X}_{\pi}$ and $\tilde{c}=\mathcal{R}^{-1}\tilde{x}$,
it holds
\[
\lVert\tilde{x}_{j\kappa}\lVert_{C[t_{j-1},t_{j})}\leq C_{h_{j}}\lVert\tilde{x}_{j\kappa}\rVert_{H^{1}(t_{j-1},t_{j})}=C_{h_{j}}h_{j}^{1/2}\lvert U_{j,\textrm{red}}\tilde{c}_{j\kappa}\rvert\leq C_{h_{j}}h_{j}^{1/2}\sigma_{\max}(U_{j,\textrm{red}})\lvert\tilde{c}\rvert
\]
with $C_{h_{j}}=\left(\max\{2/h_{j},h_{j}\}\right)^{1/2}$. Here,
we used \cite[Lemma 3.2]{HMinival}. For sufficiently small $h_{j}$,
this estimate reduces to
\[
\lVert\tilde{x}_{j\kappa}\lVert_{C[t_{j-1},t_{j})}\leq\sqrt{2}\sigma_{\max}(\bar{\Gamma}\mathring{V})\lvert\tilde{c}\rvert=\sqrt{2}\sigma_{\max}(\Gamma V)\lvert\tilde{c}\rvert.
\]
Let $x$ be any element of $X_{\pi}$ and $c=\mathcal{R}^{-1}x$.
Replacing $\tilde{c}$ by $\Delta c=\tilde{c}-c$ in the last estimate
we obtain
\begin{align*}
\lvert\tilde{x}_{\kappa}(t_{j-0})-\tilde{x}_{\kappa}(t_{j+0})\rvert & =\lvert\tilde{x}_{\kappa}(t_{j-0})-x_{\kappa}(t_{j-0})+x_{\kappa}(t_{j+0})-\tilde{x}_{\kappa}(t_{j+0})\rvert\\
 & \leq\lVert\tilde{x}_{\kappa j}-x_{\kappa j}\rVert_{C[t_{j-1},t_{j})}+\lVert\tilde{x}_{\kappa,j+1}-x_{\pi,\kappa,j+1}\rVert_{C[t_{j},t_{j+1})}\\
 & \leq2\sqrt{2}\sigma_{\max}(\Gamma V)\lvert\Delta c\rvert.
\end{align*}
Proposition~\ref{RemSig} provides estimations for the factor $\sigma_{\max}(\Gamma V)$.
In particular, for some bases, it does not depend on the polynomial
degree $N$.\qed
\end{rem}

\section{Error estimation for the constrained minimization problem}

The aim of this section is the derivation of bounds for perturbations
of the solution $c$ for the problem (\ref{eq:discmin})-(\ref{eq:discconstraint}),
that is,
\[
\begin{gathered}\varphi(c)=\lvert\mathcal{A}c-r\rvert^{2}\rightarrow\min!\\
\textrm{such that }\mathcal{C}c=0,
\end{gathered}
\]
 under perturbation of the data $\mathcal{A}$, $\mathcal{C}$, $r$.
Such bounds are know for a long time, e.g., \cite{Elden80,CoxHig99}.
However, we will provide different bounds in this section. The reason
for this is that the constraint $\mathcal{C}c=0$ has an exceptional
meaning in the present context: It holds $\mathcal{C}c=0$ if and
only if $\mathcal{R}c\in H_{D}^{1}(a,b)$. If a perturbation $\Delta\mathcal{C}$
of $\mathcal{C}$ changes the kernel of $\mathcal{C}$, it does no
longer hold $\mathcal{R}c\in H_{D}^{1}(a,b)$ in general! Therefore,
we will consider the two cases $\ker(\mathcal{C}+\Delta\mathcal{C})=\ker\mathcal{C}$
and $\ker(\mathcal{C}+\Delta\mathcal{C})\neq\ker\mathcal{C}$ separately.

Let $\tilde{c}$ the solution of the perturbed problem
\begin{equation}
\min\{(\mathcal{A}+\Delta\mathcal{A})z-(r+\Delta r)\vert(\mathcal{C}+\Delta\mathcal{C})z=0\}.\label{eq:pertmini}
\end{equation}
Then, let $\Delta c=c-\tilde{c}$ denote the error. We are interested
in deriving an error bound on $\Delta c$ in terms of the perturbations
of the data.

Let, for a matrix $\mathcal{M}$, denote the
Moore-Penrose inverse by $\mathcal{M}^{+}$. Moreover, let $\lVert\mathcal{M}\rVert$ be
its spectral norm.

Let $\mathcal{D}$ be an orthonormal basis of $\ker\mathcal{C}$.
Then, $P=I_{n(mN+k)}-\mathcal{C}^{+}\mathcal{C}$ is the orthogonal
projector onto $\ker\mathcal{C}$ and $P\mathcal{D}=\mathcal{D}$.
Some more properties are collected in the following proposition.
\begin{prop}
\label{prop:DP}It holds, for any matrix $\mathcal{M}\in\R^{\nu\times n(mN+k)},$
$\nu\in\Natu$,
\begin{enumerate}
\item $\mathcal{D}^{T}\mathcal{D}=I_{nmN+k}$ and $\mathcal{D}\mathcal{D}^{T}=P$.
\item If $c=\mathcal{D}d,$ then $\lvert c\rvert=\lvert d\rvert$.
\item $\lVert\mathcal{A}\mathcal{D}\rVert=\lVert\mathcal{A}P\rVert$.
\item $(\mathcal{A}P)^{+}=\mathcal{D}(\mathcal{A}\mathcal{D})^{+}$.
\item $\lVert(\mathcal{A}P)^{+}\rVert=\lVert(\mathcal{A}\mathcal{D})^{+}\rVert$.
\end{enumerate}
\end{prop}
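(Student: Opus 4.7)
The proof is essentially a bookkeeping exercise; my plan is to dispose of the first two items immediately and then leverage them for the rest.

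First I would observe that since $\mathcal{D}$ consists of orthonormal columns spanning $\ker\mathcal{C}$, the product $\mathcal{D}^{T}\mathcal{D}$ is the Gram matrix of these columns, hence $I_{nmN+k}$. For $\mathcal{D}\mathcal{D}^{T}$, this is the standard formula for the orthogonal projector onto $\operatorname{im}\mathcal{D}=\ker\mathcal{C}$, which coincides with $P=I-\mathcal{C}^{+}\mathcal{C}$. Part (2) is then an immediate corollary: $\lvert c\rvert^{2}=c^{T}c=d^{T}\mathcal{D}^{T}\mathcal{D}d=\lvert d\rvert^{2}$.

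For part (3), I would write $\mathcal{A}P=\mathcal{A}\mathcal{D}\mathcal{D}^{T}$ using (1). Since $\mathcal{D}^{T}$ is a contraction (it has orthonormal rows on the space it acts upon nontrivially), $\lVert\mathcal{A}P\rVert\leq\lVert\mathcal{A}\mathcal{D}\rVert\,\lVert\mathcal{D}^{T}\rVert=\lVert\mathcal{A}\mathcal{D}\rVert$. For the reverse inequality, for unit $d$, set $x=\mathcal{D}d$; by (2) $\lvert x\rvert=1$ and $Px=\mathcal{D}\mathcal{D}^{T}\mathcal{D}d=\mathcal{D}d$, so $\mathcal{A}\mathcal{D}d=\mathcal{A}Px$, giving $\lVert\mathcal{A}\mathcal{D}\rVert\leq\lVert\mathcal{A}P\rVert$.

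Part (4) is the main technical point and I would prove it by verifying the four Moore--Penrose conditions for the candidate $G:=\mathcal{D}(\mathcal{A}\mathcal{D})^{+}$ against $M:=\mathcal{A}P$. The key identity I will use repeatedly is $P\mathcal{D}=\mathcal{D}$ (from (1), since $\mathcal{D}\mathcal{D}^{T}\mathcal{D}=\mathcal{D}\cdot I=\mathcal{D}$), together with $\mathcal{A}P=\mathcal{A}\mathcal{D}\mathcal{D}^{T}$. Then $MGM=\mathcal{A}\mathcal{D}\mathcal{D}^{T}\mathcal{D}(\mathcal{A}\mathcal{D})^{+}\mathcal{A}\mathcal{D}\mathcal{D}^{T}=\mathcal{A}\mathcal{D}(\mathcal{A}\mathcal{D})^{+}\mathcal{A}\mathcal{D}\mathcal{D}^{T}=\mathcal{A}\mathcal{D}\mathcal{D}^{T}=M$; $GMG=\mathcal{D}(\mathcal{A}\mathcal{D})^{+}\mathcal{A}\mathcal{D}(\mathcal{A}\mathcal{D})^{+}=\mathcal{D}(\mathcal{A}\mathcal{D})^{+}=G$; $MG=\mathcal{A}\mathcal{D}(\mathcal{A}\mathcal{D})^{+}$ is symmetric by the Penrose property of $(\mathcal{A}\mathcal{D})^{+}$; and $GM=\mathcal{D}\bigl[(\mathcal{A}\mathcal{D})^{+}\mathcal{A}\mathcal{D}\bigr]\mathcal{D}^{T}$ is symmetric because the bracketed matrix is symmetric, so sandwiching by $\mathcal{D}$ and $\mathcal{D}^{T}$ preserves symmetry. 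By uniqueness of the Moore--Penrose inverse this gives $(\mathcal{A}P)^{+}=G$. The only thing that could go mildly wrong here is forgetting that $\mathcal{A}\mathcal{D}$ may be rank-deficient, but the Penrose conditions hold regardless of rank, so the argument goes through unchanged.

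Finally, for (5), since $\mathcal{D}$ has orthonormal columns, $\lvert\mathcal{D}y\rvert=\lvert y\rvert$ for every vector $y$, so for any input $z$ one has $\lvert(\mathcal{A}P)^{+}z\rvert=\lvert\mathcal{D}(\mathcal{A}\mathcal{D})^{+}z\rvert=\lvert(\mathcal{A}\mathcal{D})^{+}z\rvert$ using (4), and taking the supremum over unit $z$ yields the claimed equality of spectral norms.
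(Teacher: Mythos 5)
Your proof is correct; the paper itself omits the argument entirely (``The proofs are obvious''), and what you supply is exactly the routine verification intended: the Gram-matrix/projector identities for $\mathcal{D}$, the norm comparison via $\mathcal{A}P=\mathcal{A}\mathcal{D}\mathcal{D}^{T}$, and the check of the four Penrose conditions for $\mathcal{D}(\mathcal{A}\mathcal{D})^{+}$, which as you note works regardless of the rank of $\mathcal{A}\mathcal{D}$.
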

The proofs are obvious. For the following we note that the matrix
$\mathcal{A}\mathcal{D}$ has full column rank \cite[Proposition 1]{HM_Part2}.

\subsection{$\ker(\mathcal{C}+\Delta\mathcal{C})=\ker\mathcal{C}$}

Each element $c$ of $\ker\mathcal{C}$ has a unique representation
$c=\mathcal{D}d$ with $d\in\R^{nmN+k}$. Therefore, (\ref{eq:discmin})-(\ref{eq:discconstraint})
is equivalent to the unconstrained minimization problem
\begin{equation}
\min_{d\in\R^{nmN+k}}\lVert\mathcal{A}\mathcal{D}d-r\rVert\label{eq:proj}
\end{equation}
while (\ref{eq:pertmini}) becomes the unconstrained minimization
problem
\begin{equation}
\min_{d\in\R^{nmN+k}}\lVert(\mathcal{A}+\Delta\mathcal{A})\mathcal{D}d-(r+\Delta r)\rVert.\label{eq:projpert}
\end{equation}
Since $\mathcal{A}\mathcal{D}$ has full column rank, standard perturbation
results for unconstrained least squares problems apply. As a consequence
of \cite[Satz 8.2.7]{KS88} and Proposition~\ref{prop:DP} we obtain
\begin{thm}
\label{Theo:constant-nullspace}Let $\omega=\lVert(\mathcal{A}P)^{+}\rVert\lVert\Delta\mathcal{A}P\rVert<1$.
Then it holds
\[
\lvert\Delta c\rvert\leq\frac{\lVert(\mathcal{A}P)^{+}\rVert}{1-\omega}\left\{ \lVert\Delta\mathcal{A}P\rVert\left[\lvert c\rvert+\lVert(\mathcal{A}P)^{+}\rVert\lvert\mathfrak{r}\rvert+\lvert\Delta r\rvert\right]\right\} 
\]
and
\begin{align*}
\frac{\lvert\Delta c\rvert}{\lvert c\rvert} & \leq\frac{1}{1-\omega}\left\{ \left[\kappa_{\mathcal{C}}(\mathcal{A})+\frac{\lvert\mathfrak{r}\rvert}{\lVert\mathcal{A}P\rVert\lvert c\rvert}\kappa_{\mathcal{C}}(\mathcal{A})^{2}\right]\frac{\lVert\Delta\mathcal{A}P\rVert}{\lVert\mathcal{A}P\rVert}\right.\\
 & \left.+\frac{\lVert(\mathcal{A}P)^{+}\rVert\lvert r\rvert}{\lvert c\rvert}\cdot\frac{\lvert\Delta r\rvert}{\lvert r\rvert}\right\} .
\end{align*}
Here, $\mathfrak{r}=r-\mathcal{A}c$ and 
\[
\kappa_{\mathcal{C}}(\mathcal{A})=\lVert\mathcal{A}P\rVert\lVert(\mathcal{A}P)^{+}\rVert.
\]
\end{thm}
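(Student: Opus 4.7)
The plan is to exploit the hypothesis $\ker(\mathcal{C}+\Delta\mathcal{C}) = \ker\mathcal{C}$, which means both the original and the perturbed minimization problems live on the same linear subspace $\ker\mathcal{C} = \im\mathcal{D}$. Substituting $c = \mathcal{D}d$ and $\tilde{c} = \mathcal{D}\tilde{d}$ turns \eqref{eq:discmin}--\eqref{eq:discconstraint} and \eqref{eq:pertmini} into the pair of \emph{unconstrained} least-squares problems \eqref{eq:proj} and \eqref{eq:projpert}. Since $\mathcal{A}\mathcal{D}$ has full column rank (cited just below Proposition~\ref{prop:DP}), both reduced problems have unique solutions $d$ and $\tilde{d}$, and by Proposition~\ref{prop:DP}(2) we have $|\Delta c| = |\mathcal{D}\Delta d| = |\Delta d|$. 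So the entire estimate reduces to bounding $|\Delta d|$.

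Next I would apply the classical perturbation theorem for unconstrained linear least-squares problems (e.g., \cite[Satz 8.2.7]{KS88}) to the reduced system with coefficient matrix $\mathcal{A}\mathcal{D}$, right-hand side $r$, and perturbations $\Delta\mathcal{A}\mathcal{D}$ and $\Delta r$. That result yields, under the smallness assumption $\|(\mathcal{A}\mathcal{D})^+\| \|\Delta\mathcal{A}\mathcal{D}\| < 1$, a bound of the form
\[
|\Delta d| \leq \frac{\|(\mathcal{A}\mathcal{D})^+\|}{1-\|(\mathcal{A}\mathcal{D})^+\|\|\Delta\mathcal{A}\mathcal{D}\|}\Bigl\{\|\Delta\mathcal{A}\mathcal{D}\|\bigl[|d| + \|(\mathcal{A}\mathcal{D})^+\||\mathfrak{r}|\bigr] + |\Delta r|\Bigr\},
\]
with residual $\mathfrak{r} = r - \mathcal{A}\mathcal{D}d = r - \mathcal{A}c$ (the same $\mathfrak{r}$ appearing in the statement, since residuals are preserved under the substitution).

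The final step is a purely notational translation. By Proposition~\ref{prop:DP}(3),(5), $\|\mathcal{A}\mathcal{D}\| = \|\mathcal{A}P\|$ and $\|(\mathcal{A}\mathcal{D})^+\| = \|(\mathcal{A}P)^+\|$, and analogously $\|\Delta\mathcal{A}\mathcal{D}\| = \|\Delta\mathcal{A}P\|$; combined with $|\Delta c| = |\Delta d|$ and $|c| = |d|$, this produces exactly the absolute bound claimed in the theorem, with $\omega = \|(\mathcal{A}P)^+\|\|\Delta\mathcal{A}P\|$. The relative bound is then obtained by dividing by $|c|$ and inserting the identity $\kappa_{\mathcal{C}}(\mathcal{A}) = \|\mathcal{A}P\|\|(\mathcal{A}P)^+\|$ at the appropriate places: one factor of $\kappa_{\mathcal{C}}(\mathcal{A})$ arises from $\|\Delta\mathcal{A}P\|/\|\mathcal{A}P\|$, and the squared factor comes from the $\|(\mathcal{A}P)^+\|^2\|\Delta\mathcal{A}P\||\mathfrak{r}|$ term once an extra factor $\|\mathcal{A}P\|/\|\mathcal{A}P\|$ is inserted.

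I do not expect a serious obstacle; the work is essentially verifying that a known perturbation theorem transfers cleanly through the parametrization $c = \mathcal{D}d$. The only point that needs care is ensuring that $\|\Delta\mathcal{A}\mathcal{D}\| = \|\Delta\mathcal{A}P\|$ and that the residual $\mathfrak{r}$ is the same in both formulations, both of which are immediate from $\mathcal{D}\mathcal{D}^T = P$ and $c \in \im\mathcal{D}$; after that, the proof is a bookkeeping exercise.
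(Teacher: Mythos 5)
Your proposal is correct and follows essentially the same route as the paper: the paper likewise reduces the constrained problem to the unconstrained ones \eqref{eq:proj} and \eqref{eq:projpert} via $c=\mathcal{D}d$, invokes the full column rank of $\mathcal{A}\mathcal{D}$ together with the classical perturbation result \cite[Satz 8.2.7]{KS88}, and translates back using Proposition~\ref{prop:DP}. The only remark worth making is that the grouping $\lVert\Delta\mathcal{A}P\rVert\left[\lvert c\rvert+\lVert(\mathcal{A}P)^{+}\rVert\lvert\mathfrak{r}\rvert\right]+\lvert\Delta r\rvert$ in your bound is the standard (and intended) one, consistent with \eqref{eq:e1} later in the paper.
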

Theorem~\ref{Theo:constant-nullspace} corresponds to classical results
for unconstrained minimization problems (e.g., \cite{We73}, \cite[Theorem 9.12]{LaHa74}
and is a small generalization of them. Let us emphasize that the estimation
is independent of the perturbations of $\mathcal{C}$ as long as the
nullspace of $\mathcal{C}$ is not changed by the perturbation.
\begin{rem}
In the case of the Legendre basis, the elements of $\mathcal{C}$
consist only of three nonzero elements being equal to 1 and -1, respectively,
possibly scaled by the stepsizes, cf.~(\ref{eq:barPj}), (\ref{eq:barPj2}).
So we expect $\Delta\mathcal{C}=0$ such that the estimates of this
section apply.\qed
\end{rem}

\subsection{$\ker(\mathcal{C}+\Delta\mathcal{C})\protect\neq\ker\mathcal{C}$ }

The estimation of the error becomes much more involved than in the
previous case. In a first step, we will construct a basis for the
kernel of the perturbed constraint $(\mathcal{C}+\Delta\mathcal{C})z=0$.
\begin{lem}
\label{Lemma1}Let $\varkappa=\lVert\mathcal{C}^{+}\rVert\lVert\Delta\mathcal{C}\rVert<1/2.$
Then $\mathcal{C}+\Delta\mathcal{C}$ has full rank and $P_{\Delta}=I_{n(mN+k)}-(\mathcal{C}+\Delta\mathcal{C})^{+}(\mathcal{C}+\Delta\mathcal{C})$
is a projector onto $\ker(\mathcal{C}+\Delta\mathcal{C})$. Furthermore,
$\mathcal{D}_{\Delta}=P_{\Delta}\mathcal{D}$ is a basis of $\ker(\mathcal{C}+\Delta\mathcal{C})$.
Moreover, the estimates
\[
\lVert(\mathcal{C}+\Delta\mathcal{C})^{+}\rVert\leq\frac{\lVert\mathcal{C}^{+}\rVert}{1-\varkappa}
\]
and
\[
\lVert(\mathcal{C}+\Delta\mathcal{C})^{+}-\mathcal{C}^{+}\rVert\leq\frac{\sqrt{2}\lVert\mathcal{C}^{+}\rVert^{2}}{1-\varkappa}\lVert\Delta\mathcal{C}\rVert
\]
hold true.
\end{lem}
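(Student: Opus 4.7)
The plan is to address each claim in the order it appears. For full row rank of $\mathcal{C}+\Delta\mathcal{C}$ and the first estimate, I would start from $\lVert\mathcal{C}^{+}\rVert=\sigma_{\min}(\mathcal{C})^{-1}$ (valid since $\mathcal{C}$ has full row rank, as noted after its definition). A standard singular-value perturbation argument via Weyl's theorem yields $\sigma_{\min}(\mathcal{C}+\Delta\mathcal{C})\geq\sigma_{\min}(\mathcal{C})-\lVert\Delta\mathcal{C}\rVert=\sigma_{\min}(\mathcal{C})(1-\varkappa)>0$, which simultaneously gives full row rank of $\mathcal{C}+\Delta\mathcal{C}$ and the bound $\lVert(\mathcal{C}+\Delta\mathcal{C})^{+}\rVert\leq\lVert\mathcal{C}^{+}\rVert/(1-\varkappa)$. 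Full row rank then makes $(\mathcal{C}+\Delta\mathcal{C})^{+}(\mathcal{C}+\Delta\mathcal{C})$ the orthogonal projector onto $\im(\mathcal{C}+\Delta\mathcal{C})^{T}$, so $P_{\Delta}$ is, by definition, the orthogonal projector onto $\ker(\mathcal{C}+\Delta\mathcal{C})$.

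The conceptually central step is showing that $\mathcal{D}_{\Delta}=P_{\Delta}\mathcal{D}$ is a basis of $\ker(\mathcal{C}+\Delta\mathcal{C})$. Both kernels have dimension $nmN+k$, and the columns of $\mathcal{D}_{\Delta}$ lie in $\ker(\mathcal{C}+\Delta\mathcal{C})$ automatically, so it suffices to prove that $v\mapsto P_{\Delta}\mathcal{D}v$ is injective. The key identity is that, since $\mathcal{D}v\in\ker\mathcal{C}$, one has $(\mathcal{C}+\Delta\mathcal{C})\mathcal{D}v=\Delta\mathcal{C}\,\mathcal{D}v$, whence
\[
(I-P_{\Delta})\mathcal{D}v=(\mathcal{C}+\Delta\mathcal{C})^{+}(\mathcal{C}+\Delta\mathcal{C})\mathcal{D}v=(\mathcal{C}+\Delta\mathcal{C})^{+}\Delta\mathcal{C}\,\mathcal{D}v.
\]
Using $\lvert\mathcal{D}v\rvert=\lvert v\rvert$ from Proposition~\ref{prop:DP} together with the first estimate, this produces $\lvert(I-P_{\Delta})\mathcal{D}v\rvert\leq\frac{\varkappa}{1-\varkappa}\lvert v\rvert$. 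This is precisely where the sharper assumption $\varkappa<1/2$ is used: it guarantees $\varkappa/(1-\varkappa)<1$, so the reverse triangle inequality gives $\lvert P_{\Delta}\mathcal{D}v\rvert\geq\frac{1-2\varkappa}{1-\varkappa}\lvert v\rvert>0$ for any $v\neq 0$, establishing injectivity.

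For the second estimate, I would invoke Wedin's perturbation theorem for the Moore-Penrose inverse in the rank-preserving regime, which states that whenever $\rank A=\rank B$,
\[
\lVert B^{+}-A^{+}\rVert\leq\sqrt{2}\,\lVert A^{+}\rVert\,\lVert B^{+}\rVert\,\lVert B-A\rVert.
\]
Applying this with $A=\mathcal{C}$ and $B=\mathcal{C}+\Delta\mathcal{C}$, whose equality of ranks was just established, and substituting the first estimate of the lemma for $\lVert(\mathcal{C}+\Delta\mathcal{C})^{+}\rVert$, the stated bound follows immediately.

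I expect the main obstacle to be pinpointing why the threshold $\varkappa<1/2$ (rather than the weaker $\varkappa<1$ that would suffice for full rank and the pseudoinverse norm bound) is actually needed: the basis property for $\mathcal{D}_{\Delta}$ is the step that dictates this refinement, and setting up the chain of inequalities cleanly requires the identity $(I-P_{\Delta})\mathcal{D}v=(\mathcal{C}+\Delta\mathcal{C})^{+}\Delta\mathcal{C}\,\mathcal{D}v$ noted above. The $\sqrt{2}$ factor in the pseudoinverse difference bound is standard but depends on invoking Wedin's result rather than a direct computation.
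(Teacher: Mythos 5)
Your proof is correct, and its core coincides with the paper's: the decisive identity $(I-P_{\Delta})\mathcal{D}v=(\mathcal{C}+\Delta\mathcal{C})^{+}\Delta\mathcal{C}\,\mathcal{D}v$ (which the paper writes as $(I-P_{\Delta})P=(\mathcal{C}+\Delta\mathcal{C})^{+}\Delta\mathcal{C}(I-\mathcal{C}^{+}\mathcal{C})$) and the resulting bound $\varkappa/(1-\varkappa)<1$ are exactly where the paper, too, spends the hypothesis $\varkappa<1/2$. Where you differ is that you replace the paper's external citations by self-contained arguments: the full-rank claim and the bound on $\lVert(\mathcal{C}+\Delta\mathcal{C})^{+}\rVert$ come from Weyl's singular-value inequality rather than from Satz 8.2.5 of the cited textbook of Kielbasi\'nski and Schwetlick; the fact that $P_{\Delta}$ carries $\ker\mathcal{C}$ bijectively onto $\ker(\mathcal{C}+\Delta\mathcal{C})$ is obtained by a reverse-triangle-inequality injectivity argument plus dimension counting rather than by Kato's Theorem I-6.34 on pairs of projections; and the difference bound comes from Wedin's theorem. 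Each substitution is legitimate and arguably more transparent in this finite-dimensional setting, at the cost of quoting Wedin as a black box where the paper quotes a different black box. One small point deserving care: the constant $\sqrt{2}$ in Wedin's spectral-norm bound is the one valid in the full-rank (acute) case; for general rank-preserving perturbations the constant is $(1+\sqrt{5})/2$. Since you have already established that both $\mathcal{C}$ and $\mathcal{C}+\Delta\mathcal{C}$ have full row rank before invoking the theorem, the $\sqrt{2}$ version does apply, but that hypothesis should be named explicitly when you call on it.
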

\begin{proof}
The proposition of $\mathcal{C}+\Delta\mathcal{C}$ having full rank
as well as the error estimates follow from \cite[Satz 8.2.5]{KS88}.

For showing that $\mathcal{D}_{\Delta}$ is a basis of $\ker(\mathcal{C}+\Delta\mathcal{C})$
consider
\begin{align*}
(I-P_{\Delta})P & =(\mathcal{C}+\Delta\mathcal{C})^{+}(\mathcal{C}+\Delta\mathcal{C})(I-\mathcal{C}^{+}\mathcal{C})\\
 & =(\mathcal{C}+\Delta\mathcal{C})^{+}\Delta\mathcal{C}(I-\mathcal{C}^{+}\mathcal{C}).
\end{align*}
It holds
\[
\lVert(I-P_{\Delta})P\rVert\leq\lVert(\mathcal{C}+\Delta\mathcal{C})^{+}\rVert\lVert\Delta\mathcal{C}\rVert\leq\frac{\lVert\mathcal{C}^{+}\rVert}{1-\varkappa}\lVert\Delta\mathcal{C}\rVert\leq\frac{\varkappa}{1-\varkappa}<1.
\]
Therefore, the assumptions of \cite[Theorem I-6.34]{Kato95} are fulfilled.
Since $\dim\ker(\mathcal{C}+\Delta\mathcal{C})=\dim\ker\mathcal{C},$the
first alternative of that theorem applies and $P_{\Delta}$ is a one-to-one
mapping of $\ker\mathcal{C}$ onto $\ker(\mathcal{C}+\Delta\mathcal{C})$.
Hence, $\mathcal{D}_{\Delta}$ is a basis of the latter space.
\end{proof}
By using the bases $\mathcal{D}$ and $\mathcal{D}_{\Delta}$, the
unperturbed and the perturbed least squares problems become (\ref{eq:proj})
and
\begin{equation}
\min_{d\in\R^{nmN+k}}\lVert(\mathcal{A}+\Delta\mathcal{A})\mathcal{D}_{\Delta}d-(r+\Delta r)\rVert.\label{eq:pertbas}
\end{equation}
In a first step, the deviations of the bases shall be estimated. It
holds
\begin{align*}
P_{\Delta}-P & =\mathcal{C}^{+}\mathcal{C}-(\mathcal{C}+\Delta\mathcal{C})^{+}(\mathcal{C}+\Delta\mathcal{C})\\
 & =\mathcal{C}^{+}\mathcal{C}-(\mathcal{C}+\Delta\mathcal{C})^{+}\mathcal{C}-(\mathcal{C}+\Delta\mathcal{C})^{+}\Delta\mathcal{C}\\
 & =\left[\mathcal{C}^{+}-(\mathcal{C}+\Delta\mathcal{C})^{+}\right]\mathcal{C}-(\mathcal{C}+\Delta\mathcal{C})^{+}\Delta\mathcal{C}.
\end{align*}
Invoking Lemma~\ref{Lemma1} we obtain\footnote{In case that $\ker(\mathcal{C}+\Delta\mathcal{C})=\ker\mathcal{C}$
we obtain $\mathcal{P}_{\Delta}-\mathcal{P}=0$ and $\mathcal{D}_{\Delta}=\mathcal{D}$
such that the present estimations coincide with those of the previous
section.}
\[
\lVert P_{\Delta}-P\rVert\leq\left[\frac{\sqrt{2}\lVert\mathcal{C}^{+}\rVert^{2}}{1-\varkappa}\lVert\mathcal{C}\rVert+\frac{\lVert\mathcal{C}^{+}\rVert}{1-\varkappa}\right]\lVert\Delta\mathcal{C}\rVert=\frac{\lVert\mathcal{C}^{+}\rVert}{1-\varkappa}\left[\sqrt{2}\kappa(\mathcal{C})+1\right]\lVert\Delta\mathcal{C}\rVert
\]
with $\kappa(\mathcal{C})=\lVert\mathcal{C}^{+}\rVert\lVert\mathcal{C}\rVert$.
Consequently, 
\begin{equation}
\lVert\mathcal{D}_{\Delta}-\mathcal{D}\rVert=\lVert(P_{\Delta}-P)\mathcal{D}\rVert\leq\lVert P_{\Delta}-P\rVert\lVert\mathcal{D}\rVert\leq\frac{\lVert\mathcal{C}^{+}\rVert}{1-\varkappa}\left[\sqrt{2}\kappa(\mathcal{C})+1\right]\lVert\Delta\mathcal{C}\rVert.\label{eq:baspert}
\end{equation}
Let us transform (\ref{eq:pertbas}) now. It holds
\begin{align*}
(\mathcal{A}+\Delta\mathcal{A})\mathcal{D}_{\Delta} & =(\mathcal{A}+\Delta\mathcal{A})\mathcal{D}+(\mathcal{A}+\Delta\mathcal{A})(\mathcal{D}_{\Delta}-\mathcal{D})\\
 & =\mathcal{A}\mathcal{D}+\mathfrak{R}
\end{align*}
where $\mathfrak{R}=\Delta\mathcal{A}\mathcal{D}+(\mathcal{A}+\Delta\mathcal{A})(\mathcal{D}_{\Delta}-\mathcal{D})$.
The representation of $\mathfrak{R}$ provides the estimate
\begin{equation}
\lVert\mathfrak{R}\rVert\leq\lVert\Delta\mathcal{A}P\rVert+\lVert\mathcal{A}+\Delta\mathcal{A}\rVert\frac{\lVert\mathcal{C}^{+}\rVert}{1-\varkappa}\left[\sqrt{2}\kappa(\mathcal{C})+1\right]\lVert\Delta\mathcal{C}\rVert.\label{eq:Rest}
\end{equation}
Denote $\omega_{\Delta}=\lVert(\mathcal{A}P)^{+}\rVert\lVert\mathfrak{R}\rVert$.
The condition $\omega_{\Delta}<1$ is obviously fulfilled if 
\begin{equation}
\lVert(\mathcal{A}P)^{+}\rVert\left\{ \lVert\Delta\mathcal{A}P\rVert+\lVert\mathcal{A}+\Delta\mathcal{A}\rVert\frac{\lVert\mathcal{C}^{+}\rVert}{1-\varkappa}\left[\sqrt{2}\kappa(\mathcal{C})+1\right]\lVert\Delta\mathcal{C}\rVert\right\} <1.\label{eq:omegaD}
\end{equation}
Let $d+\Delta d$ be the solution of (\ref{eq:pertbas}). Using the
fact that $\mathcal{A}\mathcal{D}$ has full rank, Theorem 8.2.7 of
\cite{KS88} provides the estimates
\begin{equation}
\lvert\Delta d\rvert\leq\frac{\lVert(\mathcal{A}P)^{+}\rVert}{1-\omega_{\Delta}}\left\{ \lVert\mathfrak{R}\rVert\left[\vert d\rvert+\lVert(\mathcal{A}P)^{+}\rVert\lvert\mathfrak{r}\rvert\right]+\lvert\Delta r\rvert\right\} \label{eq:e1}
\end{equation}
and
\begin{align}
\frac{\lvert\Delta d\rvert}{\lvert d\vert}\leq & \frac{1}{1-\omega_{\Delta}}\left\{ \left[\kappa_{\mathcal{C}}(\mathcal{A})+\frac{\lvert\mathfrak{r}\rvert}{\lVert\mathcal{A}\mathcal{D}\rVert\lvert d\rvert}\kappa_{\mathcal{C}}(\mathcal{A})^{2}\right]\frac{\lVert\mathfrak{R}\rVert}{\lVert\mathcal{A}\mathcal{D}\rVert}\right.\nonumber \\
 & \left.+\frac{\lVert(\mathcal{A}\mathcal{D})^{+}\rVert\lvert r\rvert}{\rvert d\rvert}\cdot\frac{\vert\Delta r\rvert}{\lvert r\rvert}\right\} .\label{eq:e2}
\end{align}
with $\mathfrak{r}=r-\mathcal{A}c$.
\begin{thm}
\label{thm:errest}Let $\lVert\Delta\mathcal{A}\rVert$ and $\lVert\Delta\mathcal{C}\rVert$
be sufficiently small such that (\ref{eq:omegaD}) and $\varkappa=\lVert\mathcal{C}^{+}\lVert\rVert\Delta\mathcal{C}\rVert<1/2$
hold true. Then it holds
\[
\lvert\Delta c\rvert\leq\frac{\lVert(\mathcal{A}P)^{+}\rVert}{1-\omega_{\Delta}}\left\{ \lVert\mathfrak{R}\rVert\left[\lvert c\rvert+\lVert(\mathcal{A}P)^{+}\rVert\lvert\mathfrak{r}\lvert\right]+\lvert\Delta r\rvert\right\} +\frac{\lVert\mathcal{C}^{+}\rVert}{1-\varkappa}\left[\sqrt{2}\kappa(\mathcal{C})+1\right]\lVert\Delta\mathcal{C}\rVert\lvert c\rvert
\]
and
\begin{align*}
\frac{\lvert\Delta c\rvert}{\lvert c\rvert}\leq & \frac{1}{1-\omega_{\Delta}}\left\{ \left[\kappa_{\mathcal{C}}(\mathcal{A})+\frac{\lvert\mathfrak{r}\rvert}{\lVert\mathcal{A}P\rVert\lvert c\rvert}\kappa_{\mathcal{C}}(\mathcal{A})^{2}\right]\frac{\lVert\mathfrak{R}\rVert}{\lVert\mathcal{A}P\rVert}\right.\\
 & \left.+\frac{\lVert(\mathcal{A}P)^{+}\rVert\lvert r\rvert}{\lvert c\rvert}\cdot\frac{\vert\Delta r\rvert}{\lvert r\rvert}\right\} +\frac{\lVert\mathcal{C}^{+}\rVert}{1-\varkappa}\left[\sqrt{2}\kappa(\mathcal{C})+1\right]\lVert\Delta\mathcal{C}\rVert.
\end{align*}
\end{thm}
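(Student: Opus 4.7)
The plan is to reduce the bound on $\lvert\Delta c\rvert$ to the already-derived bound (\ref{eq:e1}) on the coefficient error $\lvert\Delta d\rvert$ in the two unconstrained least-squares problems (\ref{eq:proj}) and (\ref{eq:pertbas}), plus the basis-perturbation estimate (\ref{eq:baspert}). Concretely, I would write the exact solution as $c=\mathcal{D}d$ and the perturbed solution as $\tilde c=\mathcal{D}_{\Delta}\tilde d$, where $d$ minimizes (\ref{eq:proj}) and $\tilde d=d+\Delta d$ minimizes (\ref{eq:pertbas}).

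First I would decompose
\[
\Delta c=\tilde c-c=\mathcal{D}_{\Delta}(\tilde d-d)+(\mathcal{D}_{\Delta}-\mathcal{D})d=\mathcal{D}_{\Delta}\Delta d+(\mathcal{D}_{\Delta}-\mathcal{D})d.
\]
Taking Euclidean norms yields $\lvert\Delta c\rvert\le\lVert\mathcal{D}_{\Delta}\rVert\,\lvert\Delta d\rvert+\lVert\mathcal{D}_{\Delta}-\mathcal{D}\rVert\,\lvert d\rvert$. Next I would observe two cheap facts: since $\mathcal{D}_{\Delta}=P_{\Delta}\mathcal{D}$ with $P_{\Delta}$ an orthogonal projector and $\mathcal{D}$ having orthonormal columns, $\lVert\mathcal{D}_{\Delta}\rVert\le 1$; and by Proposition~\ref{prop:DP}(2), $\lvert d\rvert=\lvert c\rvert$. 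Thus
\[
\lvert\Delta c\rvert\le\lvert\Delta d\rvert+\lVert\mathcal{D}_{\Delta}-\mathcal{D}\rVert\,\lvert c\rvert.
\]

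At this point I would simply plug in the two ingredients that have already been prepared above the statement. Inequality (\ref{eq:e1}), which applies because $\mathcal{A}\mathcal{D}$ has full column rank and the smallness condition $\omega_{\Delta}<1$ is guaranteed by (\ref{eq:omegaD}), bounds $\lvert\Delta d\rvert$ (with $\lvert d\rvert$ replaced by $\lvert c\rvert$); and (\ref{eq:baspert}) bounds $\lVert\mathcal{D}_{\Delta}-\mathcal{D}\rVert$ under the condition $\varkappa<1/2$ from Lemma~\ref{Lemma1}. Substituting both gives exactly the absolute estimate claimed in the theorem. For the relative estimate I would divide the displayed inequality by $\lvert c\rvert$, again use $\lvert d\rvert=\lvert c\rvert$ to rewrite $\lvert\Delta d\rvert/\lvert c\rvert=\lvert\Delta d\rvert/\lvert d\rvert$, and invoke (\ref{eq:e2}) together with (\ref{eq:baspert}).

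I do not foresee a serious obstacle, since all the analytical work (perturbation of the pseudoinverse, construction of the perturbed basis $\mathcal{D}_{\Delta}$, and the classical unconstrained least-squares bounds from \cite{KS88}) has already been carried out before the statement. The only nontrivial conceptual point is the clean splitting $\Delta c=\mathcal{D}_{\Delta}\Delta d+(\mathcal{D}_{\Delta}-\mathcal{D})d$, which isolates the two independent sources of error: the minimisation residual in the new coordinates (handled by (\ref{eq:e1})/(\ref{eq:e2}) with the enlarged residual matrix $\mathfrak{R}$) and the tilt of the nullspace basis (handled by (\ref{eq:baspert})).
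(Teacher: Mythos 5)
Your proposal is correct and follows essentially the same route as the paper: the identical splitting $\Delta c=\mathcal{D}_{\Delta}\Delta d+(\mathcal{D}_{\Delta}-\mathcal{D})d$, the bound $\lVert\mathcal{D}_{\Delta}\rVert\leq1$ together with $\lvert d\rvert=\lvert c\rvert$, and then substitution of (\ref{eq:e1}), (\ref{eq:e2}) and (\ref{eq:baspert}). The paper's proof is exactly this argument, stated more tersely.
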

\begin{proof}
It holds $c=\mathcal{D}d$ and $\Delta c=\mathcal{D}_{\Delta}\Delta d+(\mathcal{D}_{\Delta}-\mathcal{D})d$
such that $\lvert\Delta c\rvert\leq\lvert\Delta d\rvert+\lVert P_{\Delta}-P\rVert\lvert d\rvert$.
Inserting this estimate in (\ref{eq:e1}) and (\ref{eq:e2}) and using
$\lvert c\rvert=\lvert\mathcal{D}d\rvert=\lvert d\rvert$ provides
the claim.
\end{proof}
\begin{rem}
$\lvert\mathfrak{r}\lvert$ is a measure for the accuracy of the discrete
solution. Let $x_{\pi}\in X_{\pi}$ denote the discrete solution obtained
by minimizing $\Phi_{\pi,M}$ (\ref{eq:PhiM}). Its representation
becomes $c=\mathcal{R}^{-1}x_{\pi}$. Then it holds $\lvert\mathfrak{r}\rvert^{2}=\lvert\mathcal{A}c-r\rvert^{2}=\Phi_{\pi,M}(x_{\pi})$.
Hence, $\Phi_{\pi,M}(x_{\pi})\leq2(\Phi_{\pi,M}(x_{\ast})+\Phi_{\pi,M}(x_{\pi}-x_{\ast}))$.
Under the conditions of Theorem~\ref{thm:Convergence}, it holds,
therefore, $\lvert\mathfrak{r}\rvert\leq ch^{N-\mu+1}$.\qed
\end{rem}
The critical quantities to estimate the influence of perturbations
are $\kappa_{\mathcal{C}}(\mathcal{A})$ and $\lVert\mathcal{C}^{+}\rVert$,
$\kappa(\mathcal{C})$ as well as $\lVert(\mathcal{A}P)^{+}\rVert$.
The norms of $\mathcal{C}$ and its pseudoinverse depend only on the
choice of $X_{\pi}$ and the basis chosen for it, but not on the DAE.
It holds $\lVert\mathcal{C}\rVert=\sigma_{\max}(\mathcal{C})$ and
$\lVert\mathcal{C}^{+}\rVert=\sigma_{\min}(\mathcal{C})^{-1}$ with
$\sigma_{\min}(\mathcal{C})$ being the smallest nonvanishing singular
value of $\mathcal{C}$. Since $\mathcal{C}$ has full row rank, $\sigma_{\min}(\mathcal{C})=\left(\lambda_{\min}(\mathcal{C}\mathcal{C}^{T})\right)^{1/2}$
and $\sigma_{\max}(\mathcal{C})=\left(\lambda_{\max}(\mathcal{C}\mathcal{C}^{T})\right)^{1/2}$. 

With $\mathcal{C}$ from (\ref{eq:discconstraint}) we observe that
\[
\mathcal{C}=\Pi_{1}\left[I_{k}\otimes\mathcal{C}_{\textrm{s}}\vert\mathcal{O}_{\textrm{s}}\right]\Pi_{2}
\]
with
\[
\mathcal{C}_{\textrm{s}}=\begin{bmatrix}\bar{\mathcal{P}}_{1}(t_{1}) & -\bar{\mathcal{P}}{}_{2}(t_{1})\\
 & \bar{\mathcal{P}}_{2}(t_{2}) & -\bar{\mathcal{P}_{3}}(t_{2})\\
 &  & \ddots & \ddots\\
 &  &  & \ddots & \ddots\\
 &  &  &  & \bar{\mathcal{P}}_{n-1}(t_{n-1}) & -\bar{\mathcal{P}}_{n}(t_{n-1})
\end{bmatrix}\in\R^{(n-1)\times n(N+1)}
\]
and $\mathcal{O}_{\textrm{s}}\in\R^{k(n-1)\times nN(m-k)}$ consists
entirely of zero elements. The permutation matrices $\Pi_{1}$ and
$\Pi_{2}$ are are constructed as follows: Let $x=[x_{1},x_{2},\ldots,x_{m}]^{T}\in\tilde{X}_{\pi}$.
First, the equations in $\mathcal{C}c=0$ are reordered such that
first all equations related to the first component $x_{1}$, then
those of $x_{2}$, and so on until $x_{k}$ are available. This reordering
is expressed via $\Pi_{1}$. The column permutation $\Pi_{2}$ reorders
the coefficients such that the ones describing the differential components
are taken first, and then the ones belonging to the algebraic components.
In particular, the coefficients $c^{\kappa}$ describing $x_{\kappa}$
are given by $c^{\kappa}=[c_{1\kappa0},c_{1,\kappa1},\ldots,c_{1\kappa N},c_{2\kappa0},\ldots,c_{n\kappa N}]^{T}$.
Then we have
\begin{align}
\mathcal{C}\mathcal{C}^{T} & =\Pi_{1}\left[I_{k}\otimes\mathcal{C}_{\textrm{s}}\vert\mathcal{O}_{\textrm{s}}\right]\Pi_{2}\Pi_{2}^{T}\left[\begin{array}{c}
I_{k}\otimes\mathcal{C}_{\textrm{s}}^{T}\\
\mathcal{O}_{\textrm{s}}^{T}
\end{array}\right]\Pi_{1}^{T}=\Pi_{1}(I_{k}\otimes\mathcal{C_{\textrm{s}}}\mathcal{C}_{\textrm{s}}^{T})\Pi_{1}^{T},\label{eq:CCT-1}
\end{align}
Using (\ref{eq:barPj}) and (\ref{eq:barPj2}), it holds
\[
\mathcal{C}_{\textrm{s}}=C_{\textrm{s}}\diag(h_{1},\ldots,h_{n})
\]
with
\begin{equation}
C_{\textrm{s}}=\left[\begin{array}{cccccc}
f & -e_{1}^{T}\\
 & f & -e_{1}^{T}\\
 &  & \ddots & \ddots\\
\\
 &  &  &  & f & -e_{1}^{T}
\end{array}\right]\label{eq:Cs-1}
\end{equation}
where $e_{1}$ is the first unit vector and $f=[1,\int_{0}^{1}p_{0}(\sigma)\dt\sigma,\ldots,\int_{0}^{1}p_{N-1}(\sigma)\dt\sigma]$.
This leads to
\begin{equation}
\mathcal{C}_{\textrm{s}}\mathcal{C}_{\textrm{s}}^{T}=\left[\begin{array}{ccccc}
h_{1}^{2}\lvert f\rvert^{2}+h_{2}^{2} & -h_{2}^{2}\\
-h_{2}^{2} & h_{2}^{2}\lvert f\rvert^{2}+h_{3}^{2} & -h_{3}^{2}\\
 & \ddots & \ddots & \ddots\\
\\
 &  &  & -h_{n-1}^{2} & h_{n-1}^{2}\lvert f\rvert^{2}+h_{n}^{2}
\end{array}\right].\label{eq:CsCsT-1}
\end{equation}
The eigenvalues of $\mathcal{C}\mathcal{C}^{T}$ are those of (\ref{eq:CsCsT-1}).
For constant stepsize $h$, this reduces to $\mathcal{C}_{\textrm{s}}\mathcal{C}_{\textrm{s}}^{T}=h^{2}C_{s}C_{s}^{T}$,
which is a Toeplitz tridiagonal matrix. In this case, the eigenvalues
of $C_{s}C_{s}^{T}$ are given by \cite[Theorem 2.2]{KuScTs99} 
\begin{equation}
\lambda_{j}=1+\lvert f\rvert^{2}-2\cos\left(\frac{j\pi}{n}\right),\quad j=1,\ldots,n-1.\label{eq:EV}
\end{equation}

\begin{prop}
\label{propCcond}Let the grid (\ref{eq:mesh}) be equidistant with
stepsize $h$, and $C_{s}$ be given by (\ref{eq:Cs-1}). Then it
holds
\begin{itemize}
\item For the Legendre basis $1\leq\lambda_{\min}(C_{s}C_{s}^{T})\leq\lambda_{\max}(C_{s}C_{s}^{T})\leq5$;
\item For the modified Legendre basis $2N\leq\lambda_{\min}(C_{s}C_{s}^{T})\leq\lambda_{\max}(C_{s}C_{s}^{T})\leq 2N+6$;
\item For the Chebyshev basis $1\leq\lambda_{\min}(C_{s}C_{s}^{T})\leq\lambda_{\max}(C_{s}C_{s}^{T})\leq4+2\ln2$.
\item For the Runge-Kutta basis assume additionally that $\int_{0}^{1}p_{i}(\sigma)\dt\sigma\geq0$,
$i=0,1,\ldots,N-1$. Then $N^{-1}\leq\lambda_{\min}(C_{s}C_{s}^{T})\leq\lambda_{\max}(C_{s}C_{s}^{T})\leq5$.
\end{itemize}
\end{prop}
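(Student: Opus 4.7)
The plan is to reduce everything to the closed-form eigenvalue expression (\ref{eq:EV}): for the equidistant grid, $C_s C_s^T$ is a tridiagonal Toeplitz matrix with diagonal $1+|f|^2$ and off-diagonals $-1$, whose $n-1$ eigenvalues are $\lambda_j = 1+|f|^2-2\cos(j\pi/n)$ for $j=1,\ldots,n-1$. Since $-1<\cos(j\pi/n)<1$ for every such $j$, this immediately yields, independently of $n$,
\[
|f|^2-1\leq\lambda_{\min}(C_s C_s^T)\leq\lambda_{\max}(C_s C_s^T)\leq|f|^2+3.
\]
Thus the whole proposition reduces to computing or estimating $|f|^2=1+\sum_{i=0}^{N-1}\bigl(\int_0^1 p_i(\sigma)\,{\rm d}\sigma\bigr)^2$ for each of the four bases; the claimed constants ($1,5$; $2N,2N+6$; $4+2\ln 2$; $N^{-1},5$) simply arise from shifting the bounds $|f|^2-1,\,|f|^2+3$ by the size of $|f|^2$.

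For the Legendre basis, the orthogonality $\int_{-1}^{1} P_i(s)\,{\rm d}s=2\delta_{i0}$ gives $\int_0^1 p_i=\delta_{i0}$, so $f=[1,1,0,\ldots,0]$ and $|f|^2=2$. For the modified Legendre basis, $p_i=\bar p_{i+1}'$ together with $\bar p_{i+1}(0)=0$ and $\bar p_{i+1}(1)=1-(-1)^{i+1}$ gives $\int_0^1 p_i = 1-(-1)^{i+1}$, namely $2$ for even $i$ and $0$ for odd $i$. Counting the $\lceil N/2\rceil$ even indices in $\{0,\ldots,N-1\}$ produces $|f|^2=1+4\lceil N/2\rceil\in[2N+1,2N+3]$, which yields $\lambda_{\min}\geq 2N$ and $\lambda_{\max}\leq 2N+6$. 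For the Runge--Kutta basis, set $w_i=\int_0^1 p_i$; since the Lagrange polynomials reproduce constants, $\sum_{i=0}^{N-1} p_i\equiv 1$ and therefore $\sum w_i=1$. Under the positivity assumption, Cauchy--Schwarz gives $\sum w_i^2\geq(\sum w_i)^2/N=1/N$, while $\sum w_i^2\leq(\max_i w_i)(\sum w_i)\leq 1$, so $|f|^2\in[1+1/N,\,2]$.

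The Chebyshev case is the main obstacle. The substitution $2\sigma-1=\cos\theta$ turns $\int_0^1 T_i(2\sigma-1)\,{\rm d}\sigma$ into $\tfrac12\int_0^\pi\cos(i\theta)\sin\theta\,{\rm d}\theta$, which vanishes for odd $i\geq 1$ and equals $-1/(i^2-1)$ for even $i\geq 2$. Consequently $|f|^2=2+\sum_{k\geq 1,\,2k\leq N-1}1/(4k^2-1)^2$, and what remains is a clean upper bound on this tail that yields $|f|^2+3\leq 4+2\ln 2$. The appearance of $\ln 2$ strongly suggests using the partial-fraction identity $1/(4k^2-1)=\tfrac12\bigl(1/(2k-1)-1/(2k+1)\bigr)$ together with the alternating series $\sum_{k\geq 1}\bigl(1/(2k-1)-1/(2k)\bigr)=\ln 2$ to dominate the squared sum. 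This is the only computational step of real substance; once $|f|^2$ is in hand the proposition follows from the interval bound above. Finally, these estimates on $C_s C_s^T$ transfer to $\mathcal{C}\mathcal{C}^T$ through the Kronecker-and-permutation factorization (\ref{eq:CCT-1}), which preserves eigenvalues up to the $h^2$ scaling.
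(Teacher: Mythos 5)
Your proposal is correct and follows essentially the same route as the paper: reduce to the tridiagonal Toeplitz eigenvalue formula $\lambda_j=1+\lvert f\rvert^2-2\cos(j\pi/n)$, bound the cosine by $\pm1$ to get $\lvert f\rvert^2-1\le\lambda\le\lvert f\rvert^2+3$, and then compute $\lvert f\rvert^2$ basis by basis, with the same values in every case (including the Cauchy--Schwarz argument for the Runge--Kutta weights). The only step you leave as a sketch is the Chebyshev tail bound $\sum_{k\ge1}(4k^2-1)^{-2}\le 2\ln 2-1$, and your suggested partial-fraction/alternating-series route is precisely the paper's device (dominate by $\frac{1}{k(4k^2-1)}$, whose sum is $2\ln 2-1$), so nothing essential is missing.
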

\begin{proof}
In the case of the Legendre basis, it holds $f=[1,1,0,\ldots,0]$.
Hence, $\lvert f\rvert^{2}=2$ such that the statement follows.

For the modified Legendre basis, we have $f=[1,2,0,2,0,\ldots]$ such
that
\[
\lvert f\rvert^{2}=\begin{cases}
2N+1, & N\text{ even},\\
2N+3, & N\text{ odd}.
\end{cases}
\]

For the Chebyshev basis, we observe 
\[
\int_{0}^{1}p_{i}(\sigma)\dt\sigma=\begin{cases}
\frac{1}{2}\frac{1+(-1)^{i}}{1-i^{2}}, & i\neq1,\\
0, & i=1.
\end{cases}
\]
This leads to $f=[1,1,0,-\frac{1}{3},0,-\frac{1}{8},0,\ldots]$. Hence,
\[
2\leq\lvert f\rvert^{2}\leq2+\sum_{i=1}^{\infty}\left(\frac{1}{1-(2i)^{2}}\right)^{2}\leq2+\sum_{i=1}^{\infty}\frac{1}{i(4i^{2}-1)}=2+2\ln2-1.
\]
For the sum of the series, cf. \cite[p 269, series 110.d]{Knopp54}.
This provides the estimate for the Chebyshev basis.

In case of the Runge-Kutta basis it holds $\sum_{i=0}^{N-1}p_{i}(\sigma)\equiv1$.
With $f=[1,f_{2},\ldots,f_{N+1}]$ it holds then $f_{i}\geq0$ and
$\sum_{i=2}^{N+1}f_{i}=1$. Hence,
\[
\frac{1}{N}=\frac{1}{N}\left(\sum_{i=2}^{N+1}f_{i}\right)^{2}\leq\sum_{i=2}^{N+1}f_{i}^{2}\leq\sum_{i=2}^{N+1}f_{i}=1.
\]
This yields $1+N^{-1}\leq\lvert f\rvert^{2}\leq2$ and the claim follows.
\end{proof}
\begin{rem}
For the Runge-Kutta basis, the values $f_{i}=\int_{0}^{1}p_{i-1}(\sigma)\dt\sigma$
are just the weights of the interpolatory quadrature rule corresponding
to the nodes $\tau_{1},\ldots,\tau_{N}$ of (\ref{eq:ChebNodes}).
For a number of common choices of nodes these weights are known to
be positive. Examples are the Gauss-Legendre nodes, Radau nodes, and
Lobatto nodes \cite[Section 2.7]{DavRab84}. It holds also true for
Chebyshev nodes and many others, see e.g. \cite[pp 85f]{DavRab84}.\qed
\end{rem}
\begin{cor}
\label{Cor1}For equidistant grids (\ref{eq:mesh}), it holds
\begin{itemize}
\item For the Legendre basis $\kappa(\mathcal{C})\leq\sqrt{5}$ and $\lVert\mathcal{C}^{+}\rVert\leq h^{-1}$;
\item For the modified Legendre basis $\kappa(\mathcal{C})\leq\left(\frac{2N+6}{2N}\right)^{1/2}$
and $\lVert\mathcal{C}^{+}\rVert\leq (2N)^{-1/2}h^{-1}$;
\item For the Chebyshev basis $\kappa(\mathcal{C})\leq(4+2\ln2)^{1/2}\approx2.32$
and $\lVert\mathcal{C}^{+}\rVert\leq h^{-1}$.
\item For the Runge-Kutta basis $\kappa(\mathcal{C})\leq(5N)^{1/2}$ and
$\lVert\mathcal{C}^{+}\rVert\leq N^{1/2}h^{-1}$ provided that $\int_{0}^{1}p_{i}(\sigma)\dt\sigma\geq0$,
$i=0,1,\ldots,N-1$. 
\end{itemize}
\end{cor}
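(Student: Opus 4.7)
The plan is to reduce everything to the eigenvalues of $C_sC_s^T$, for which Proposition~\ref{propCcond} already supplies the required bounds. Since $\mathcal{C}$ has full row rank, it holds $\lVert\mathcal{C}\rVert=\sigma_{\max}(\mathcal{C})=\lambda_{\max}(\mathcal{C}\mathcal{C}^T)^{1/2}$ and $\lVert\mathcal{C}^+\rVert=\sigma_{\min}(\mathcal{C})^{-1}=\lambda_{\min}(\mathcal{C}\mathcal{C}^T)^{-1/2}$, so only the extremal eigenvalues of $\mathcal{C}\mathcal{C}^T$ are needed.

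First I would use (\ref{eq:CCT-1}) to observe that $\mathcal{C}\mathcal{C}^T=\Pi_1(I_k\otimes\mathcal{C}_s\mathcal{C}_s^T)\Pi_1^T$. Because $\Pi_1$ is an orthogonal permutation and the Kronecker product $I_k\otimes(\cdot)$ just replicates the spectrum with multiplicity $k$, the eigenvalues of $\mathcal{C}\mathcal{C}^T$ coincide (up to multiplicity) with those of $\mathcal{C}_s\mathcal{C}_s^T$. In the equidistant case, $\mathcal{C}_s=hC_s$, so $\mathcal{C}_s\mathcal{C}_s^T=h^2C_sC_s^T$ and consequently
\[
\lambda_{\min}(\mathcal{C}\mathcal{C}^T)=h^2\lambda_{\min}(C_sC_s^T),\qquad \lambda_{\max}(\mathcal{C}\mathcal{C}^T)=h^2\lambda_{\max}(C_sC_s^T).
\]
This immediately yields $\lVert\mathcal{C}^+\rVert=h^{-1}\lambda_{\min}(C_sC_s^T)^{-1/2}$ and $\kappa(\mathcal{C})=\bigl(\lambda_{\max}(C_sC_s^T)/\lambda_{\min}(C_sC_s^T)\bigr)^{1/2}$; note that the explicit dependence on $h$ cancels in the condition number.

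Finally, I would simply insert the four basis-specific eigenvalue bounds from Proposition~\ref{propCcond} into these formulas. For the Legendre basis, $1\le\lambda_{\min}\le\lambda_{\max}\le 5$ gives $\kappa(\mathcal{C})\le\sqrt5$ and $\lVert\mathcal{C}^+\rVert\le h^{-1}$; for the modified Legendre basis, $2N\le\lambda_{\min}\le\lambda_{\max}\le 2N+6$ gives the ratio $\sqrt{(2N+6)/(2N)}$ and the factor $(2N)^{-1/2}h^{-1}$; for the Chebyshev basis, $1\le\lambda_{\min}\le\lambda_{\max}\le 4+2\ln 2$ gives $\kappa(\mathcal{C})\le\sqrt{4+2\ln 2}$ and $\lVert\mathcal{C}^+\rVert\le h^{-1}$; and for the Runge-Kutta basis under the assumption $\int_0^1p_i\ge 0$, $N^{-1}\le\lambda_{\min}\le\lambda_{\max}\le 5$ gives $\kappa(\mathcal{C})\le\sqrt{5N}$ and $\lVert\mathcal{C}^+\rVert\le N^{1/2}h^{-1}$.

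There is no real obstacle here; the only point worth being careful about is the bookkeeping that shows $\Pi_1(I_k\otimes\,\cdot\,)\Pi_1^T$ preserves the spectrum, and the correct power of $h$ that arises from $\mathcal{C}_s=hC_s$. Both are routine once the Kronecker/permutation structure from (\ref{eq:CCT-1})--(\ref{eq:EV}) is made explicit.
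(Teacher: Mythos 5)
Your proposal is correct and follows exactly the route the paper intends: the identities $\lVert\mathcal{C}^{+}\rVert=\lambda_{\min}(\mathcal{C}\mathcal{C}^{T})^{-1/2}$, the factorization $\mathcal{C}\mathcal{C}^{T}=\Pi_{1}(I_{k}\otimes\mathcal{C}_{\mathrm{s}}\mathcal{C}_{\mathrm{s}}^{T})\Pi_{1}^{T}$ with $\mathcal{C}_{\mathrm{s}}\mathcal{C}_{\mathrm{s}}^{T}=h^{2}C_{s}C_{s}^{T}$ on an equidistant grid, and substitution of the eigenvalue bounds from Proposition~\ref{propCcond} are precisely the ingredients assembled in the text preceding the corollary. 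All four basis-specific computations check out.
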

It should be emphasized again that, if $\ker(\mathcal{C}+\Delta\mathcal{C})\neq\ker\mathcal{C}$,
it cannot be guaranteed that the solution of the perturbed problem
$\mathcal{R}(c+\Delta c)$ belongs to $X_{\pi}$. Instead, it belongs
to $\tilde{X}_{\pi}$, only. Simple projection algorithms of elements
of $\tilde{X}_{\pi}$ onto $X_{\pi}$ can be derived, see Appendix~\ref{sec:Projections}.
In our experiments so far, these projections did not have a better
accuracy than the unprojected numerical solutions.

\section{Some examples}

\subsection{Conditioning of the representation map $\mathcal{R}$\label{subsec:CondRepr}}

For each selection $\{p_{0},\ldots,p_{N-1}\}$ of basis polynomials,
the conditioning of the representation map depends both on the grid
and on $N$. For simplicity, we assume here that an equidistant grid
with stepsize $h$ is used for defining $X_{\pi}$. Besides the bases
introduced before, we will additionally consider the Runge-Kutta basis
with uniform interpolation points as used in our very first paper
on the subject \cite{HMTWW}.

The norms of the representation map and its inverse have been computed
for both settings (mapping into $L^{2}((a,b),\R^{m})$ and $H_{D}^{1}(a,b)$)
and for polynomial degrees $N=3,5,10,20$ and $h=n^{-1}$ where $N=10,20,40,80,160,320$.
These are the first observations:
\begin{itemize}
\item $\sigma_{\min}(\hat{\mathcal{U}})$ is independent of the chosen basis
and independent of $N$ for $h\leq0.1$. However, this is not true
for larger stepsizes, cf Table~\ref{tab:h-large}.
\item For every basis, $\sigma_{\max}(\mathcal{U})\approx\sigma_{\max}(\hat{\mathcal{U}})$
up to a relative error below $10^{-3}$. This coincides with the findings
of Theorem~\ref{thm:asymptotic}.
\end{itemize}
In Tables~\ref{tab:smin}--\ref{tab:kappaU}, we present more detailed
results. From these tables, we can draw the following conclusions:
\begin{itemize}
\item The asymptotic behavior with respect to the stepsize $h$ as indicated
in Theorem~\ref{thm:asymptotic} is clearly visible.
\item For both the Legendre and the Chebyshev bases, $\sigma_{\max}(\mathcal{U})$
and $\sigma_{\max}(\hat{\mathcal{U}})$ do not depend on $N$. This
is reasonable for the Legendre basis if Proposition~\ref{RemSig}
is taken into account.
\item The asymptotics of $\sigma_{\min}(\mathcal{U})$ coincides with the
results of Theorem~\ref{thm:asymptotic} and Proposition~\ref{RemSig}
for the modified Legendre basis.
\item The norm of the representation map behaves similarly for all considered
bases. Not unexpectedly, an exception is the Runge-Kutta basis for
uniform nodes, which has a much larger norm than that for other bases.
When comparing $\sigma_{\min}(\mathcal{U})$ and $\sigma_{\max}(\mathcal{U})$
for different bases, we observe that the difference between the Legendre
basis and the Chebyshev basis on one hand and the modified Legendre
basis on the other hand it seems that they have different scaling
only, but their conditioning (being the product of the norms of the
representation map and its inverse) are similar. A similar property
holds for $\hat{\mathcal{U}}$.
\item The Runge-Kutta basis has surprisingly good properties. However, this
property depends on the representation with respect to an orthogonal
polynomial basis (in the present example, Chebyshev polynomials).
Thus, it is much more expensive to work with than using Legendre or Chebyshev
bases directly.
\end{itemize}
\begin{table}

\caption{\label{tab:smin}$\sigma_{\min}(\hat{\mathcal{U}})$}

\begin{centering}
\begin{tabular}{|c|c|}
\hline 
$n=1/h$ & $\sigma_{\min}(\hat{\mathcal{U}})$\tabularnewline
\hline 
\hline 
10 & 3.16e-2\tabularnewline
\hline 
20 & 1.12e-2\tabularnewline
\hline 
40 & 3.95e-3\tabularnewline
\hline 
80 & 1.40e-3\tabularnewline
\hline 
160 & 4.94e-4\tabularnewline
\hline 
320 & 1.75e-4\tabularnewline
\hline 
\end{tabular}
\par\end{centering}
\end{table}
\begin{table}
\caption{$\sigma_{\min}(\mathcal{\hat{U}})$. \label{tab:h-large}The column
headings denote the Legendre basis (L), the modified Legendre basis
(mL), the Chebyshev basis (Ch), the Runge-Kutta basis (RK), and the
Runge-Kutta basis with uniform nodes (RKu)}

\centering{}%
\begin{tabular}{|c|c|c|c|c|c|}
\hline 
$n=1/h$ & L & mL & Ch & RK & RKu\tabularnewline
\hline 
\hline 
\multicolumn{6}{|c|}{$N=3$}\tabularnewline
\hline 
1 & 4.47e-1 & 8.56e-1 & 5.52e-1 & 4.05e-1 & 5.57e-1\tabularnewline
\hline 
3 & 1.88e-1 & 1.89e-1 & 1.88e-1 & 1.87e-1 & 1.88e-1\tabularnewline
\hline 
5 & 8.88e-2 & 8.88e-2 & 8.88e-2 & 8.88e-2 & 8.88e-2\tabularnewline
\hline 
\multicolumn{6}{|c|}{$N=5$}\tabularnewline
\hline 
1 & 3.33e-1 & 8.56e-1 & 4.31e-1 & 2.54e-1 & 4.12e-1\tabularnewline
\hline 
3 & 1.88e-1 & 1.89e-1 & 1.88e-1 & 1.46e-1 & 1.85e-1\tabularnewline
\hline 
5 & 8.88e-2 & 8.88e-2 & 8.88e-2 & 8.86e-2 & 8.87e-2\tabularnewline
\hline 
\multicolumn{6}{|c|}{$N=10$}\tabularnewline
\hline 
1 & 2.29e-1 & 8.56e-1 & 2.93e-1 & 1.31e-1 & 2.33e-1\tabularnewline
\hline 
3 & 1.32e-1 & 1.89e-1 & 1.69e-1 & 7.57e-2 & 1.35e-1\tabularnewline
\hline 
5 & 8.88e-2 & 8.88e-2 & 8.88e-2 & 5.86e-2 & 8.84e-2\tabularnewline
\hline 
\multicolumn{6}{|c|}{$N=20$}\tabularnewline
\hline 
1 & 1.60e-1 & 8.56e-1 & 2.10e-1 & 6.65e-2 & 1.50e-1\tabularnewline
\hline 
3 & 9.24e-2 & 1.89e-1 & 1.21e-1 & 3.84e-2 & 8.68e-2\tabularnewline
\hline 
5 & 7.16e-2 & 8.88e-2 & 1.21e-1 & 3.84e-2 & 8.68e-2\tabularnewline
\hline 
\end{tabular}
\end{table}

\begin{table}
\caption{$\sigma_{\min}(\mathcal{U})$. The column headings denote the Legendre
basis (L), the modified Legendre basis (mL), the Chebyshev basis (Ch),
the Runge-Kutta basis (RK), and the Runge-Kutta basis with uniform
nodes (RKu)}

\centering{}%
\begin{tabular}{|c|c|c|c|c|c|}
\hline 
$n=1/h$ & L & mL & Ch & RK & RKu\tabularnewline
\hline 
\hline 
\multicolumn{6}{|c|}{$N=3$}\tabularnewline
\hline 
10 & 1.17e-3 & 6.83e-3 & 1.35e-3 & 9.81e-4 & 1.93e-3\tabularnewline
\hline 
20 & 4.12e-4 & 2.42e-3 & 4.78e-4 & 3.47e-4 & 6.84e-4\tabularnewline
\hline 
40 & 1.46e-4 & 8.54e-4 & 1.69e-4 & 1.23e-4 & 2.42e-4\tabularnewline
\hline 
80 & 5.15e-5 & 3.02e-4 & 5.98e-5 & 4.33e-5 & 8.55e-5\tabularnewline
\hline 
160 & 1.82e-5 & 1.07e-4 & 2.11e-5 & 1.53e-5 & 3.02e-5\tabularnewline
\hline 
320 & 6.44e-6 & 3.78e-5 & 7.47e-6 & 5.42e-6 & 1.07e-5\tabularnewline
\hline 
\multicolumn{6}{|c|}{$N=5$}\tabularnewline
\hline 
10 & 4.51e-4 & 4.76e-3 & 5.06e-4 & 2.96e-4 & 1.00e-3\tabularnewline
\hline 
20 & 1.59e-4 & 1.68e-3 & 1.79e-4 & 1.04e-4 & 3.54e-4\tabularnewline
\hline 
40 & 5.63e-5 & 5.95e-4 & 6.32e-5 & 3.69e-5 & 1.25e-4\tabularnewline
\hline 
80 & 1.99e-5 & 2.10e-4 & 2.23e-5 & 1.31e-5 & 4.43e-5\tabularnewline
\hline 
160 & 7.04e-6 & 7.44e-5 & 7.90e-6 & 4.62e-6 & 1.56e-5\tabularnewline
\hline 
320 & 2.49e-6 & 2.63e-5 & 2.79e-6 & 1.63e-6 & 5.53e-6\tabularnewline
\hline 
\multicolumn{6}{|c|}{$N=10$}\tabularnewline
\hline 
10 & 1.08e-4 & 2.73e.3 & 1.10e-4 & 4.94e-5 & 2.35e-4\tabularnewline
\hline 
20 & 3.83e-5 & 9.59e-4 & 3.90e-5 & 1.75e-5 & 8.29e-5\tabularnewline
\hline 
40 & 1.36e-5 & 3.39e-4 & 1.38e-5 & 6.17e-6 & 2.93e-5\tabularnewline
\hline 
80 & 4.79e-6 & 1.20e-4 & 4.88e-6 & 2.18e-6 & 1.04e-5\tabularnewline
\hline 
160 & 1.69e-6 & 4.24e-5 & 1.72e-6 & 7.71e-7 & 3.66e-6\tabularnewline
\hline 
320 & 5.99e-7 & 1.50e-5 & 6.10e-7 & 2.73e-7 & 1.30e-6\tabularnewline
\hline 
\multicolumn{6}{|c|}{$N=20$}\tabularnewline
\hline 
10 & 2.28e-5 & 1.46e-3 & 2.30e-5 & 7.26e-6 & 5.73e-5\tabularnewline
\hline 
20 & 8.06e-6 & 5.16e-4 & 8.12e-6 & 2.57e-6 & 2.03e-5\tabularnewline
\hline 
40 & 2.85e-6 & 1.82e-4 & 2.87e-6 & 9.08e-7 & 7.17e-6\tabularnewline
\hline 
80 & 1.01e-6 & 5.45e-5 & 1.01e-6 & 3.21e-7 & 2.53e-6\tabularnewline
\hline 
160 & 3.56e-7 & 2.28e-5 & 3.59e-7 & 1.13e-7 & 8.96e-7\tabularnewline
\hline 
320 & 1.26e-7 & 8.06e-6 & 1.27e-7 & 4.01e-8 & 3.17e-7\tabularnewline
\hline 
\end{tabular}
\end{table}
\begin{table}
\caption{$\sigma_{\max}(\hat{\mathcal{U}})=\sigma_{\max}(\mathcal{U})$. The
column headings denote the Legendre basis (L), the modified Legendre
basis (mL), the Chebyshev basis (Ch), the Runge-Kutta basis (RK),
and the Runge-Kutta basis with uniform nodes (RKu)}

\centering{}%
\begin{tabular}{|c|c|c|c|c|c|}
\hline 
$n=1/h$ & L & mL & Ch & RK & RKu\tabularnewline
\hline 
\hline 
\multicolumn{6}{|c|}{$N=3$}\tabularnewline
\hline 
10 & 3.16e-1 & 1.57e+0 & 3.41e-1 & 2.19e-1 & 2.64e-1\tabularnewline
\hline 
20 & 2.24e-1 & 1.11e+0 & 2.41e-1 & 1.55e-1 & 1.86e-1\tabularnewline
\hline 
40 & 1.58e-1 & 7.87e-1 & 1.70e-1 & 1.10e-1 & 1.32e-1\tabularnewline
\hline 
80 & 1.12e-1 & 5.56e-1 & 1.20e-1 & 7.74e-2 & 9.32e-2\tabularnewline
\hline 
160 & 7.91e-2 & 3.93e-1 & 8.52e-2 & 5.48e-2 & 6.59e-2\tabularnewline
\hline 
320 & 5.59e-2 & 2.78e-1 & 6.02e-2 & 3.87e-2 & 4.66e-2\tabularnewline
\hline 
\multicolumn{6}{|c|}{$N=5$}\tabularnewline
\hline 
10 & 3.16e-1 & 2.69e+0 & 3.41e-1 & 1.74e-1 & 3.96e-1\tabularnewline
\hline 
20 & 2.24e-1 & 1.90e+1 & 2.41e-1 & 1.23e-1 & 2.80e-1\tabularnewline
\hline 
40 & 1.58e-1 & 1.35e+1 & 1.70e-1 & 8.70e-2 & 1.98e-1\tabularnewline
\hline 
80 & 1.12e-1 & 9.51e-1 & 1.20e-1 & 6.15e-2 & 1.40e-1\tabularnewline
\hline 
160 & 7.91e-2 & 6.73e-1 & 8.52e-2 & 4.35e-2 & 9.90e-2\tabularnewline
\hline 
320 & 5.59e-2 & 4.76e-1 & 6.02e-2 & 3.08e-2 & 7.00e-2\tabularnewline
\hline 
\multicolumn{6}{|c|}{$N=10$}\tabularnewline
\hline 
10 & 3.16e-1 & 6.26e+0 & 3.41e-1 & 1.25e-1 & 3.86e+0\tabularnewline
\hline 
20 & 2.24e-1 & 4.43e+0 & 2.41e-1 & 8.81e-2 & 2.73e+0\tabularnewline
\hline 
40 & 1.58e-1 & 3.13e+0 & 1.70e-1 & 6.23e-2 & 1.93e+0\tabularnewline
\hline 
80 & 1.12e-1 & 2.21e+0 & 1.20e-1 & 4.40e-2 & 1.36e+0\tabularnewline
\hline 
160 & 7.91e-2 & 1.57e+0 & 8.52e-2 & 3.11e-2 & 9.64e-1\tabularnewline
\hline 
320 & 5.59e-2 & 1.11e+0 & 6.02e-2 & 2.20e-2 & 6.82e-1\tabularnewline
\hline 
\multicolumn{6}{|c|}{$N=20$}\tabularnewline
\hline 
10 & 3.16e-1 & 1.60e+1 & 3.41e-1 & 8.85e-2 & 1.47e+3\tabularnewline
\hline 
20 & 2.24e-1 & 1.13e+1 & 2.41e-1 & 6.26e-2 & 1.04e+3\tabularnewline
\hline 
40 & 1.58e-1 & 7.98e+0 & 1.70e-1 & 4.43e-2 & 7.37e+2\tabularnewline
\hline 
80 & 1.12e-1 & 5.64e+0 & 1.20e-1 & 3.13e-2 & 5.21e+2\tabularnewline
\hline 
160 & 7.91e-2 & 3.99e+0 & 8.52e-2 & 2.21e-2 & 3.68e+2\tabularnewline
\hline 
320 & 5.59e-2 & 2.82e+0 & 6.02e-2 & 1.56e-2 & 2.61e+2\tabularnewline
\hline 
\end{tabular}
\end{table}
\begin{table}
\caption{$\kappa(\hat{\mathcal{U}})=\sigma_{\max}(\hat{\mathcal{U}})/\sigma_{\min}(\hat{\mathcal{U}})$.
The column headings denote the Legendre basis (L), the modified Legendre
basis (mL), the Chebyshev basis (Ch), the Runge-Kutta basis (RK),
and the Runge-Kutta basis with uniform nodes (RKu)}

\centering{}%
\begin{tabular}{|c|c|c|c|c|c|}
\hline 
$n=1/h$ & L & mL & Ch & RK & RKu\tabularnewline
\hline 
\hline 
\multicolumn{6}{|c|}{$N=3$}\tabularnewline
\hline 
10 & 1.03e+1 & 4.98e+1 & 1.08e+1 & 6.95e+0 & 8.35e+0\tabularnewline
\hline 
20 & 2.00e+1 & 9.95e+1 & 2.16e+1 & 1.39e+1 & 1.67e+1\tabularnewline
\hline 
40 & 4.00e+1 & 1.99e+2 & 4.31e+1 & 2.77e+1 & 3.34e+1\tabularnewline
\hline 
80 & 8.00e+1 & 3.98e+2 & 8.62e+1 & 5.54e+1 & 6.67e+1\tabularnewline
\hline 
160 & 1.60e+2 & 7.96e+2 & 1.72e+2 & 1.11e+2 & 1.33e+2\tabularnewline
\hline 
320 & 3.20e+2 & 1.59e+3 & 3.45e+2 & 2.22e+2 & 2.67e+2\tabularnewline
\hline 
\multicolumn{6}{|c|}{$N=5$}\tabularnewline
\hline 
10 & 1.00e+1 & 8.53e+1 & 1.08e+1 & 5.52e+0 & 1.25e+1\tabularnewline
\hline 
20 & 2.00e+1 & 1.70e+2 & 2.16e+1 & 1.10e+1 & 2.51e+1\tabularnewline
\hline 
40 & 4.00e+1 & 3.40e+2 & 4.31e+1 & 2.20e+1 & 5.01e+1\tabularnewline
\hline 
80 & 8.00e+1 & 6.81e+2 & 8.62e+1 & 4.40e+1 & 1.00e+2\tabularnewline
\hline 
160 & 1.60e+2 & 1.36e+3 & 1.72e+2 & 8.80e+1 & 2.00e+2\tabularnewline
\hline 
320 & 3.20e+2 & 2.72e+3 & 3.45e+2 & 1.76e+2 & 4.01e+2\tabularnewline
\hline 
\multicolumn{6}{|c|}{$N=10$}\tabularnewline
\hline 
10 & 1.00e+1 & 1.98e+2 & 1.08e+1 & 3.95e+0 & 1.22e+2\tabularnewline
\hline 
20 & 2.00e+1 & 3.96e+2 & 2.16e+1 & 7.88e+0 & 2.44e+2\tabularnewline
\hline 
40 & 4.00e+1 & 7.92e+2 & 4.31e+1 & 1.58e+1 & 4.88e+2\tabularnewline
\hline 
80 & 8.00e+1 & 1.58e+3 & 8.62e+1 & 3.15e+1 & 9.76e+2\tabularnewline
\hline 
160 & 1.60e+2 & 3.17e+3 & 1.72e+2 & 6.30e+1 & 1.95e+3\tabularnewline
\hline 
320 & 3.20e+2 & 6.34e+3 & 3.45e+2 & 1.26e+2 & 3.90e+3\tabularnewline
\hline 
\multicolumn{6}{|c|}{$N=20$}\tabularnewline
\hline 
10 & 1.00e+1 & 5.05e+2 & 1.08e+1 & 4.21e+0 & 4.67e+4\tabularnewline
\hline 
20 & 2.00e+1 & 1.01e+3 & 2.16e+1 & 5.60e+0 & 9.32e+4\tabularnewline
\hline 
40 & 4.00e+1 & 2.02e+3 & 4.31e+1 & 1.12e+1 & 1.86e+5\tabularnewline
\hline 
80 & 8.00e+1 & 4.04e+3 & 8.62e+1 & 2.24e+1 & 3.73e+5\tabularnewline
\hline 
160 & 1.60e+2 & 8.07e+3 & 1.72e+2 & 4.48e+1 & 7.46e+5\tabularnewline
\hline 
320 & 3.20e+2 & 1.61e+4 & 3.45e+2 & 9.00e+1 & 1.49e+6\tabularnewline
\hline 
\end{tabular}
\end{table}
\begin{table}
\caption{$\kappa(\mathcal{U})=\sigma_{\max}(\mathcal{U})/\sigma_{\min}(\mathcal{U})$.
The column headings denote the Legendre basis (L), the modified Legendre
basis (mL), the Chebyshev basis (Ch), the Runge-Kutta basis (RK),
and the Runge-Kutta basis with uniform nodes (RKu)\label{tab:kappaU}}

\centering{}%
\begin{tabular}{|c|c|c|c|c|c|}
\hline 
$n=1/h$ & L & mL & Ch & RK & RKu\tabularnewline
\hline 
\hline 
\multicolumn{6}{|c|}{$N=3$}\tabularnewline
\hline 
10 & 2.71e+2 & 2.30e+2 & 2.52e+2 & 2.23e+2 & 1.36e+2\tabularnewline
\hline 
20 & 5.43e+2 & 4.61e+2 & 5.04e+2 & 4.47e+2 & 2.73e+2\tabularnewline
\hline 
40 & 1.09e+3 & 9.21e+2 & 1.01e+3 & 8.93e+2 & 5.45e+2\tabularnewline
\hline 
80 & 2.17e+3 & 1.84e+3 & 2.01e+3 & 1.79e+3 & 1.09e+3\tabularnewline
\hline 
160 & 4.34e+3 & 3.68e+3 & 4.03e+3 & 3.57e+3 & 2.18e+3\tabularnewline
\hline 
320 & 8.68e+3 & 7.37e+3 & 8.06e+3 & 7.15e+3 & 4.36e+3\tabularnewline
\hline 
\multicolumn{6}{|c|}{$N=5$}\tabularnewline
\hline 
10 & 7.03e+2 & 5.65e+2 & 6.74e+2 & 5.89e+2 & 3.96e+2\tabularnewline
\hline 
20 & 1.40e+3 & 1.13e+3 & 1.35e+3 & 1.18e+3 & 7.91e+2\tabularnewline
\hline 
40 & 2.81e+3 & 2.26e+3 & 2.70e+3 & 2.36e+3 & 1.58e+3\tabularnewline
\hline 
80 & 5.61e+3 & 4.52e+3 & 5.39e+3 & 4.71e+3 & 3.16e+3\tabularnewline
\hline 
160 & 1.12e+4 & 9.05e+3 & 1.08e+4 & 9.42e+3 & 6.33e+3\tabularnewline
\hline 
320 & 2.25e+4 & 1.81e+4 & 2.16e+4 & 1.88e+4 & 1.27e+4\tabularnewline
\hline 
\multicolumn{6}{|c|}{$N=10$}\tabularnewline
\hline 
10 & 2.92e+3 & 2.31e+3 & 3.09e+3 & 2.53e+3 & 1.65e+4\tabularnewline
\hline 
20 & 5.83e+3 & 4.62e+3 & 6.18e+3 & 5.05e+3 & 3.29e+4\tabularnewline
\hline 
40 & 1.17e+4 & 9.23e+3 & 1.24e+4 & 1.01e+4 & 6.58e+4\tabularnewline
\hline 
80 & 2.33e+4 & 1.85e+4 & 2.47e+4 & 2.02e+4 & 1.32e+5\tabularnewline
\hline 
160 & 4.67e+4 & 3.69e+4 & 4.94e+4 & 4.04e+4 & 2.63e+5\tabularnewline
\hline 
320 & 9.33e+4 & 7.39e+4 & 9.88e+4 & 8.07e+4 & 5.26e+5\tabularnewline
\hline 
\multicolumn{6}{|c|}{$N=20$}\tabularnewline
\hline 
10 & 1.39e+4 & 1.09e+4 & 1.48e+4 & 1.22e+4 & 2.57e+7\tabularnewline
\hline 
20 & 2.78e+4 & 2.19e+4 & 2.97e+4 & 2.44e+4 & 5.14e+7\tabularnewline
\hline 
40 & 5.55e+4 & 4.37e+4 & 5.94e+4 & 4.87e+4 & 1.03e+8\tabularnewline
\hline 
80 & 1.11e+5 & 8.74e+4 & 1.19e+5 & 9.75e+4 & 2.06e+8\tabularnewline
\hline 
160 & 2.22e+5 & 1.75e+5 & 2.37e+5 & 1.95e+5 & 4.11e+8\tabularnewline
\hline 
320 & 4.44e+5 & 3.50e+5 & 4.75e+5 & 3.90e+5 & 8.22e+8\tabularnewline
\hline 
\end{tabular}
\end{table}

\subsection{Conditioning of the constrained minimization problems}

In order to provide a first insight into the conditioning of the constrained
minimization problem (\ref{eq:discmin})-(\ref{eq:discconstraint}),
we computed the condition numbers $\kappa_{\mathcal{C}}(\mathcal{A})$
which have a crucial importance for the behavior of the computational
error. Discussions of $\kappa(\mathcal{C})$ and $\lVert\mathcal{C}^{+}\rVert$
have been provided earlier (Proposition~\ref{propCcond} and Corollary~\ref{Cor1}).
The examples below are chosen from our earlier investigations that led to
surprisingly accurate results.

As done before, we use the bases as introduced in Section~\ref{subsec:CondRepr}.
We abandon the use the Runge-Kutta basis with uniform nodes since
this basis has a bad conditioning. We choose $M=N+1$ and the Gauss-Legendre
nodes as collocation points (\ref{eq:nodes}). For this choice, $\Phi_{\pi,M}^{R}=\Phi_{\pi,M}^{I}$
(see (\ref{LR}), (\ref{LI})) and $\kappa_{\mathcal{C}}(\mathcal{A})$
is identical for both choices.
\begin{exam}
The first example is an index-3 DAE without dynamic degrees of freedom.
It has been used before in numerous papers, e.g., \cite{HMTWW,HMT,HM_Part1}.
The problem is given by 
\begin{align*}
x'_{2}(t)+x_{1}(t) & =q_{1}(t),\\
t\eta x'_{2}(t)+x'_{3}(t)+(\eta+1)x_{2}(t) & =q_{2}(t),\\
t\eta x_{2}(t)+x_{3}(t) & =q_{3}(t),\quad t\in[0,1].
\end{align*}
For unique solvability, no boundary or initial conditions are necessary.
We choose the exact solution 
\begin{align*}
x_{\ast,1}(t) & =e^{-t}\sin t,\\
x_{\ast,2}(t) & =e^{-2t}\sin t,\\
x_{\ast,3}(t) & =e^{-t}\cos t
\end{align*}
and adapt the right-hand side $q$ accordingly.
In Table~\ref{tab:ExB-1}, the values of $\kappa_{\mathcal{C}}(\mathcal{A})$
for $\Phi_{\pi,M}^{R}$ and $\Phi_{\pi,M}^{C}$ are provided. It turns
out that the behavior for different functionals is comparable. Therefore,
in the following examples, we present only the values for $\Phi_{\pi,M}^{R}$.\qed
\end{exam}
\begin{table}
\caption{$\kappa_{\mathcal{C}}(\mathcal{A})$ for $\Phi_{\pi,M}^{R}$ and $\Phi_{\pi,M}^{C}$.
Here, L denotes the Legendre basis, mL the modified Legendre basis,
Ch the Chebyshev basis, and RK the Runge-Kutta basis\label{tab:ExB-1}.
The smallest values are set in boldface}

\centering{}%
\begin{turn}{90}
\begin{tabular}{|c|c|c|c|c|c|c|c|c|}
\hline 
$n=1/h$ & \multicolumn{4}{c|}{$\kappa_{\mathcal{C}}(\mathcal{A})$ for $\Phi_{\pi,M}^{R}$ } & \multicolumn{4}{c|}{$\kappa(\mathcal{A})$ for $\Phi_{\pi,M}^{C}$}\tabularnewline
\hline 
 & L & mL & Ch & RK & L & mL & Ch & RK\tabularnewline
\hline 
\multicolumn{9}{|c|}{$N=3$}\tabularnewline
\hline 
10 & 5.77e+4 & 5.76e+4 & 6.22e+4 & \textbf{4.96e+4} & 6.01e+4 & 7.04e+4 & 6.04e+4 & \textbf{4.53e+4}\tabularnewline
\hline 
20 & 2.37e+5 & 2.40e+5 & 2.55e+5 & \textbf{2.03e+5} & 2.47e+5 & 2.93e+5 & 2.48e+5 & \textbf{1.85e+5}\tabularnewline
\hline 
40 & 9.62e+5 & 9.79e+5 & 1.04e+6 & \textbf{8.25e+5} & 1.00e+6 & 1.20e+6 & 1.01e+6 & \textbf{7.52e+5}\tabularnewline
\hline 
80 & 3.88e+6 & 3.96e+6 & 4.18e+6 & \textbf{3.32e+6} & 4.05e+6 & 4.86e+6 & 4.06e+6 & \textbf{3.03e+6}\tabularnewline
\hline 
\multicolumn{9}{|c|}{$N=5$}\tabularnewline
\hline 
10 & 4.41e+5 & \textbf{2.79e+5} & 4.69e+5 & 3.84e+5 & 4.58e+5 & 4.06e+5 & 4.43e+5 & \textbf{3.33e+5}\tabularnewline
\hline 
20 & 1.80e+6 & \textbf{1.23e+6} & 1.91e+6 & 1.56e+6 & 1.86e+6 & 1.68e+6 & 1.80e+6 & \textbf{1.34e+6}\tabularnewline
\hline 
40 & 7.25e+6 & \textbf{5.02e+6} & 7.71e+6 & 6.32e+6 & 7.52e+6 & 6.85e+6 & 7.27e+6 & \textbf{5.39e+6}\tabularnewline
\hline 
80 & 2.92e+7 & \textbf{2.03e+7} & 3.10e+7 & 2.54e+7 & 3.02e+7 & 2.77e+7 & 2.92e+7 & \textbf{2.16e+7}\tabularnewline
\hline 
\multicolumn{9}{|c|}{$N=10$}\tabularnewline
\hline 
10 & 7.16e+6 & \textbf{3.92e+6} & 7.71e+6 & 6.50e+6 & 7.02e+6 & 6.09e+6 & 6.73e+6 & \textbf{5.11e+6}\tabularnewline
\hline 
20 & 2.89e+7 & \textbf{1.59e+7} & 3.11e+7 & 2.64e+7 & 2.84e+7 & 2.46e+7 & 2.71e+7 & \textbf{2.04e+7}\tabularnewline
\hline 
40 & 1.16e+8 & \textbf{6.39e+7} & 1.25e+8 & 1.06e+8 & 1.14e+8 & 9.92e+7 & 1.09e+8 & \textbf{8.17e+7}\tabularnewline
\hline 
80 & 4.67e+8 & \textbf{2.57e+8} & 5.02e+8 & 4.27e+8 & 4.58e+8 & 3.98e+8 & 4.37e+8 & \textbf{3.27e+8}\tabularnewline
\hline 
\multicolumn{9}{|c|}{$N=20$}\tabularnewline
\hline 
10 & 1.34e+8 & \textbf{6.79e+7} & 1.49e+8 & 1.23e+8 & 1.17e+8 & 1.21e+8 & 1.13e+8 & \textbf{8.40e+7}\tabularnewline
\hline 
20 & 5.39e+8 & \textbf{2.25e+8} & 5.99e+8 & 4.98e+8 & 4.71e+8 & 4.89e+8 & 4.56e+8 & \textbf{3.34e+8}\tabularnewline
\hline 
40 & 2.16e+9 & \textbf{1.10e+9} & 2.41e+9 & 2.01e+9 & 1.89e+9 & 1.97e+9 & 1.83e+9 & \textbf{1.34e+9}\tabularnewline
\hline 
80 & 8.67e+9 & \textbf{4.42e+9} & 9.65e+9 & 8.06e+9 & 7.59e+9 & 7.89e+9 & 7.34e+9 & \textbf{5.34e+9}\tabularnewline
\hline 
\end{tabular}
\end{turn}
\end{table}

\begin{exam}
We continue with an example of a Hessenberg index-2 system used previously
in \cite{HMTWW}. Consider the DAE system 
\begin{align*}
x'_{1}(t)+\lambda x_{1}(t)-x_{2}(t)-x_{3}(t) & =q_{1}(t),\\
x'_{2}(t)+(\eta t(1-\eta t)-\eta)x_{1}(t)+\lambda x_{2}(t)-\eta tx_{3}(t) & =q_{2}(t),\\
(1-\eta t)x_{1}(t)+x_{2}(t) & =q_{3}(t),\quad t\in[0,1],
\end{align*}
with the right hand side $q$ chosen in such a way that 
\begin{align*}
x_{1}(t) & =e^{-t}\sin t,\\
x_{2}(t) & =e^{-2t}\sin t,\\
x_{3}(t) & =e^{-t}\cos t,
\end{align*}
is a solution. It has one dynamical degree of freedom. We choose the
special condition 
\[
x_{1}(0)=0.
\]
The results for $\eta=-25$ and $\lambda=-1$ are provided in the
Table~\ref{tab:ExD}.\qed
\end{exam}
\begin{table}

\caption{$\kappa_{\mathcal{C}}(\mathcal{A})$ for $\Phi_{\pi,M}^{R}$. Here,
L denotes the Legendre basis, mL the modified Legendre basis, Ch the
Chebyshev basis, and RK the Runge-Kutta basis\label{tab:ExD}. The
smallest values are set in boldface}

\begin{centering}
\begin{tabular}{|c|c|c|c|c|}
\hline 
$n=1/h$ & \multicolumn{4}{c|}{$\kappa_{\mathcal{C}}(\mathcal{A})$}\tabularnewline
\hline 
 & L & mL & Ch & RK\tabularnewline
\hline 
\multicolumn{5}{|c|}{$N=3$}\tabularnewline
\hline 
10 & 1.95e+5 & 3.42e+5 & 2.00e+5 & \textbf{1.96e+5}\tabularnewline
\hline 
20 & 2.56e+5 & 6.90e+5 & 2.65e+5 & \textbf{2.48e+5}\tabularnewline
\hline 
40 & 4.01e+5 & 1.58e+6 & 4.22e+5 & \textbf{3.50e+5}\tabularnewline
\hline 
80 & 8.17e+5 & 3.82e+6 & 8.73e+5 & \textbf{6.06e+5}\tabularnewline
\hline 
\multicolumn{5}{|c|}{$N=5$}\tabularnewline
\hline 
10 & 6.23e+5 & \textbf{5.25e+5} & 6.13e+5 & 7.05e+5\tabularnewline
\hline 
20 & 8.54e+5 & 1.19e+6 & \textbf{8.48e+5} & 9.32e+5\tabularnewline
\hline 
40 & 1.31e+6 & 2.75e+6 & 1.32e+6 & \textbf{1.26e+6}\tabularnewline
\hline 
80 & 2.36e+6 & 6.61e+6 & 2.41e+6 & \textbf{2.03e+6}\tabularnewline
\hline 
\multicolumn{5}{|c|}{$N=10$}\tabularnewline
\hline 
10 & 3.06e+6 & \textbf{1.33e+6} & 3.02e+6 & 4.45e+6\tabularnewline
\hline 
20 & 4.28e+6 & \textbf{3.02e+6} & 4.19e+6 & 5.98e+6\tabularnewline
\hline 
40 & 6.63e+6 & 6.85e+6 & \textbf{6.55e+6} & 7.78e+6\tabularnewline
\hline 
80 & 1.19e+7 & 1.61e+7 & 1.19e+7 & \textbf{1.14e+7}\tabularnewline
\hline 
\multicolumn{5}{|c|}{$N=20$}\tabularnewline
\hline 
10 & 1.68e+7 & \textbf{4.73e+6} & 1.71e+7 & 3.05e+7\tabularnewline
\hline 
20 & 2.12e+7 & \textbf{1.03e+7} & 2.18e+7 & 3.77e+7\tabularnewline
\hline 
40 & 3.23e+7 & \textbf{2.23e+7} & 3.35e+7 & 4.76e+7\tabularnewline
\hline 
80 & 6.12e+7 & \textbf{4.93e+7} & 6.37e+7 & 6.74e+7\tabularnewline
\hline 
\end{tabular}
\par\end{centering}
\end{table}

\begin{exam}
Our next example is a linearized problem proposed by Campbell\&More
\cite{CampbellMoore95}. It has been used previously in the experiments
in \cite{HMT,HM_Part1,HM_Part2} and others. Let
\[
A(Dx)'(t)+B(t)x(t)=q(t),\quad t\in[0,5],
\]
where 
\begin{align*}
A=\begin{bmatrix}1 & 0 & 0 & 0 & 0 & 0\\
0 & 1 & 0 & 0 & 0 & 0\\
0 & 0 & 1 & 0 & 0 & 0\\
0 & 0 & 0 & 1 & 0 & 0\\
0 & 0 & 0 & 0 & 1 & 0\\
0 & 0 & 0 & 0 & 0 & 1\\
0 & 0 & 0 & 0 & 0 & 0
\end{bmatrix},D=\begin{bmatrix}1 & 0 & 0 & 0 & 0 & 0 & 0\\
0 & 1 & 0 & 0 & 0 & 0 & 0\\
0 & 0 & 1 & 0 & 0 & 0 & 0\\
0 & 0 & 0 & 1 & 0 & 0 & 0\\
0 & 0 & 0 & 0 & 1 & 0 & 0\\
0 & 0 & 0 & 0 & 0 & 1 & 0
\end{bmatrix},
\end{align*}
\begin{align*}
B(t)=\begin{bmatrix}0 & 0 & 0 & -1 & 0 & 0 & 0\\
0 & 0 & 0 & 0 & -1 & 0 & 0\\
0 & 0 & 0 & 0 & 0 & -1 & 0\\
0 & 0 & \sin t & 0 & 1 & -\cos t & -2\rho\cos^{2}t\\
0 & 0 & -\cos t & -1 & 0 & -\sin t & -2\rho\sin t\cos t\\
0 & 0 & 1 & 0 & 0 & 0 & 2\rho\sin t\\
2\rho\cos^{2}t & 2\rho\sin t\cos t & -2\rho\sin t & 0 & 0 & 0 & 0
\end{bmatrix},\quad\rho=5,
\end{align*}
subject to the initial conditions 
\[
x_{2}(0)=1,\quad x_{3}(0)=2,\quad x_{5}(0)=0,\quad x_{6}(0)=0.
\]
This problem has index 3 and dynamical degree of freedom $l_{dyn}=4$.
The right-hand side $q$ has been chosen in such a way that the exact
solution becomes 
\begin{alignat*}{2}
x_{\ast,1} & =\sin t, & x_{\ast,4} & =\cos t,\\
x_{\ast,2} & =\cos t, & x_{\ast,5} & =-\sin t,\\
x_{\ast,3} & =2\cos^{2}t, & x_{\ast,6} & =-2\sin2t,\\
x_{\ast,7} & =-\rho^{-1}\sin t.
\end{alignat*}
The results are shown in Table~\ref{tab:ExCM}. Note that, in the
present example, $h=5/n$ in contrast to all previous computations
where $h=1/n$.\qed
\end{exam}
\begin{table}

\caption{$\kappa_{\mathcal{C}}(\mathcal{A})$ for $\Phi_{\pi,M}^{R}$. Here,
L denotes the Legendre basis, mL the modified Legendre basis, Ch the
Chebyshev basis, and RK the Runge-Kutta basis\label{tab:ExCM}. The
smallest values are set in boldface}

\centering{}%
\begin{tabular}{|c|c|c|c|c|}
\hline 
$n=5/h$ & \multicolumn{4}{c|}{$\kappa_{\mathcal{C}}(\mathcal{A})$}\tabularnewline
\hline 
 & L & mL & Ch & RK\tabularnewline
\hline 
\multicolumn{5}{|c|}{$N=3$}\tabularnewline
\hline 
10 & 4.64e+2 & 1.97e+3 & 5.00e+2 & \textbf{4.12e+2}\tabularnewline
\hline 
20 & 1.43e+3 & 5.06e+3 & 1.54e+3 & \textbf{1.21e+3}\tabularnewline
\hline 
40 & 5.543e+3 & 1.38e+4 & 5.96e+3 & \textbf{4.70e+3}\tabularnewline
\hline 
80 & 2.19e+4 & 4.12e+4 & 2.36e+4 & \textbf{1.86e+4}\tabularnewline
\hline 
\multicolumn{5}{|c|}{$N=5$}\tabularnewline
\hline 
10 & 1.63e+3 & 3.74e+3 & 1.70e+3 & \textbf{1.46e+3}\tabularnewline
\hline 
20 & 6.26e+3 & 1.21e+4 & 6.55e+3 & \textbf{5.60e+3}\tabularnewline
\hline 
40 & 2.48e+4 & 4.63e+4 & 2.60e+4 & \textbf{2.22e+4}\tabularnewline
\hline 
80 & 9.88e+4 & 1.83e+5 & 1.04e+5 & \textbf{8.85e+4}\tabularnewline
\hline 
\multicolumn{5}{|c|}{$N=10$}\tabularnewline
\hline 
10 & 3.27e+4 & 4.69e+4 & 3.58e+4 & \textbf{3.20e+4}\tabularnewline
\hline 
20 & 1.29e+5 & 1.89e+5 & 1.42e+5 & \textbf{1.18e+5}\tabularnewline
\hline 
40 & 5.14e+5 & 7.58e+5 & 5.65e+5 & \textbf{4.69e+5}\tabularnewline
\hline 
80 & 2.06e+6 & 3.03e+6 & 2.26e+6 & \textbf{1.88e+6}\tabularnewline
\hline 
\multicolumn{5}{|c|}{$N=20$}\tabularnewline
\hline 
10 & \textbf{7.30e+5} & 8.64e+5 & 8.16e+5 & 9.69e+5\tabularnewline
\hline 
20 & 2.91e+6 & 3.63e+6 & 3.26e+6 & \textbf{2.71e+6}\tabularnewline
\hline 
40 & 1.17e+7 & 1.50e+7 & 1.31e+7 & \textbf{1.09e+7}\tabularnewline
\hline 
80 & 4.69e+7 & 6.10e+7 & 5.25e+7 & \textbf{4.36e+7}\tabularnewline
\hline 
\end{tabular}
\end{table}

The numerical experiments give rise to the following observations:
\begin{itemize}
\item The condition numbers of the discrete problem have almost the same
size for given polynomial degree $N$ and stepsize $h$.
\item The experiments indicate that the Runge-Kutta basis seems to provide
the lowest condition number for smaller stepsizes. In the case of
higher order ansatz functions and larger stepsizes, the modified Legendre
basis seems to provides smallest condition numbers.
\item In order to obtain a complete picture of the relative merits of the
different bases, in the case discussed in Theorem~\ref{thm:errest},
not only the condition number $\kappa(\mathcal{C})$ of $\mathcal{C}$
but the term $\lVert\mathcal{C}^{+}\rVert\kappa(\mathcal{C})$has
to be taken into account. Corollary~\ref{Cor1} shows that the modified
Legendre basis is well-suited for higher orders $N$.
\item If the perturbed solution $\tilde{c}$ of (\ref{eq:pertmini}) is
projected back onto the nullspace $\ker\mathcal{C}$, e.g., by one
of the methods developed in Appendix~\ref{sec:Projections}, we can
assume that the conditions of Theorem~\ref{Theo:constant-nullspace}
are fulfilled. In this case, $\mathcal{C}$ does not have any influence
on the error estimation.
\end{itemize}

\section{Conclusions}

In this paper, we investigated the conditioning of the discrete problems
arising in the least-squares collocation method for DAEs. In particular,
the solution algorithm has been split into a representation mapping
that connects the coefficients of the basis representation to the
function to be represented, and a linearly equality constrained linear
least-squares problem. A careful investigation of the representation
map allowed for a characterization of errors in the function spaces
by those made in the solution of the discrete problem.

The perturbation estimates for the constrained least-squares problem
have been derived with the application in mind: the approximation
of a DAE. The constraints play an exceptional role. If they are satisfied,
the resulting numerical solution belongs to the solution space $H_{D}^{1}(a,b)$.
If this cannot be guaranteed, the convergence theory for the least-squares
method does not apply. Some of the characterizing quantities could
be estimated analytically for reasonable choices of bases while others
have been estimated numerically in certain examples. We believe that
these considerations contribute to a robust and efficient implementation
of the proposed method, which seems to provide surprisingly accurate
numerical solutions to higher-index DAEs.

\paragraph*{Acknowledgment}

The author wants to thank Roswitha März for many discussions that
led to a great enhancement of the presentation. In particular, her
contributions simplified the proof of Theorem~\ref{th:Rnorm} considerably.

\appendix

\section{Projections\label{sec:Projections}}

In the case of a perturbed kernel $\ker\mathcal{C}$ of $\mathcal{C}$
the (perturbed) discrete solution $x=\mathcal{R}c$ does not necessarily
belong to $X_{\pi}\subset H_{D}^{1}(a,b)$. The derivations in this
section will provide a possibility to project the computed solution
$c$ onto $\ker\mathcal{C}$ such that it becomes the representation
of an element of $X_{\pi}$.

\subsection{Representation of scalar products}

In accordance with the definitions (\ref{eq:L-1}) and (\ref{eq:H}),
we equip $\tilde{X}_{\pi}$ with two scalar products,
\begin{align*}
(x,y)_{L^{2}} & =\sum_{j=1}^{n}\left\{ \sum_{\kappa=1}^{k}\int_{t_{j-1}}^{t_{j}}x_{j\kappa}(t)y_{j\kappa}(t)\dt t+\sum_{\kappa=k+1}^{m}\int_{t_{j-1}}^{t_{j}}x_{j\kappa}(t)y_{j\kappa}(t)\dt t\right\} \\
(x,y)_{H_{D,\pi}^{1}} & =\sum_{j=1}^{n}\left\{ \sum_{\kappa=1}^{k}\int_{t_{j-1}}^{t_{j}}\left(x_{j\kappa}(t)y_{j\kappa}(t)+x'_{j\kappa}(t)y'_{j\kappa}(t)\right)\dt t+\sum_{\kappa=k+1}^{m}\int_{t_{j-1}}^{t_{j}}x_{j\kappa}(t)y_{j\kappa}(t)\dt t\right\} 
\end{align*}
for $x,y\in\tilde{X}_{\pi}$. Let $c=\mathcal{R}^{-1}x$ and $d=\mathcal{R}^{-1}y$.
Define, for $\kappa=1,\ldots,k$,
\begin{align*}
X_{j\kappa} & =\left[\begin{array}{c}
x_{j\kappa}(\bar{\tau}_{j1})\\
\vdots\\
x_{j\kappa}(\bar{\tau}_{j,N+1})
\end{array}\right],\quad X'_{j\kappa}=\left[\begin{array}{c}
x'_{j\kappa}(\bar{\tau}_{j1})\\
\vdots\\
x'_{j\kappa}(\bar{\tau}_{j,N+1})
\end{array}\right],\\
Y_{j\kappa} & =\left[\begin{array}{c}
y_{j\kappa}(\bar{\tau}_{j1})\\
\vdots\\
y_{j\kappa}(\bar{\tau}_{j,N+1})
\end{array}\right],\quad Y'_{j\kappa}=\left[\begin{array}{c}
y'_{j\kappa}(\bar{\tau}_{j1})\\
\vdots\\
y'_{j\kappa}(\bar{\tau}_{j,N+1})
\end{array}\right],
\end{align*}
and, for $\kappa=k+1,\ldots,m$,
\[
X_{j\kappa}=\left[\begin{array}{c}
x_{j\kappa}(\tau_{j1})\\
\vdots\\
x_{j\kappa}(\tau_{jN})
\end{array}\right],\quad X_{j\kappa}=\left[\begin{array}{c}
x_{j\kappa}(\tau_{j1})\\
\vdots\\
x_{j\kappa}(\tau_{jN})
\end{array}\right].
\]
Then we have, for $\kappa=1,\ldots,k$,
\begin{align*}
\int_{t_{j-1}}^{t_{j}}x_{j\kappa}(t)y_{j\kappa}(t)\dt t & =h_{j}\sum_{i=1}^{N+1}\bar{\gamma}_{i}x_{j\kappa}(\bar{\tau}_{ji})y_{j\kappa}(\bar{\tau}_{ji})=h_{j}Y_{j\kappa}^{T}\bar{\Gamma}^{2}X_{j\kappa}\\
 & =h_{j}^{3}d_{j\kappa}^{T}\bar{V}^{T}\bar{\Gamma}^{2}\bar{V}c_{j\kappa}
\end{align*}
and
\begin{align*}
\int_{t_{j-1}}^{t_{j}}\left(x_{j\kappa}(t)y_{j\kappa}(t)+x'_{j\kappa}(t)y'_{j\kappa}(t)\right)\dt t & =h_{j}\sum_{i=1}^{N+1}\bar{\gamma}_{i}\left(x_{j\kappa}(\bar{\tau}_{ji})y_{j\kappa}(\bar{\tau}_{ji})+x_{j\kappa}(\bar{\tau}_{ji})y_{j\kappa}(\bar{\tau}_{ji})\right)\\
 & =h_{j}Y_{j\kappa}^{T}\bar{\Gamma}^{2}X_{j\kappa}+h_{j}(Y'_{j\kappa})^{T}\bar{\Gamma}^{2}X'_{j\kappa}\\
 & =h_{j}^{3}d_{j\kappa}^{T}\bar{V}^{T}\bar{\Gamma}^{2}\bar{V}c_{j\kappa}+h_{j}d_{j\kappa}^{T}\mathring{V}^{T}\bar{\Gamma}^{2}\mathring{V}c_{j\kappa}.
\end{align*}
Similarly, for $\kappa=k+1,\ldots,m$, we have
\[
\int_{t_{j-1}}^{t_{j}}x_{j\kappa}(t)y_{j\kappa}(t)\dt t=h_{j}\sum_{i=1}^{N}\gamma_{i}x_{j\kappa}(\tau_{ji})y_{j\kappa}(\tau_{ji})=h_{j}d_{j\kappa}^{T}V^{T}\Gamma^{2}Vc_{j\kappa}.
\]
Using the matrices $\mathcal{U}$ (\ref{eq:U}) and $\hat{\mathcal{U}}$
(\ref{eq:Uhat}), we arrive at the compact representations
\begin{equation}
(x,y)_{L^{2}}=d^{T}\mathcal{U}^{T}\mathcal{U}c=\langle\mathcal{U}d,\mathcal{U}c\rangle,\quad(x,y)_{H_{D,\pi}^{1}}=d^{T}\hat{\mathcal{U}}^{T}\hat{\mathcal{U}}c=\langle\hat{\mathcal{U}}d,\hat{\mathcal{U}}c\rangle.\label{eq:scapro-1}
\end{equation}

\subsection{The orthogonal projection in $\tilde{X}_{\pi}$}

We are interested in computing the best approximation of a function
$x\in\tilde{X}_{\pi}$ in $X_{\pi}$ both with respect to $\lVert\cdot\rVert_{L^{2}}$
and $\lVert\cdot\rVert_{H_{D,\pi}^{1}}$. 
\begin{prop}
Let $Q_{L^{2}}x$ and $Q_{H_{D,\pi}^{1}}x$ denote the orthogonal
projection of $x$ with respect to $\lVert\cdot\rVert_{L^{2}}$ and
$\lVert\cdot\rVert_{H_{D,\pi}^{1}}$, receptively. Let $c=\mathcal{R}^{-1}x$,
$c_{L^{2}}=\mathcal{R}^{-1}Q_{L^{2}}x$, and $c_{H_{D,\pi}^{1}}=\mathcal{R}^{-1}Q_{H_{D,\pi}^{1}}x$.
Then it holds
\begin{align*}
c_{L^{2}} & =c-(\mathcal{U}^{T}\mathcal{U})^{-1}\mathcal{C}^{T}\left(\mathcal{C}(\mathcal{U}^{T}\mathcal{U})^{-1}\mathcal{C}^{T}\right)^{-1}\mathcal{C}c,\\
c_{H_{D,\pi}^{1}} & =c-(\hat{\mathcal{U}}^{T}\hat{\mathcal{U}})^{-1}\mathcal{C}^{T}\left(\mathcal{C}(\hat{\mathcal{U}}^{T}\hat{\mathcal{U}})^{-1}\mathcal{C}^{T}\right)^{-1}\mathcal{C}c.
\end{align*}
\end{prop}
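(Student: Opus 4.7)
The plan is to pull the projection problem back through the representation map $\mathcal{R}$ into coefficient space, where both the norm and the constraint $\mathcal{R}c\in X_{\pi}$ become concrete algebraic objects, and then apply the standard Lagrange-multiplier solution of a strictly convex quadratic program with linear equality constraints.

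First I would translate the geometry. By the scalar-product identities (\ref{eq:scapro-1}), for any $x,y\in\tilde{X}_{\pi}$ with coefficient vectors $c=\mathcal{R}^{-1}x$ and $d=\mathcal{R}^{-1}y$, one has $(x-y,x-y)_{L^{2}}=\lvert\mathcal{U}(c-d)\rvert^{2}$ and similarly $(x-y,x-y)_{H_{D,\pi}^{1}}=\lvert\hat{\mathcal{U}}(c-d)\rvert^{2}$. Recall moreover that $y\in X_{\pi}$ if and only if $\mathcal{C}d=0$ (\ref{eq:discconstraint}). Consequently, with $c$ fixed, the minimizer $c_{L^{2}}=\mathcal{R}^{-1}Q_{L^{2}}x$ is characterised by
\begin{equation*}
c_{L^{2}}=\argmin\bigl\{(c-d)^{T}\mathcal{U}^{T}\mathcal{U}(c-d) : d\in\R^{n(mN+k)},\;\mathcal{C}d=0\bigr\},
\end{equation*}
and analogously for $c_{H_{D,\pi}^{1}}$ with $\hat{\mathcal{U}}^{T}\hat{\mathcal{U}}$ in place of $\mathcal{U}^{T}\mathcal{U}$.

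Next I would invoke the classical theory of equality-constrained quadratic programming. By Proposition~\ref{prop:The-singular-values} the matrices $\mathcal{U}^{T}\mathcal{U}$ and $\hat{\mathcal{U}}^{T}\hat{\mathcal{U}}$ are symmetric and positive definite, so the two objective functionals are strictly convex. Since $\mathcal{C}$ has full row rank, the feasible set is a nonempty affine subspace and $\mathcal{C}(\mathcal{U}^{T}\mathcal{U})^{-1}\mathcal{C}^{T}$ is symmetric positive definite (hence invertible), and likewise for $\hat{\mathcal{U}}$. A unique minimizer therefore exists and is determined by the Karush--Kuhn--Tucker conditions. Writing the Lagrangian
\begin{equation*}
\mathcal{L}(d,\lambda)=(c-d)^{T}\mathcal{U}^{T}\mathcal{U}(c-d)-2\lambda^{T}\mathcal{C}d,
\end{equation*}
stationarity gives $\mathcal{U}^{T}\mathcal{U}(d-c)=-\mathcal{C}^{T}\lambda$, so that $d=c-(\mathcal{U}^{T}\mathcal{U})^{-1}\mathcal{C}^{T}\lambda$. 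Substituting into $\mathcal{C}d=0$ yields
\begin{equation*}
\mathcal{C}(\mathcal{U}^{T}\mathcal{U})^{-1}\mathcal{C}^{T}\lambda=\mathcal{C}c,
\end{equation*}
which is uniquely solvable by the invertibility noted above.

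Finally I would read off the closed-form answer $c_{L^{2}}=c-(\mathcal{U}^{T}\mathcal{U})^{-1}\mathcal{C}^{T}\bigl(\mathcal{C}(\mathcal{U}^{T}\mathcal{U})^{-1}\mathcal{C}^{T}\bigr)^{-1}\mathcal{C}c$ and, repeating the argument verbatim with $\hat{\mathcal{U}}$, the analogous formula for $c_{H_{D,\pi}^{1}}$. There is no genuine obstacle here beyond verifying the hypotheses needed for the KKT solution to be well defined and unique; this is precisely where Proposition~\ref{prop:The-singular-values} (positive definiteness of $\mathcal{U}^{T}\mathcal{U}$ and $\hat{\mathcal{U}}^{T}\hat{\mathcal{U}}$) and the full row rank of $\mathcal{C}$ enter decisively. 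Everything else is a direct computation.
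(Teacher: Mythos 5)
Your proof is correct, but it takes a genuinely different route from the paper. You pull the best-approximation problem back to coefficient space via (\ref{eq:scapro-1}) and solve the resulting equality-constrained strictly convex quadratic program by Lagrange multipliers: stationarity gives $d=c-(\mathcal{U}^{T}\mathcal{U})^{-1}\mathcal{C}^{T}\lambda$, the constraint determines $\lambda$ through the invertible matrix $\mathcal{C}(\mathcal{U}^{T}\mathcal{U})^{-1}\mathcal{C}^{T}$, and the formula drops out; since the objective is strictly convex and the feasible set affine, the KKT system is necessary and sufficient, so this genuinely characterizes the minimizer. The paper instead writes $c_{L^{2}}=(I-\mathcal{C}^{\dagger}\mathcal{C})c$ for the generalized inverse $\mathcal{C}^{\dagger}$ associated with the oblique decomposition $\R^{n(mN+k)}=\ker\mathcal{C}\oplus\ker\mathcal{C}^{\perp_{\mathcal{U}}}$ and verifies that the candidate $\mathcal{X}=(\mathcal{U}^{T}\mathcal{U})^{-1}\mathcal{C}^{T}\bigl(\mathcal{C}(\mathcal{U}^{T}\mathcal{U})^{-1}\mathcal{C}^{T}\bigr)^{-1}$ satisfies the four defining conditions of that weighted generalized inverse, citing an external characterization. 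Your argument is more elementary and self-contained (no appeal to the theory of weighted pseudoinverses) and \emph{derives} the formula rather than verifying a guessed one; the paper's argument buys the structural identification of the projector as $I-\mathcal{C}^{\dagger}\mathcal{C}$, parallel to the Euclidean projector $P=I-\mathcal{C}^{+}\mathcal{C}$ used throughout Section 4. One cosmetic remark: with the Lagrangian exactly as you wrote it, stationarity reads $\mathcal{U}^{T}\mathcal{U}(d-c)=+\mathcal{C}^{T}\lambda$; your minus sign amounts to replacing $\lambda$ by $-\lambda$, and since the multiplier is eliminated the final formulas are unaffected.
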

\begin{proof}
Consider $Q_{L^{2}}x$ first. Let $\mathcal{C}^{\dagger}$ denote
the generalized inverse of $\mathcal{C}$ with respect to the decomposition
$\R^{n(mN+k)}=\ker\mathcal{C}\oplus\ker\mathcal{C^{\perp_{\mathcal{U}}}}$
where $\perp_{\mathcal{U}}$ denotes the orthogonal complement with
respect to the scalar product $(\mathcal{U}\cdot,\mathcal{U}\cdot)$
in (\ref{eq:scapro-1}). Then, $c_{L^{2}}=(I_{\R^{n(mN+k)}}-\mathcal{C}^{\dagger}\mathcal{C})c$.
According to \cite[Section 4.5]{NaVo76}, in our setting, $\mathcal{X}=\mathcal{C}^{\dagger}$
is equivalent to the conditions
\begin{align*}
\mathcal{C}\mathcal{X}\mathcal{C} & =\mathcal{C},\\
\mathcal{X}\mathcal{C}\mathcal{X} & =\mathcal{X},\\
(\mathcal{XC})^{T} & =(\mathcal{U}^{T}\mathcal{U})\mathcal{XC}(\mathcal{U}^{T}\mathcal{U})^{-1},\\
(\mathcal{CX})^{T} & =\mathcal{CX}.
\end{align*}
Set $\mathcal{X}=(\mathcal{U}^{T}\mathcal{U})^{-1}\mathcal{C}^{T}\left(\mathcal{C}(\mathcal{U}^{T}\mathcal{U})^{-1}\mathcal{C}^{T}\right)^{-1}$.
The conditions are easily verified:
\begin{align*}
\mathcal{C}\mathcal{X}\mathcal{C} & =\mathcal{C}(\mathcal{U}^{T}\mathcal{U})^{-1}\mathcal{C}^{T}\left(\mathcal{C}(\mathcal{U}^{T}\mathcal{U})^{-1}\mathcal{C}^{T}\right)^{-1}\mathcal{C}=\mathcal{C},\\
\mathcal{X}\mathcal{C}\mathcal{X} & =(\mathcal{U}^{T}\mathcal{U})^{-1}\mathcal{C}^{T}\left(\mathcal{C}(\mathcal{U}^{T}\mathcal{U})^{-1}\mathcal{C}^{T}\right)^{-1}\mathcal{C}(\mathcal{U}^{T}\mathcal{U})^{-1}\mathcal{C}^{T}\left(\mathcal{C}(\mathcal{U}^{T}\mathcal{U})^{-1}\mathcal{C}^{T}\right)^{-1}\\
 & =(\mathcal{U}^{T}\mathcal{U})^{-1}\mathcal{C}^{T}\left(\mathcal{C}(\mathcal{U}^{T}\mathcal{U})^{-1}\mathcal{C}^{T}\right)^{-1}=\mathcal{X},\\
(\mathcal{XC})^{T} & =\left((\mathcal{U}^{T}\mathcal{U})^{-1}\mathcal{C}^{T}\left(\mathcal{C}(\mathcal{U}^{T}\mathcal{U})^{-1}\mathcal{C}^{T}\right)^{-1}\mathcal{C}\right)^{T}\\
 & =\mathcal{C}^{T}\left(\mathcal{C}(\mathcal{U}^{T}\mathcal{U})^{-1}\mathcal{C}^{T}\right)^{-1}\mathcal{C}(\mathcal{U}^{T}\mathcal{U})^{-1}\\
 & =(\mathcal{U}^{T}\mathcal{U})(\mathcal{U}^{T}\mathcal{U})^{-1}\mathcal{C}^{T}\left(\mathcal{C}(\mathcal{U}^{T}\mathcal{U})^{-1}\mathcal{C}^{T}\right)^{-1}\mathcal{C}(\mathcal{U}^{T}\mathcal{U})^{-1}\\
 & =(\mathcal{U}^{T}\mathcal{U})\mathcal{XC}(\mathcal{U}^{T}\mathcal{U})^{-1},\\
(\mathcal{CX})^{T} & =\left(\mathcal{C}(\mathcal{U}^{T}\mathcal{U})^{-1}\mathcal{C}^{T}\left(\mathcal{C}(\mathcal{U}^{T}\mathcal{U})^{-1}\mathcal{C}^{T}\right)^{-1}\right)^{T}=I=\mathcal{CX}.
\end{align*}
Hence, the proposition for $c_{L^{2}}$ follows. The other statement
is verified similarly with $\mathcal{U}$ replaced by $\hat{\mathcal{U}}$.
\end{proof}
\begin{rem}
Assume that we apply the orthogonal projection $Q_{H_{D,\pi}^{1}}$
on $\tilde{x}=\mathcal{R}(c+\Delta c)$ where $c+\Delta c$ is a solution
of the perturbed problem (\ref{eq:pertmini}). Then it holds obviously
$\lVert Q_{H_{D,\pi}^{1}}\tilde{x}-x_{\pi}\rVert_{H_{D,\pi}^{1}}=\lVert Q_{H_{D,\pi}^{1}}(\tilde{x}-x_{\pi})\rVert_{H_{D,\pi}^{1}}\leq\lVert\tilde{x}-x_{\pi}\rVert_{H_{D,\pi}^{1}}$.
It is, however, not guaranteed that this projected approximation is
a better approximation to the exact solution $x_{\ast}$ than $\tilde{x}$.\qed
\end{rem}

\subsection{The orthogonal projection in $\R^{n(mN+k)}$}

The appearance of the term $(\mathcal{U}^{T}\mathcal{U})^{-1}$ in
the representation of the projection in $\tilde{X}_{\pi}$ makes the
evaluation of $\mathcal{C}^{\dagger}$ hard. The orthogonal projection
in $\R^{n(mN+k)}$ has a much simpler representation depending only
on $\mathcal{C}$, which in turn has a very simple structure.

With $\mathcal{C}$ from (\ref{eq:discconstraint}) it holds $\mathcal{C}^{+}=\mathcal{C}^{T}\left(\mathcal{C}\mathcal{C}^{T}\right)^{-1}$
such that the orthogonal projection $\mathcal{Q}_{\pi}$ onto $\ker\mathcal{C}$
is $\mathcal{Q}_{\pi}=I_{\R^{n(mN+k)}}-\mathcal{C}^{+}\mathcal{C}$.
Using (\ref{eq:CCT-1}) we obtain,
\begin{align}
\mathcal{Q}_{\pi} & =I_{\R^{n(mN+k)}}-\mathcal{C}^{+}\mathcal{C}\nonumber \\
 & =I_{\R^{n(mN+k)}}-\Pi_{2}^{T}\left[\begin{array}{c}
I_{k}\otimes\mathcal{C}_{\textrm{s}}^{T}\\
\mathcal{O}_{\textrm{s}}^{T}
\end{array}\right]\Pi_{1}^{T}\Pi_{1}\left(I_{k}\otimes(\mathcal{C_{\textrm{s}}}\mathcal{C}_{\textrm{s}}^{T})^{-1}\right)\Pi_{1}^{T}\Pi_{1}\left[I_{k}\otimes\mathcal{C}_{\textrm{s}}\vert\mathcal{O}_{\textrm{s}}\right]\Pi_{2}\nonumber \\
 & =I_{\R^{n(mN+k)}}-\Pi_{2}^{T}\left[\begin{array}{c}
I_{k}\otimes\mathcal{C}_{\textrm{s}}^{T}\\
\mathcal{O}_{\textrm{s}}^{T}
\end{array}\right]\left(I_{k}\otimes(\mathcal{C_{\textrm{s}}}\mathcal{C}_{\textrm{s}}^{T})^{-1}\right)\left[I_{k}\otimes\mathcal{C}_{\textrm{s}}\vert\mathcal{O}_{\textrm{s}}\right]\Pi_{2}\nonumber \\
 & =I_{\R^{n(mN+k)}}-\Pi_{2}^{T}\left[\begin{array}{c}
(I_{k}\otimes\mathcal{C}_{\textrm{s}}^{T})\left(I_{k}\otimes(\mathcal{C_{\textrm{s}}}\mathcal{C}_{\textrm{s}}^{T})^{-1}\right)\\
\mathcal{O}_{\textrm{s}}^{T}
\end{array}\right]\left[I_{k}\otimes\mathcal{C}_{\textrm{s}}\vert\mathcal{O}_{\textrm{s}}\right]\Pi_{2}\nonumber \\
 & =I_{\R^{n(mN+k)}}-\Pi_{2}^{T}\left[\begin{array}{cc}
I_{k}\otimes\mathcal{C}_{\textrm{s}}^{T}(\mathcal{C_{\textrm{s}}}\mathcal{C}_{\textrm{s}}^{T})^{-1}\mathcal{C}_{\textrm{s}} & \mathcal{O}\\
\mathcal{O}^{T} & \mathcal{O}_{\textrm{s}}^{T}\mathcal{O}_{\textrm{s}}
\end{array}\right]\Pi_{2}\label{eq:Cproj-1}
\end{align}
where $\mathcal{O}$ denotes a zero matrix of the corresponding size.

In order to derive the algorithm for computing the projection of a
vector $c\in\R^{n(mN+k)}$ it is useful to consider the permutations
$\Pi_{1}$ and $\Pi_{2}$. The row permutation $\Pi_{1}$ separates
the equations belonging to the individual components $x_{1},\ldots,x_{k}$
(in that order). The column permutation $\Pi_{2}$ reorders the coefficients
such that the ones describing the differential components are taken
first, and then the ones belonging to the algebraic components. The
representation (\ref{eq:Cproj-1}) shows in particular that the coefficients
describing the algebraic components will not be changed by the projection.
Hence, the projection algorithm can be describes as follows:
\begin{enumerate}
\item For $\kappa=1,\ldots,k$ do:
\begin{enumerate}
\item Determine the coefficients $c^{\kappa}=(c_{1\kappa0},c_{1,\kappa1},\ldots,c_{1\kappa N},c_{2\kappa0},\ldots,c_{n\kappa N})^{T}$.
\item Evaluate $\mathcal{C}_{\textrm{s}}c^{\kappa}$.
\item Solve $(\mathcal{C_{\textrm{s}}}\mathcal{C}_{\textrm{s}}^{T})d^{\kappa}=\mathcal{C}_{\textrm{s}}c^{\kappa}$
with $\mathcal{C}_{s}\mathcal{C}_{s}^{T}$ from (\ref{eq:CsCsT-1}).
\item Set $c^{\kappa}=c^{\kappa}-\mathcal{C}_{\textrm{s}}^{T}d^{\kappa}$.
\item Replace $c^{\kappa}$ in $c$.
\end{enumerate}
\item The coefficients $c^{\kappa}$ for $\kappa=k+1,\ldots,m$ are unchanged.
\end{enumerate}
\begin{rem}
Let $x_{\pi}=\mathcal{R}c$ be the discrete solution of (\ref{eq:PhiM})
and $\tilde{x}=\mathcal{R}(c+\Delta c)$ with $c+\Delta c$ being
the solution of the perturbed problem (\ref{eq:pertmini}). Apply
the projection $\mathcal{Q}_{\pi}$ onto the coefficient vector $c+\Delta c$.
Then it holds, for $\hat{x}=\mathcal{R}\mathcal{Q}_{\pi}(c+\Delta c)\in X_{\pi}$,
\begin{align*}
\lVert\hat{x}-x_{\pi}\rVert_{H_{D,\pi}^{1}} & =\lVert\mathcal{R}\mathcal{Q}_{\pi}(c+\Delta c)-\mathcal{R}c\rVert_{H_{D,\pi}^{1}}=\lVert\mathcal{R}\mathcal{Q}_{\pi}(c+\Delta c)-\mathcal{R}\mathcal{Q}_{\pi}c\rVert_{H_{D,\pi}^{1}}\\
 & \leq\lVert\mathcal{R}\rVert_{\R^{n(mN+k)}\rightarrow H_{D,\pi}^{1}}\lvert\Delta c\rvert=\sigma_{\max}(\hat{\mathcal{U}})\lvert\Delta c\rvert.
\end{align*}
Similarly,
\[
\lVert\hat{x}-x_{\pi}\rVert_{L^{2}((a,b),\R^{m})}\leq\sigma_{\max}(\mathcal{U}).
\]

\qed
\end{rem}
\vspace{0pt}
\begin{rem}
The linear system to be solved in step 1(c) is a simple, low dimensional
triangular system. Moreover, it is independent of $\kappa$. The special
form of $f$ in $\mathcal{C_{\textrm{s}}}\mathcal{C}_{\textrm{s}}^{T}$
shows that it is always diagonally dominant. Proposition~\ref{propCcond}
in Section~\ref{subsec:CondRepr} shows that it is well conditioned
for standard bases.

In the case of a constant stepsize $h_{j}\equiv h$, the triangular
system simplifies considerably because $h$ cancels out in the evaluation
of $\mathcal{C}_{\textrm{s}}^{T}(\mathcal{C_{\textrm{s}}}\mathcal{C}_{\textrm{s}}^{T})^{-1}\mathcal{C}_{\textrm{s}}$.\qed
\end{rem}
\begin{example*}
In this example, we apply the projection $\mathcal{Q}_{\pi}$ to the
function $x_{p}\in\tilde{X}_{\pi}$ given by
\[
x_{p}(t)=\begin{cases}
0, & t\in[t_{j-1},t_{j}),\text{ \ensuremath{j} even},\\
1, & t\in[t_{j-1},t_{j}),\text{ \ensuremath{j} odd}.
\end{cases}
\]
Therefore, the jump of $x_{p}$ at $t_{j}$, $j=1,\ldots,n-1$ equals
1. The maximal jump after projection by using different basis representation
is a small multiple of the rounding unit for all tested cases $N\in\{3,5,10,20\}$ and $n\in\{10,20,40,80,160,320\}$.\qed
\end{example*}

\bibliographystyle{plain}
\bibliography{LSCMImpl}

\end{document}